\patchcmd{\@addmarginpar}{\ifodd\c@page}{\ifodd\c@page\@tempcnta\m@ne}{}{}
\providecommand*{\cupdot}{%
  \mathbin{%
    \mathpalette\@cupdot{}%
  }%
}
\newcommand*{\@cupdot}[2]{%
  \ooalign{%
    $\m@th#1\cup$\cr
    \hidewidth$\m@th#1\cdot$\hidewidth
  }%
}
\providecommand*{\bigcupdot}{%
  \mathop{%
    \vphantom{\bigcup}%
    \mathpalette\@bigcupdot{}%
  }%
}
\newcommand*{\@bigcupdot}[2]{%
  \ooalign{%
    $\m@th#1\bigcup$\cr
    \sbox0{$#1\bigcup$}%
    \dimen@=\ht0 %
    \advance\dimen@ by -\dp0 %
    \sbox0{\scalebox{2}{$\m@th#1\cdot$}}%
    \advance\dimen@ by -\ht0 %
    \dimen@=.5\dimen@
    \hidewidth\raise\dimen@\box0\hidewidth
  }%
}
\newcommand{\cG}{\mathcal{G}}
\newcommand{\cF}{\mathcal{F}}
\newcommand{\cO}{\mathcal{O}}
\newcommand{\bitm}{\begin{itemize}}
\newcommand{\eitm}{\end{itemize}}
\newcommand{\bitme}{\begin{enumerate}[label=(\roman*),leftmargin=0.25in]}
\newcommand{\eitme}{\end{enumerate}}
\newcommand{\beq}{\begin{equation}}
\newcommand{\eeq}{\end{equation}}
\newcommand{\btcb}{\begin{tcolorbox}}
\newcommand{\etcb}{\end{tcolorbox}}
\def\bals#1\eals{\begin{align*} #1 \end{align*}}
\def\bal#1\eal{\begin{align} #1 \end{align}}
\newcommand{\with}{\quad \text{ with }  }
\newcommand{\where}{\quad \text{ where } }
\newcommand{\Ffor}{\quad \text{ for }}
\newcommand{\Aand}{\quad \text{ and } \quad }
\newcommand\Dom\Omega
\newcommand\RR{\mathbb{R}}
\newcommand\TT{\mathbb{T}}
\newcommand\NN{\mathbb{N}}
\newcommand\ZZ{\mathbb{Z}}
\newcommand\Lap\Delta
\newcommand\abs[1]{\left\lvert #1 \right\rvert}
\def\bpde#1\epde{\[\left\{\begin{aligned}#1\end{aligned}\right. \]}
\def\inbpde#1\inepde{\left\{\begin{aligned}#1\end{aligned}\right.}
\def\binpde#1\einpde{\left\{\begin{aligned}#1\end{aligned}\right.}
\newcommand\brkt[2]{\left\langle {#1},{#2} \right\rangle}
\newcommand\Norm[2]{\lVert { #1 } \rVert_{#2}}
\def\cC{\mathcal{C}}
\def\half{\frac{1}{2}}
\def\bfA{\mathbf{A}}
\def\b0{\mathbf{0}}
\def\eps{\varepsilon}
\def\bbmat{\begin{bmatrix}[r]}
\def\ebmat{\end{bmatrix}}
\newcommand{\barr}{\begin{array}}
\newcommand{\ea}{\end{array}}
\newcommand{\bea}{\begin{eqnarray}}
\newcommand{\eea}{\end{eqnarray}}
\newcommand{\bt}{\begin{table}}
\newcommand{\et}{\end{table}}
\DeclareMathOperator\Id{Id}
\DeclareMathOperator\Span{span}
\DeclareMathOperator\supp{supp}
\theoremstyle{plain}
\numberwithin{equation}{section}
\newcommand\tturl[1]{{\tt \scriptsize [\url{{#1}}]}}
\newcommand{\TheTitle}{\textsf{A Range characterization of the single-quadrant ADRT}}
\newcommand{\oI}{I}
\newcommand\bitand{\mathbin{\land}}
\newcommand\bitnot{\mathbin{\neg}}
\newcommand\bitxor{\mathbin{\underline{\lor}}}
\newcommand\bitsum{\mathbin{\varsigma_2}}
\newcommand\bitiprod{\mathbin{*}}
\newcommand\brem{\mathbin{\%}}      
\newcommand\uPhi{\smash{\underline{\Phi}}}
\newcommand\uPsi{\smash{\underline{\Psi}}}
\newcommand\oPhi{\smash{\overline{\Phi}}}
\newcommand\oPsi{\smash{\overline{\Psi}}}
\newcommand\sqbrkt[2]{\left[ {#1},\,{#2} \right]}
\newcommand\sqbrkte[3]{\left[ {#1},\,{#2},\,{#3} \right]}
\newcommand\nm[1]{{#1}^n_{m}}
\newcommand\nmz[2]{{#1}^n_{#2}}
\newcommand\nml[2]{{#1}^n_{m,#2}}
\newcommand\nmlz[3]{{#1}^n_{#2,#3}}
\newcommand\nmlo[1]{{#1}^n_{m,\ell}}
\newcommand\nmo[1]{{#1}^n_m}
\newcommand\nmlq[1]{{#1}^n_{m,\ell,q}}
\newcommand\nmlpqz[3]{{#1}^n_{m,\ell,#2,#3}}
\newcommand\inmzS[1]{(\nmz{S}{#1})^{-1}}
\newcommand\indic{\chi}
\begin{document}

\ifpdf
\DeclareGraphicsExtensions{.pdf, .jpg, .tif}
\else
\DeclareGraphicsExtensions{.eps, .jpg}
\fi

\title{\TheTitle}

\author{%
  Weilin Li\thanks{Courant Institute of Mathematical Sciences, %
  New York University, New York, NY 10012 %
  ({\tt weilinli@cims.nyu.edu})}%
  \and 
  Kui Ren%
  \thanks{Department of Applied Physics and Applied Mathematics, Columbia
  University, New York, NY 10027 ({\tt kr2002@columbia.edu})}
  \and
  Donsub Rim%
  \thanks{Department of Mathematics and Statistics, Washington University in St. Louis, St. Louis, MO 63105 ({\tt rim@wustl.edu})}
}
\maketitle

\begin{abstract} 
This work characterizes the range of the single-quadrant approximate discrete
Radon transform (ADRT) of square images. The characterization follows from a set
of linear constraints on the codomain. We show that for data satisfying these
constraints, the exact and fast inversion formula [Rim, \emph{Appl. Math.
Lett.} \textbf{102} 106159, 2020] yields a square image in a stable manner. The
range characterization is obtained by first showing that the ADRT is a
bijection between images supported on infinite half-strips, then identifying the
linear subspaces that stay finitely supported under the inversion formula. 
\end{abstract}

\begin{keywords}
approximate discrete Radon transform, range characterization, fast algorithms
\end{keywords}

\begin{AMS}
\texttt{44A12}, \texttt{65R10}, \texttt{92C55}, \texttt{68U05}, \texttt{15A04}
\end{AMS}

\section{Introduction}

The Radon transform of functions defined on the Euclidean plane $\RR^2$
is a transform that computes the integral of the function over all straight
lines \cite{Radon1917}. The lines are parametrized by two variables, so the
transformed function is defined on the two dimensional (2D) cylinder $\RR \times
[-\pi, \pi]$. The Radon transform plays a fundamental role in many areas in both
theoretical and applied mathematics.  Perhaps the most widely known application
is in the field of inverse problems, as a mathematical model of medical imaging
techniques like computerized tomography (CT) \cite{Cormack63, Natterer}. We will
refer to the Radon transform as the \emph{continuous} Radon transform, to
distinguish it from its discretizations. 

The range of continuous Radon transform, when it is applied to the class
of smooth compactly supported functions on $\RR^2$, form a strict linear
subspace of the codomain of smooth compactly supported functions on the cylinder
\cite{Helgason99}.  Naturally, the inverse of the transform is well-defined only
on the range, so it is important to be able to verify whether the given data to
be inverted lies in the range. Range characterization refers to a precise
description of this strict subspace. In most applications the data almost never
lies in the range due to measurement noise, and in such situations the range
characterization enables one to derive accurate and efficient methods for
mapping the data back into the range so the inverse can be computed.  These
theoretical and practical issues have been studied carefully in the past, and
we refer the reader to standard texts on the topic for general review
\cite{Dean83, Helgason99, Natterer}.

There are many range characterization results for the continuous Radon
transform and its generalizations. In fact, the singular value decomposition
(SVD) of the transform is known in various settings \cite{Natterer,
Marr1974OnTR, Quinto83, Maass87, Maass92, KazantsevBukhgeim04}.  The
decomposition is a fundamental tool for analyzing and building algorithms in
situations where there are only finite or incomplete data \cite{Louis84,
Louis85, LouisRieder89, Louis96}. In case the function to be transformed is
defined on the torus $\mathbb{T}^n = \mathbb{R}^n / \mathbb{Z}^n$ there are
explicit results given in terms of the Fourier coefficients \cite{Ilmavirta2015,
Railo2020}, and for tensors on the slab $[0, 1] \times \TT^n$ a non-trivial
kernel can be characterized \cite{Ilmavirta2018}. Furthermore, range
characterizations in the Riemannian setting is currently an area of active
research \cite{IlmavirtaMonard2019}. 

To perform practical computations, it is necessary to consider finite
dimensional discretizations of the continuous Radon transform.  Many of these
discretizations were suitably devised for specific applications, but a
general-purpose numerical discretization of the transform has only more recently
attracted careful attention. Brief overviews of these methods appear in
\cite{Press06drt, Averbuch08drt}. These discretizations of the Radon transform
also typically map a discretized 2D function onto a range that forms a strict
subspace of the codomain, as in the continuous case. This work concerns the
range characterization of a particular choice of discretization.

The Approximate Discrete Radon Transform (ADRT) is a fast multi-resolution
algorithm that approximates the continuous Radon transform
\cite{Brady98adrt,Gotz96fdrt}. The ADRT substitutes the integral over straight
lines in the continuous Radon transform with a sum of point-values lying in the
so-called digital lines. The ADRT for a 2D image is assembled from four
identical single-quadrant ADRTs, performed on the properly rotated versions of
the image. 

The single-quadrant ADRT is a mapping from $N \times N$ square images with $N^2$
degrees of freedom to a larger codomain with $3 N^2/ 2 - N/2$ degrees of
freedom. In this work, we study this redundancy in the codomain and characterize
the range of ADRT. We will specify precise constraints on the codomain that
yields the range. We begin by considering the single-quadrant ADRT to be a map
taking the space of images supported on an infinite half-strip into itself and
then showing that it is a bijection in that setting, by the virtue of its exact
inversion formula. Next we show that, for the original image to have been
finitely supported, its ADRT must satisfy a set of linear constraints. We shall
show that there are $N(N-1)/2$ such linearly independent constraints, thereby
prescribing the range that necessarily has $N^2$ degrees of freedom.  The
range characterization presented in this work is the first of its kind for any
of the fast discretizations of the Radon transform, to the best of our
knowledge.

It was the work of Press \cite{Press06drt} that first demonstrated that the ADRT
can be inverted to numerical precision by devising a multi-grid method. Press
also conjectured the existence of an multi-grid variant of complexity $\cO(N^2
(\log N)^2 \log \eps)$ for given error threshold $\eps \in (0,1)$. Subsequently,
the single-quadrant ADRT was found to have an exact and fast inverse of cost
$\cO(N^2 \log N)$ in \cite{Rim20iadrt}. This further revealed the ADRT to have a
four-fold redundancy. The range characterization in this work is a
consequence of this inverse, and it can be potentially useful in developing
new projection techniques or iterative methods for inverting the ADRT.

Various other efficient discretizations of the Radon transform have been
proposed \cite{Beylkin87drt,Kelley93frt,Matus93finitert,
Hsung96periodicrt,Averbuch08drt,Averbuch08ppft,Ilmavirta20}, along with methods
for computing their inverses. What sets ADRT apart is that it has an
inversion formula that is both \emph{fast} and \emph{exact}; the inverse can be
computed in $\cO(N^2 \log N)$ operations, and in the absence of numerical error
in the finite-precision operations, the formula would recover the original image
exactly.  Furthermore, this inverse for the single-quadrant ADRT is derived
neither from the normal equations of the forward transform, nor the Fourier
slice theorem \cite{Natterer}. Rather, it exploits a type of localization
property, that the single-quadrant ADRT of a half-image can be computed from
that of the full-image in $\cO(N^2)$ operations.  Despite enjoying these
properties that the continuous Radon transform does not, the ADRT still
converges to the continuous transform as $N \to \infty$ in the case the square
image was formed by appropriately sampling a Lipschitz continuous image
\cite{Brady98adrt,Gotz96fdrt}.  

The efficiency of both the forward and the inverse ADRT makes it suitable for
use in applications. Besides from its well-known importance in tomography
problems arising in medical imaging, radar imaging, geophysical imaging
\cite{Dean83, Natterer} or electron microscopy \cite{Midgley2003, Frank1996}, it
is also useful in computing generalizations of Lax-Philips representation
\cite{Lax64scattering,Bonneel15slice,Rim18mr,Rim18split}. While the range
characterization is the sole focus here, the characterization will be
useful in these applications as well. 

\section{Digital lines and the ADRT}
We will introduce notations and definitions, and recall the single-quadrant ADRT
and its dual. Most of this section is a review of known
definitions and properties \cite{Brady98adrt, Gotz96fdrt,Press06drt}.

We denote the set of integers as $\ZZ$ and that of non-negative integers as $\NN$. We will make use of a set of indices $\oI_m := \{i \in \NN: i < m\}$ with $m \in \NN$. Let us define the binary operator $\brem: \NN \times \NN \to \NN$ as the remainder $a \brem b := a - \lfloor a/b \rfloor \cdot b$ for $a,b \in \NN.$

We define the linear spaces of images defined on the vertical strip $\ZZ \times I_{2^n}$,
\[
    \begin{aligned}
    \cF^n
    &:= 
    \left\{ 
        f: \ZZ \times \oI_{2^n} \to \RR
    \right\}
    &&
    \text{(images on the vertical strip)},
    \\
    \cF^n_+ 
    &:= 
    \left\{ 
        f \in \cF^n: 
           f(i,\cdot) \equiv 0 \text{ for } i \le b_f,\,
           b_f \in \ZZ
        \right\}
    &&
    \text{(images on the vertical half-strip)},
    \\
    \cF^n_0 
    &:= 
    \left\{ 
        f \in \cF^n: 
        \supp f \subset I_{2^n} \times I_{2^n}
        < \infty 
    \right\}
    &&
    \text{(images on the square)}.
    \end{aligned}
\]
As the naming suggests, $\cF^n$ is the space of images defined on the vertical strip $\ZZ \times I_{2^n}$, $\cF^n_+$ contains images whose support is bounded below, that is, its support is contained in a vertical half-strip, and $\cF^n_0$ is the space of square images, as extended to the vertical strip. Needless to say, $\cF^n_0 \subset \cF^n_+ \subset \cF^n$. 

\newpage

The dot product for $f,g \in \cF^n$ is given by $f\cdot g := \sum_{(i,j) \in \ZZ \times I_{2^n}} f(i,j) g(i,j)$. We will set the grid-size by $N := 2^n$ for some $n \in \NN$. 

A section is a restriction of image $f$ to a vertical strip of width $2^m$.

\begin{definition}\label{def:section}
Given $f \in \cF^n$, $m \in \oI_{n+1}$, $\ell \in \oI_{2^{n-m}}$, we define \emph{$(m,\ell)$-section of $f$} as $\nmlo{f} \in \cF^m$ taking on the values
\beq \label{eq:fnml}
    \nmlo{f}(i,j)
    :=
    f(i,j + \ell \, 2^m),
    \quad
    (i,j) \in \ZZ \times I_{2^m}.
\eeq
\end{definition} 
For each $f \in \cF^n$, the section $f^n_{m,\ell}$ merely accesses the entries that lie on the narrower vertical strip $\{(i,j+\ell 2^m) \,|\, (i,j) \in \ZZ \times I_{2^m} \}$. It is sometimes convenient to reverse the assignment \cref{eq:fnml}, 
\beq \label{eq:section-global}
    f(i,j) = \nml{f}{\lfloor j / 2^m \rfloor} (i, j \brem 2^m ),
    \quad
    (i,j) \in \ZZ \times I_{2^n}.
\eeq

Throughout, we will adopt the convention that whenever the superscript and the
first subscript agree, the subscripts are suppressed. \emph{For example,} $f^n:=
f^n_n := f^n_{n,0}$.

The ADRT approximates the continuous Radon transform by substituting an integral over the straight lines in favor of a sum over the so-called digital lines, which we now define. A digital line is a recursively defined collection of points in $\ZZ \times I_{2^n}$.

\begin{figure}
\centering
    \begin{tabular}{cc}
    \includegraphics[width=0.40\textwidth]{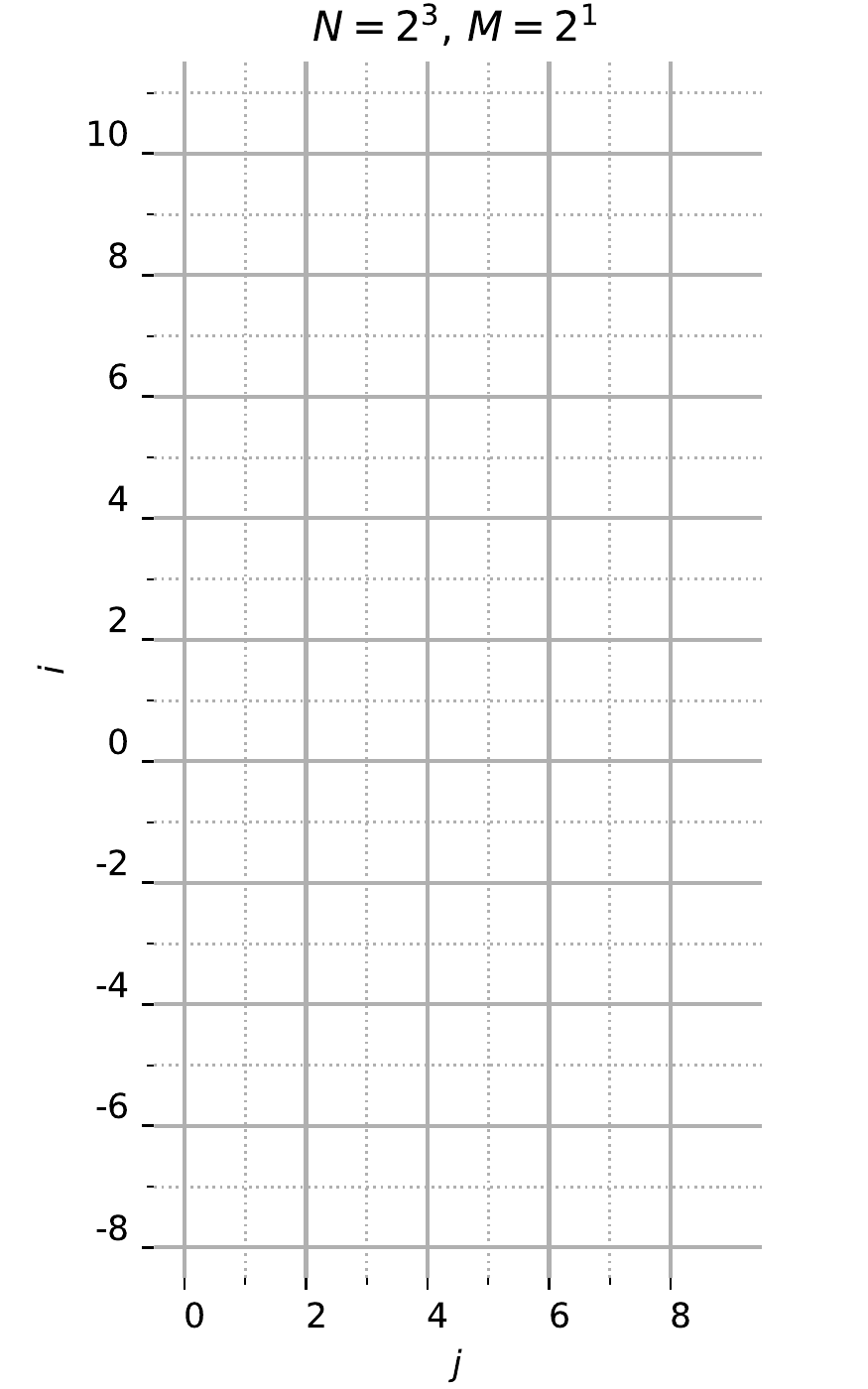}
    &
    \includegraphics[width=0.40\textwidth]{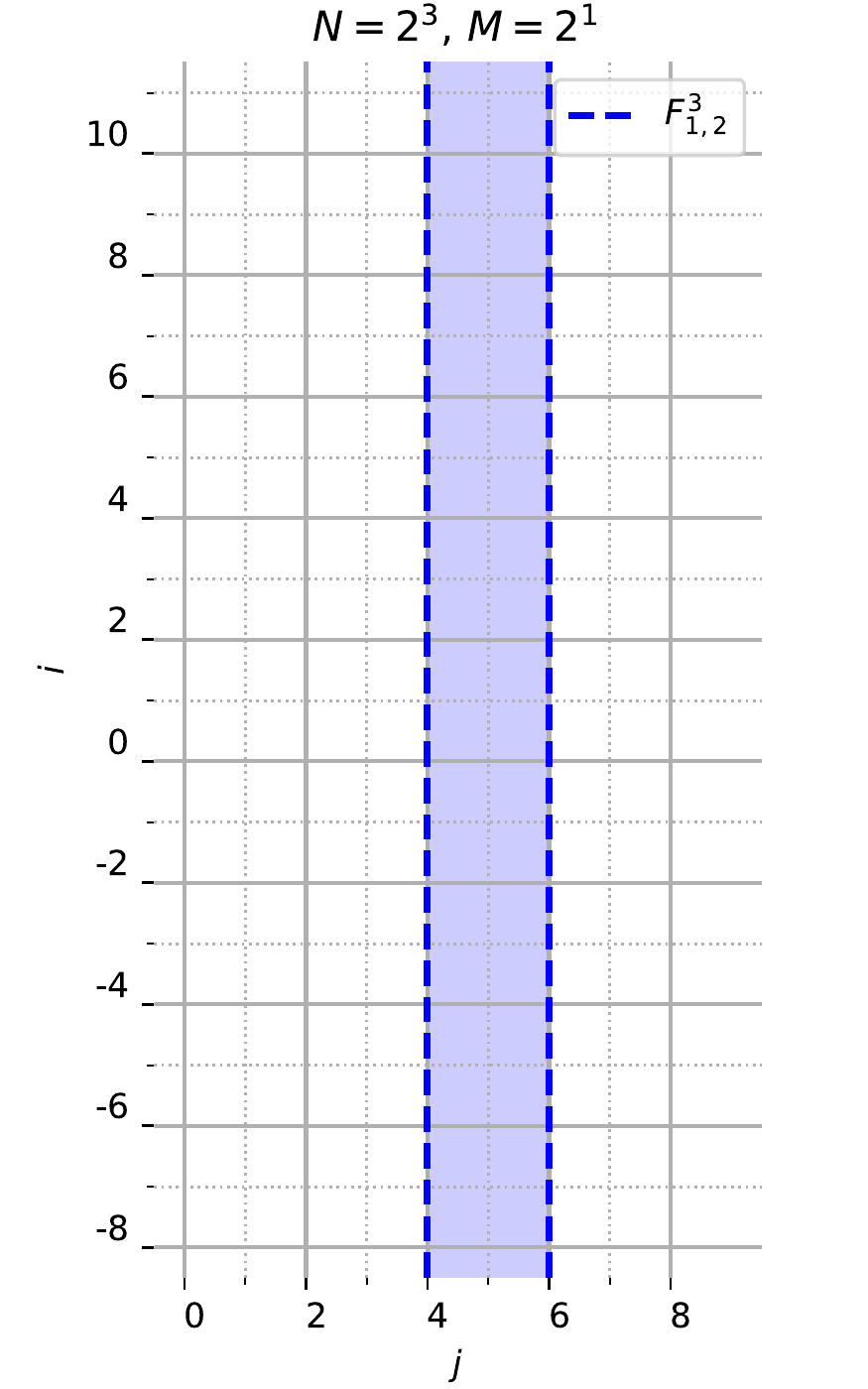}
    \end{tabular}
\caption{A plot of the grid $\ZZ \times \oI_{2^n}$ and its indexing (left), and
a plot highlighting the indices used for the section $f^n_{m,\ell}$
in \cref{def:section} (right) for the case $n=3, m=1, \ell = 2$.}
\end{figure}

\clearpage 

\begin{definition}\label{def:dline}
A \emph{digital line} $D^n_{m,\ell} \brkt{h}{s}$ for $(h,s) \in \ZZ
\times \oI_{2^m}$ is a subset of $\ZZ \times \oI_{2^n}$ that is defined
recursively. Letting $s = 2t$ or $s = 2t+1$,
\beq
    \left\{
    \begin{aligned}
    D^n_{m,\ell} \brkt{h}{2t+1}
      &:= D^n_{m-1, 2\ell} \brkt{h}{t} 
     \cup D^n_{m-1, 2\ell+1} \brkt{h+t+1}{t},\\
    D^n_{m,\ell} \brkt{h}{2t}
      &:= D^n_{m-1, 2\ell} \brkt{h}{t} 
      \cup D^n_{m-1, 2\ell+1} \brkt{h+t}{t},
    \end{aligned}
    \right.
    \label{eq:dline}
\eeq
and the relation is initialized by $D_{0,\ell}^n \brkt{h}{\ell} := \{(h,\ell)\}$ for $h \in \ZZ$, $\ell \in I_{2^n}$.
\end{definition} 
By its definition, the points of the digital line $D^n_{m,\ell}$ are contained in the narrow vertical strip $\ZZ \times \{s + \ell 2^m \,|\, s \in I_{2^m}\}$ of width $2^m$. The digital line can also be defined to take as arguments $(h,s) \in \ZZ \times I_{2^n}$, that is 
\beq
    D^n_m \brkt{h}{s} := D^n_{m, \lfloor s / 2^m \rfloor} \brkt{h}{s \brem 2^m},
    \quad
    (h,s) \in \ZZ \times I_{2^n},
\eeq
in a similar manner as \cref{eq:section-global}.

\begin{figure}
\centering
\def\svgwidth{0.95\textwidth}
{\scriptsize 
\begingroup%
  \makeatletter%
  \providecommand\color[2][]{%
    \errmessage{(Inkscape) Color is used for the text in Inkscape, but the package 'color.sty' is not loaded}%
    \renewcommand\color[2][]{}%
  }%
  \providecommand\transparent[1]{%
    \errmessage{(Inkscape) Transparency is used (non-zero) for the text in Inkscape, but the package 'transparent.sty' is not loaded}%
    \renewcommand\transparent[1]{}%
  }%
  \providecommand\rotatebox[2]{#2}%
  \newcommand*\fsize{\dimexpr\f@size pt\relax}%
  \newcommand*\lineheight[1]{\fontsize{\fsize}{#1\fsize}\selectfont}%
  \ifx\svgwidth\undefined%
    \setlength{\unitlength}{590.54635476bp}%
    \ifx\svgscale\undefined%
      \relax%
    \else%
      \setlength{\unitlength}{\unitlength * \real{\svgscale}}%
    \fi%
  \else%
    \setlength{\unitlength}{\svgwidth}%
  \fi%
  \global\let\svgwidth\undefined%
  \global\let\svgscale\undefined%
  \makeatother%
  \begin{picture}(1,0.52735431)%
    \lineheight{1}%
    \setlength\tabcolsep{0pt}%
    \put(0,0){\includegraphics[width=\unitlength,page=1]{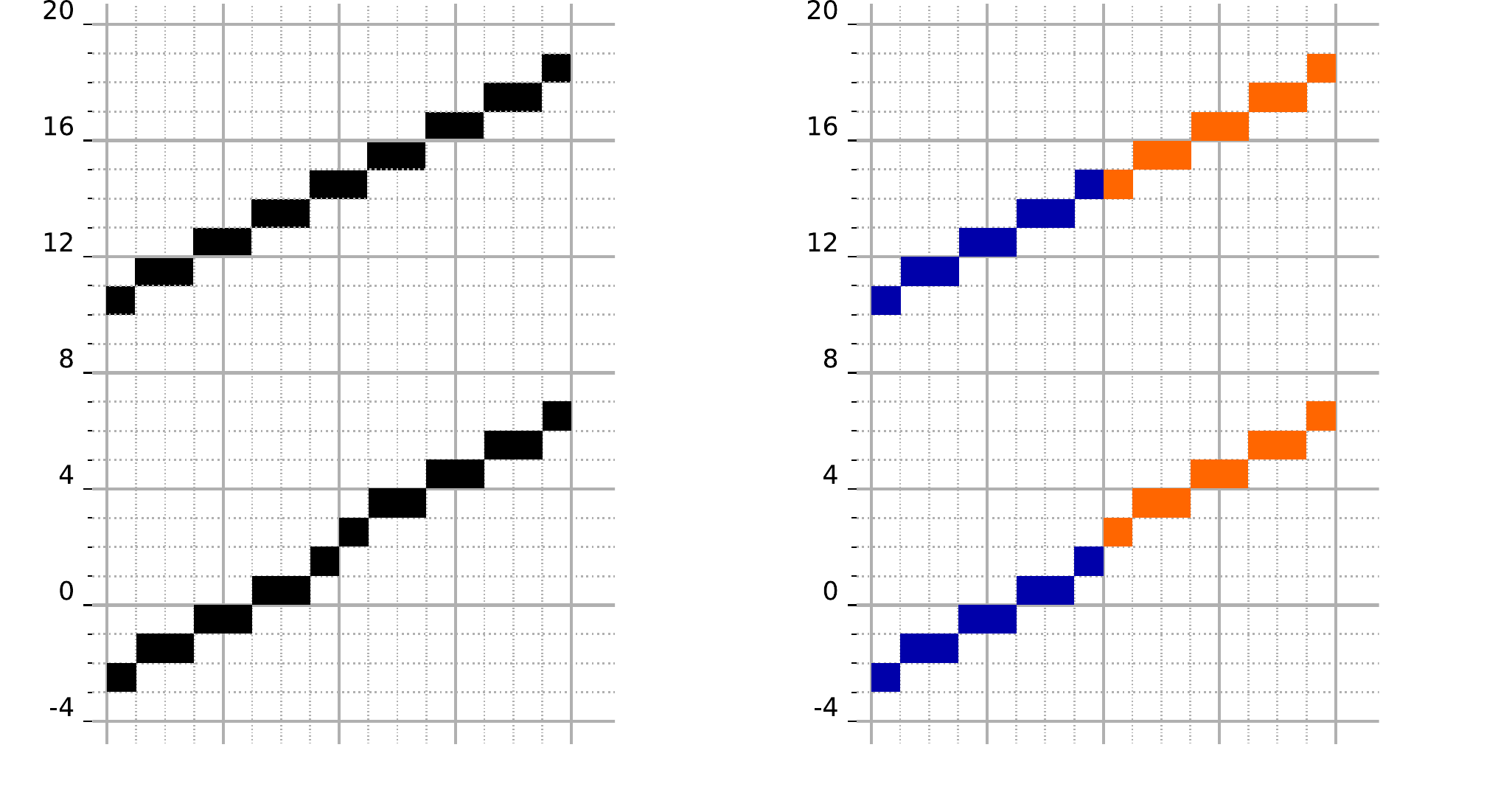}}%
    \put(0.59771805,0.17046433){\color[rgb]{0,0,0.66666667}\makebox(0,0)[lt]{\lineheight{1.25}\smash{\begin{tabular}[t]{l}$D^4_{3,0}\brkt{-3}{4}$\\\end{tabular}}}}%
    \put(0.74641877,0.15076583){\color[rgb]{1,0.4,0}\makebox(0,0)[lt]{\lineheight{1.25}\smash{\begin{tabular}[t]{l}$D^4_{3,1}\brkt{2}{4}$\\\end{tabular}}}}%
    \put(0.59771805,0.43970655){\color[rgb]{0,0,0.66666667}\makebox(0,0)[lt]{\lineheight{1.25}\smash{\begin{tabular}[t]{l}$D^4_{3,0}\brkt{10}{4}$\\\end{tabular}}}}%
    \put(0.74641877,0.37936771){\color[rgb]{1,0.4,0}\makebox(0,0)[lt]{\lineheight{1.25}\smash{\begin{tabular}[t]{l}$D^4_{3,1}\brkt{14}{4}$\\\end{tabular}}}}%
    \put(0.09733395,0.41938638){\color[rgb]{0,0,0}\makebox(0,0)[lt]{\lineheight{1.25}\smash{\begin{tabular}[t]{l}$D^4_{4,0}\brkt{10}{8}$\\\end{tabular}}}}%
    \put(0.09479393,0.18824453){\color[rgb]{0,0,0}\makebox(0,0)[lt]{\lineheight{1.25}\smash{\begin{tabular}[t]{l}$D^4_{4,0}\brkt{-3}{9}$\\\end{tabular}}}}%
    \put(0,0){\includegraphics[width=\unitlength,page=2]{dline-diagram.pdf}}%
  \end{picture}%
\endgroup%
}
\caption{A diagram depicting the recursive definition of a digital line (\cref{def:dline}).}
\end{figure}

We define the single-quadrant ADRT as a summation of point values of $f$ over the digital lines.

\begin{definition} \label{eq:adrt}
Given the image $f \in \cF^n$ and indices $m \in I_{n+1}$, $\ell \in I_{2^{n-m}}$, the \emph{$(m,\ell)$-single-quadrant ADRT} $R^n_{m,\ell}: \cF^n \to \cF^m$ is a linear operator given by
\beq \label{eq:adrt-sum}
    R^n_{m,\ell}[f](h,s) 
    := 
    \sum_{\mathclap{(i,j) \in D^n_{m,\ell} \brkt{h}{s} }} \, f(i,j),
    \quad
    (h,s) \in \ZZ \times I_{2^m}.
\eeq  
We also define the \emph{$m$-single-quadrant ADRT} by $R^n_m: \cF^n \to \cF^n$,
\beq
    R^n_m[f](h,s)
    :=
    \nml{R}{\lfloor s/2^m \rfloor} [f](h, s \brem 2^m)
    =
    \sum_{\mathclap{(i,j) \in D^n_m \brkt{h}{s} }} \, f(i,j),
    \quad
    (h,s) \in \ZZ \times I_{2^n}.
\eeq
In particular, when $m=n$ we call the sum the \emph{single-quadrant ADRT of $f$}, denoted by $R^n[f] := R^n_n[f] =  R^n_{n,0}[f]$.
\end{definition}
The two definitions $\nmlo{R}[f]$ and $\nmo{R}[f]$ are two different ways of indexing an identical set of values: If $g = \nmo{R}[f] \in \cF^n$ then its section $\nmlo{g} = \nmlo{R}[f] \in \cF^m$. Going forward, we will use the indices $(i,j) \in \ZZ \times I_{2^n}$ when referencing the individual values of an image $f$, and indices $(h,s) \in \ZZ \times I_{2^n}$ when referencing digital lines $\nmlo{D}\brkt{h}{s}$ or the ADRT $\nmlo{R}[f](h,s)$. The notation $(h,s)$ is due to Press \cite{Press06drt}; $h$ stands for height and $s$ for slope, respectively.

Observe that for $R^n_m$ the codomain is equal to the domain, both $\cF^n$. This corresponds to how the continuous Radon transform maps the vertical strip $\RR \times [0,1]$ into $\RR \times [0,\pi]$ (see \cite{Natterer}) except that the single-quadrant ADRT only concerns a quarter of the angles $[0,\pi/4]$ rather than $[0,\pi]$.

Due to the recursive definition of the digital lines in \cref{def:dline}, the
summation \cref{eq:adrt-sum} can also be computed recursively. Let us define the
linear operator $S_m: \cF^{m-1} \times \cF^{m-1} \to \cF^{m}$ by
\beq \label{eq:Sm}
    \left\{
    \begin{aligned}
        S_m[f_0,f_1](h, 2t) 
            &= f_0(h,t) + f_1(h+t,t),
            \\
        S_m[f_0,f_1](h, 2t+1) 
            &= f_0(h,t) + f_1(h+t+1,t).
    \end{aligned}
    \right.
\eeq
The same operation can be defined on $\cF^n$: Let us define $S^n_m: \cF^n \to \cF^n$ as given by
\beq
    S^n_m[f] := g,
    \where
    g^n_{m,\ell'} 
    = 
    S_m[f^n_{m-1,2\ell'}, f^n_{m-1,2\ell'+1}]
    \text{ for }
    \ell' \in I_{2^{n-m-1}}.
\eeq
The operator $S^n_m$ merely applies $S_m$ to the pairs of the individual
sections of an image in $\cF^n$, so it is well-defined and linear.  Put another
way, the operation by $S^n_m$ can be broken down into four smaller steps: 
\begin{enumerate}[label=(\roman*)]
    \item Take an image in $f \in \cF^n$ and extract its $(m-1, \ell)$-sections
    $f^n_{m-1, \ell} \in \cF^{m-1}$,
    \item pair the sections $(f^n_{m-1, 2\ell'}, f^n_{m-1, 2\ell' +1}) \in
    \cF^{m-1} \times \cF^{m-1}$ where $\ell' \in I_{2^{n-m-1}}$,
    \item apply $S_m$ to these $2^{n-m-1}$ pairs and obtain images in $\cF^m$, 
    \item create a new image $g \in \cF^n$ by defining its $(m, \ell')$-sections
    $\cF^m$ as the images in $\cF^m$ obtained in the previous step.
\end{enumerate}

Then $R^n_m[f]$ is equal to a sequential application of $S^n_m$
\beq
    R^n_m[f] 
    = 
    S^n_m \circ S^n_{m-1} 
    \circ \cdots \circ 
    S^n_1[f^n_{m,\ell}],
    \quad
    m \in I_{n+1}.
\eeq
If $f \in \cF^n_0$, then a single application of $S^n_m$ has the computational complexity $\cO(N^2)$, thus $R^n_m[f]$ costs $\cO(N^2 \log N)$.

\subsection{Dual digital lines and back-projection}

We will next describe the dual digital lines and the back-projection. An interesting feature of the dual of digital lines satisfy a similar recursive property; they are almost themselves digital lines, save that they have negative slopes.

\begin{definition} \label{def:dualdline}
A \emph{dual digital line} $D'^n_{m,\ell}\brkt{i}{j}$ for $(i,j) \in \ZZ \times I_{2^m}$ is defined as
\beq
    D'^n_{m,\ell} \brkt{i}{j}
    :=
    \left\{ 
       (h,s ) \in \ZZ \times I_{2^n} 
       \,|\,
       (i,j + \ell\,2^m) \cap D^n_m \brkt{h}{s} \ne \emptyset 
    \right\}.
\eeq
\end{definition} 
The dual digital lines $D^n_{m,\ell}$ are defined as the collection of digital lines that pass through the point $(i,j+\ell, 2^m) \in \ZZ \times I_{2^n}$. Since digital lines $D^n_m$ are indexed by $(h,s) \in \ZZ \times I_{2^n}$, this collection of digital lines can be identified to a collection of points in $\ZZ \times I_{2^n}$. In this sense, the dual digital lines $D'^n_{m,\ell}$ also form a collection of these points, like the digital lines. See \cref{fig:ddline} for an example. Furthermore, as was done for $D'^n_{m,\ell}$, we let
\beq
    D'^n_m \brkt{i}{j} 
    := 
    D'^n_{m,\lfloor j / 2^m \rfloor} \brkt{i}{j \brem 2^m},
    \quad
    (i,j) \in \ZZ \times I_{2^n}.
\eeq
Now, just as the sum over the digital lines define the ADRT $R^n_m$, the sum over the dual digital lines define the back-projection of the single-quadrant ADRT.
\begin{definition}
We define the $(m,\ell)$-\emph{back-projection} as $\nmlo{R'}: \cF^n \to \cF^m$ as a linear operator  given by
\beq
    \nmlo{R'} [g](i,j)
    :=
    \sum_{\mathclap{(h,s) \in D^{'n}_{m,\ell}\brkt{i}{j}}} \, g(h,s),
    \quad
    (i,j) \in \ZZ \times I_{2^m},
\eeq
and we will also define the $m$-\emph{back-projection} $R'^n_m: \cF^n \to \cF^n$ by
\beq
    \nmo{R'} [g](i,j)
    :=
    \nml{R'}{\lfloor j / 2^m \rfloor} [g] (h, j \brem 2^m)
    =
    \sum_{\mathclap{(h,s) \in D'^{n}_{m}\brkt{i}{j}}} \, g(h,s),
    \quad
    (i,j) \in \ZZ \times I_{2^n}.
\eeq
\end{definition}

\begin{figure}
\centering
\def\svgwidth{0.9\textwidth}
{\scriptsize 
\begingroup%
  \makeatletter%
  \providecommand\color[2][]{%
    \errmessage{(Inkscape) Color is used for the text in Inkscape, but the package 'color.sty' is not loaded}%
    \renewcommand\color[2][]{}%
  }%
  \providecommand\transparent[1]{%
    \errmessage{(Inkscape) Transparency is used (non-zero) for the text in Inkscape, but the package 'transparent.sty' is not loaded}%
    \renewcommand\transparent[1]{}%
  }%
  \providecommand\rotatebox[2]{#2}%
  \newcommand*\fsize{\dimexpr\f@size pt\relax}%
  \newcommand*\lineheight[1]{\fontsize{\fsize}{#1\fsize}\selectfont}%
  \ifx\svgwidth\undefined%
    \setlength{\unitlength}{714.00004255bp}%
    \ifx\svgscale\undefined%
      \relax%
    \else%
      \setlength{\unitlength}{\unitlength * \real{\svgscale}}%
    \fi%
  \else%
    \setlength{\unitlength}{\svgwidth}%
  \fi%
  \global\let\svgwidth\undefined%
  \global\let\svgscale\undefined%
  \makeatother%
  \begin{picture}(1,0.72605042)%
    \lineheight{1}%
    \setlength\tabcolsep{0pt}%
    \put(0,0){\includegraphics[width=\unitlength,page=1]{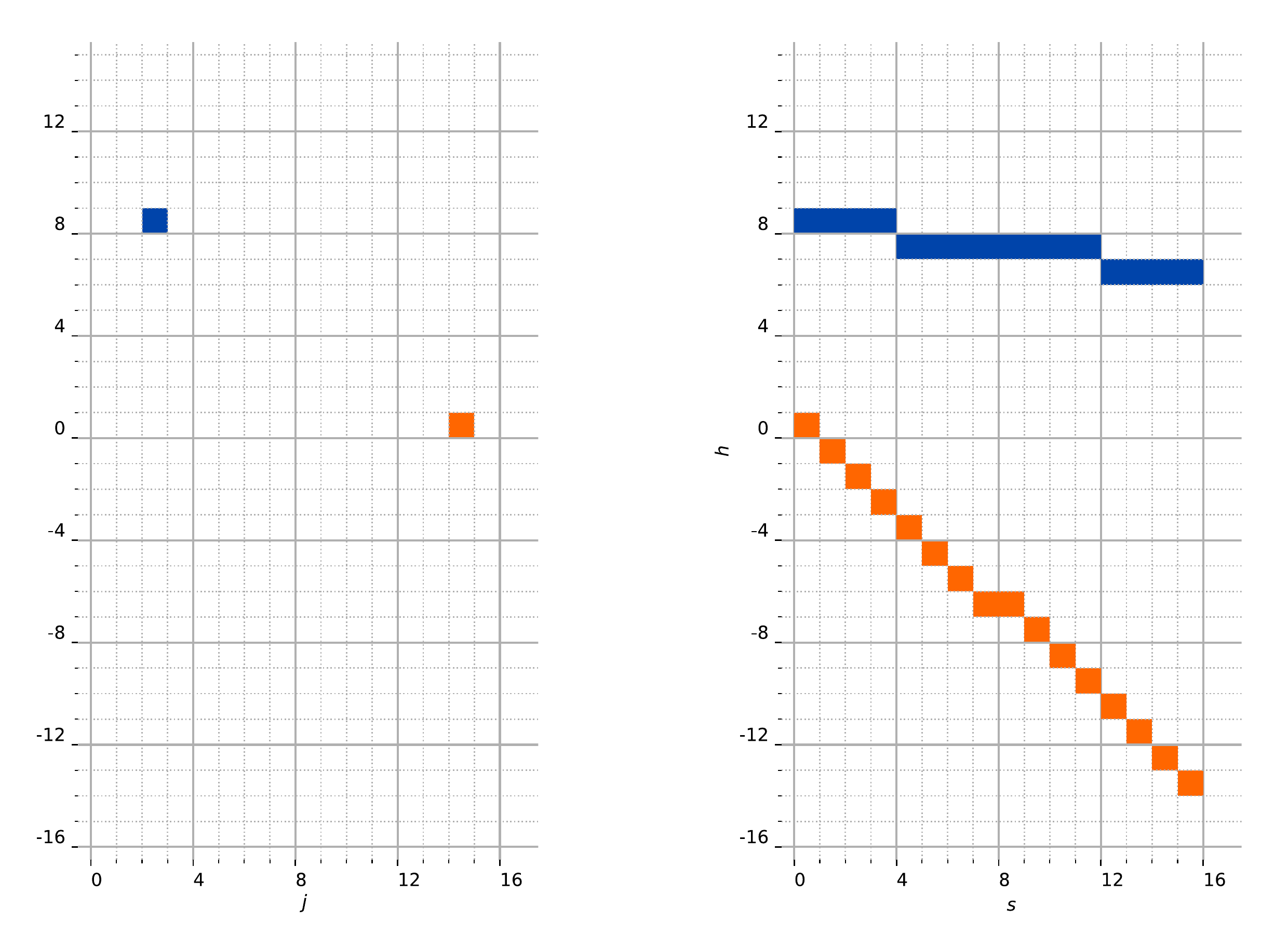}}%
    \put(0.71125689,0.57110839){\color[rgb]{0,0.26666667,0.66666667}\makebox(0,0)[lt]{\lineheight{1.25}\smash{\begin{tabular}[t]{l}$D'^4_{4,0}\brkt{8}{2}$\end{tabular}}}}%
    \put(0,0){\includegraphics[width=\unitlength,page=2]{dual-dline-diagram.pdf}}%
    \put(0.71125689,0.33371343){\color[rgb]{1,0.4,0}\makebox(0,0)[lt]{\lineheight{1.25}\smash{\begin{tabular}[t]{l}$D'^4_{4,0}\brkt{0}{14}$\end{tabular}}}}%
    \put(0.48127258,0.40251253){\color[rgb]{0,0,0}\makebox(0,0)[lt]{\lineheight{1.25}\smash{\begin{tabular}[t]{l}$R^4$\end{tabular}}}}%
    \put(0,0){\includegraphics[width=\unitlength,page=3]{dual-dline-diagram.pdf}}%
  \end{picture}%
\endgroup%
}
\caption{Diagram depicting a dual digital line \cref{def:dline}.}
\label{fig:ddline}
\end{figure}

Let $\delta^m_{i,j}$ denote the Kronecker delta. For $m \in \NN$, $(i,j),(i',j') \in \ZZ \times I_{2^n}$, let $ \delta^m_{i,j} \in \cF^m$ be given by $\delta^m_{i,j}(i',j') =1$ if $(i,j) = (i',j')$ and $\delta^m_{i,j}(i',j') =0$ otherwise.

The digital line $D^n_m$ and its dual $D'^n_m$ can be expressed in terms of $R'^n_m$ and $R^n_m$, respectively.
 
\begin{lemma}
The digital line $D$ and its dual $D'$ are related by 
\beq
    \indic_{D'^{n}_{m} \brkt{i}{j} } = R^n_{m}[\delta^n_{i,j}],
    \quad
    \indic_{D^{n}_{m} \brkt{h}{s} } =  R'^n_{m}[\delta^n_{h,s}].
\eeq
\end{lemma}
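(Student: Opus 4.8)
The plan is to prove both identities by directly unwinding the summation definitions of the ADRT $R^n_m$ and of the back-projection $R'^n_m$, feeding each a Kronecker delta and reading off the result. No recursion on \cref{def:dline} is needed; the only structural fact required is the defining duality between a digital line and its dual, namely that $(h,s) \in D'^n_m\brkt{i}{j}$ if and only if $(i,j) \in D^n_m\brkt{h}{s}$. I would establish this equivalence first, then both identities fall out by evaluation.

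For the first identity, I would fix $(i,j)$ and evaluate at an arbitrary $(h,s) \in \ZZ \times I_{2^n}$. Applying \cref{eq:adrt-sum} in its global form gives
\[
    R^n_m[\delta^n_{i,j}](h,s) = \sum_{(i',j') \in D^n_m\brkt{h}{s}} \delta^n_{i,j}(i',j'),
\]
and since at most one summand is nonzero, the right-hand side equals $1$ exactly when $(i,j) \in D^n_m\brkt{h}{s}$ and $0$ otherwise. Viewed as a function of $(h,s)$, this is precisely the indicator of the set $\{(h,s) : (i,j) \in D^n_m\brkt{h}{s}\}$, which by the duality equivalence is $D'^n_m\brkt{i}{j}$. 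Hence $R^n_m[\delta^n_{i,j}] = \indic_{D'^n_m\brkt{i}{j}}$, as claimed. The second identity follows by the symmetric computation: evaluating the defining sum of the back-projection $R'^n_m$ against $\delta^n_{h,s}$ yields $R'^n_m[\delta^n_{h,s}](i,j) = \indic_{D'^n_m\brkt{i}{j}}(h,s)$, and the same equivalence rewrites the membership condition $(h,s) \in D'^n_m\brkt{i}{j}$ as $(i,j) \in D^n_m\brkt{h}{s}$, giving $R'^n_m[\delta^n_{h,s}] = \indic_{D^n_m\brkt{h}{s}}$.

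The only real obstacle is the index bookkeeping hidden in the duality equivalence. I would verify it by reducing the global dual line to a sectional one via $D'^n_m\brkt{i}{j} = D'^n_{m,\lfloor j/2^m\rfloor}\brkt{i}{j \brem 2^m}$ and observing that $(j \brem 2^m) + \lfloor j/2^m\rfloor\,2^m = j$, so that the point $(i, j + \ell\,2^m)$ appearing in \cref{def:dualdline} is exactly $(i,j)$; the intersection condition there then collapses to the plain membership statement $(i,j) \in D^n_m\brkt{h}{s}$. Once this is checked, both identities are immediate evaluations, and the lemma is really a restatement of the definitions in matrix-transpose form, exhibiting $R'^n_m$ as the adjoint of $R^n_m$ with respect to the dot product on $\cF^n$.
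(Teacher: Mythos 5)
Your proposal is correct and follows essentially the same route as the paper: both arguments rest on the equivalence $(i,j) \in D^n_{m}\brkt{h}{s} \Leftrightarrow (h,s) \in D'^n_{m}\brkt{i}{j}$ (which the paper takes directly from \cref{def:dualdline} without the index bookkeeping you spell out), and then read off both identities by evaluating the defining sums of $R^n_m$ and $R'^n_m$ on Kronecker deltas. The extra verification of the sectional-to-global reduction is a harmless elaboration of what the paper leaves implicit.
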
 

\begin{proof}
Since $(i,j) \in \nml{D}{\ell} \brkt{h}{s}$ if and only if $(h,s) \in \nml{D'}{\ell}\brkt{i}{j}$,
\beq
\nml{R}{\ell} [ \nmlz{\delta}{i}{j} ] (h,s)
= \indic_{\nml{D}{\ell} \brkt{h}{s}}(i,j)
= \indic_{\nml{D'}{\ell} \brkt{i}{j}} (h,s)
= \nml{R'}{\ell}[\nmlz{\delta}{h}{s}](i,j).
\eeq
\end{proof}

Therefore, one can also express the digital line $D^n_{m,\ell}$ and its dual $D'^n_{m,\ell}$ in terms of the back-projection $R'^n_m$ and the single-quadrant ADRT $R^n_m$,
\beq
    D^{n}_{m,\ell} \brkt{h}{s} = \supp R'^n_{m}[\delta^n_{h,s}],
    \quad
    D'^{n}_{m,\ell} \brkt{i}{j} = \supp R^n_{m}[\delta^n_{i,j}].
\eeq
With these observations at hand, it is straightforward to show that $R'^n$ is the transpose of $R^n$.

\begin{lemma}
$R'^n$ is the transpose of $R^n$.
\end{lemma}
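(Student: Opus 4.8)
The plan is to show that $R'^n$ satisfies the defining property of the transpose of $R^n$ with respect to the dot product on $\cF^n$, namely that $R^n[f] \cdot g = f \cdot R'^n[g]$ for every pair $f, g \in \cF^n$ for which both sides are well-defined. Since the dot product is bilinear and both $R^n$ and $R'^n$ are linear, and since every image is a superposition of the unit images $\delta^n_{i,j}$, it is enough to verify the identity on these basis images; to keep every sum finite I would first take $f = \delta^n_{i,j}$ and $g = \delta^n_{h,s}$, and afterward extend to general $f \in \cF^n_0$ and arbitrary $g \in \cF^n$ by linearity, using that the finite support of $f$ forces all the sums appearing in $R^n[f] \cdot g$ and $f \cdot R'^n[g]$ to be finite.

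On the basis images the computation is immediate from the preceding lemma. On one hand,
\[
 R^n[\delta^n_{i,j}] \cdot \delta^n_{h,s}
 = \indic_{D'^n\brkt{i}{j}}(h,s),
\]
since $R^n[\delta^n_{i,j}] = \indic_{D'^n\brkt{i}{j}}$ and pairing against $\delta^n_{h,s}$ merely evaluates at $(h,s)$. On the other hand,
\[
 \delta^n_{i,j} \cdot R'^n[\delta^n_{h,s}]
 = \indic_{D^n\brkt{h}{s}}(i,j),
\]
because $R'^n[\delta^n_{h,s}] = \indic_{D^n\brkt{h}{s}}$ and pairing against $\delta^n_{i,j}$ evaluates at $(i,j)$. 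The two right-hand sides are equal: each equals $1$ exactly when $(i,j)$ lies on the digital line $D^n\brkt{h}{s}$, and by the membership equivalence $(i,j) \in D^n\brkt{h}{s} \Leftrightarrow (h,s) \in D'^n\brkt{i}{j}$ established in the previous lemma this is the same as $(h,s) \in D'^n\brkt{i}{j}$. Hence the two operators have identical matrix entries in the $\delta$-basis, which is precisely the statement that $R'^n = (R^n)^\top$.

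The only real subtlety — and the step I would treat most carefully — is the passage from basis images back to general images, since $\cF^n$ is infinite-dimensional and the dot product is an infinite sum over the unbounded $\ZZ$-coordinate. This turns out to be harmless: each digital line $D^n\brkt{h}{s}$ and each dual digital line $D'^n\brkt{i}{j}$ contains exactly $2^n$ points, so $R^n$ and $R'^n$ act value-by-value through finite sums, and restricting one argument (say $f$) to finite support makes $R^n[f]$ finitely supported and renders $f \cdot R'^n[g]$ a finite sum, so the bilinear extension is legitimate. Equivalently, one could bypass the basis reduction entirely and argue directly by writing $R^n[f]\cdot g = \sum_{(h,s)} \big(\sum_{(i,j)\in D^n\brkt{h}{s}} f(i,j)\big) g(h,s)$ and interchanging the two summations, the interchange being justified by the same finiteness considerations; the membership equivalence then replaces the inner index condition $(i,j) \in D^n\brkt{h}{s}$ by $(h,s) \in D'^n\brkt{i}{j}$ and yields $f \cdot R'^n[g]$ at once.
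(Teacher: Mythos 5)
Your proof is correct and takes essentially the same route as the paper: both verify the transpose identity on the Kronecker delta images, using the preceding lemma's identities $R^n[\delta^n_{i,j}] = \indic_{D'^n\brkt{i}{j}}$ and $R'^n[\delta^n_{h,s}] = \indic_{D^n\brkt{h}{s}}$ together with the membership equivalence $(i,j) \in D^n\brkt{h}{s} \Leftrightarrow (h,s) \in D'^n\brkt{i}{j}$. Your additional care about the bilinear extension to general images and the finiteness of the sums is a sound refinement that the paper leaves implicit.
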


\begin{proof}
From the definitions,
\beq
    \begin{aligned}
    (h,s) \in D'^n \brkt{i}{j}
    &\,\, \Leftrightarrow \,\,
    \delta^n_{h,s} \cdot \indic_{D'^n \brkt{i}{j}} = 1\\
    &\,\, \Leftrightarrow \,\,
    (i,j) \in D^n \brkt{h}{s}
    \,\, \Leftrightarrow \,\,
    \delta^n_{i,j} \cdot \indic_{D^n \brkt{h}{s}} = 1.
    \end{aligned}
\eeq
Then directly,
\beq
    \delta^n_{h,s} \cdot R^n [\delta^n_{i,j}] 
    = 
    \delta^n_{h,s} \cdot \indic_{D'^n \brkt{i}{j} }
    = 
    \indic_{D^n \brkt{h}{s} } \cdot \delta^n_{i,j} 
    = 
    \delta^n_{i,j} \cdot R'^n [\delta^n_{h,s}].
\eeq
\end{proof}
In the same manner in which $R^n_m$ was expressed as a composition of a sequence of $S^n_m$, the transpose $R'^n_m$ also attains an expression as compositions. Define the linear operator $B_m: \cF^{m+1} \to \cF^m \times \cF^m$ which reverses the addition of $S_m$ \cref{eq:Sm}: we sum the terms of $g$ that contribute to $g_0(h,t)$ which are $g(h,2t+1)$ and $g(h,2t)$, and those that contribute to $g_1(h,t)$, which are $g(h-t-1,2t+1)$ and $g(h-t,2t)$. That is,
\beq
    B_m [g]
    :=
    \sqbrkt{g_0}{g_1},
    \quad
    \left\{
\begin{aligned}
    g_0(h,t)
    &:=
    g(h,2t) + g(h, 2t+1),
    \\
    g_1(h,t)
    &:=
    g(h-t,2t) + g(h-t-1, 2t+1).
\end{aligned}\right.
\eeq 
We define an identical operation $B^n_m: \cF^n \to \cF^n$ over larger images by
\beq
    B^n_m[f] := g,
    \quad
    \sqbrkt{g^n_{m-1,2\ell'}}{g^n_{m-1,2\ell'+1}}
    = B_m[f^n_{m,\ell'}],
    \quad
    \ell' \in I_{2^{n-m-1}}.
\eeq 
It follows that $R'^n_{m,\ell}$ is a repeated application of $B^n_m$,
\beq
    R'^n_m[g] = B^n_1 \circ \cdots \circ B^n_{m} [g],
    \Ffor
    m \in I_{n+1}.
\eeq
Naturally, the dual digital line $D'$ satisfies a recursive relation that is very similar to that of $D$.

\begin{corollary}
A \emph{dual digital line} $D'^n_{m,\ell} \brkt{h}{s}$ for $(h,s) \in \ZZ \times \oI_{2^m}$ is a subset of $\ZZ \times \oI_{2^n}$ that satisfies
\beq
    \left\{
    \begin{aligned}
    D'^n_{m,\ell} \brkt{h}{2t}
      &:= D'^n_{m-1, 2\ell} \brkt{h}{t}
      \cup D'^n_{m-1, 2\ell+1} \brkt{h-t}{t},
      \\
    D'^n_{m,\ell} \brkt{h}{2t+1}
      &:= D'^n_{m-1, 2\ell} \brkt{h}{t}
      \cup D'^n_{m-1, 2\ell+1} \brkt{h-t-1}{t},
    \end{aligned}
    \right.
    \label{eq:ddline}
\eeq
and the relation is initialized by $D_{0,\ell}^n \brkt{h}{\ell} := \{(h,\ell)\},$ for $h \in \ZZ$, $\ell \in I_{2^n}$.
\end{corollary}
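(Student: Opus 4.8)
The plan is to reduce the claimed recursion to the already-established recursion for the primal digital line (\cref{def:dline}) by unfolding \cref{def:dualdline}, and then to match the height shifts by an induction on $m$. Throughout I would work in the global indexing, in which $(a,b)\in D'^n_{m,\ell}\brkt{h}{s}$ means, by definition, $(h,s+\ell 2^m)\in D^n_m\brkt{a}{b}$, using the symmetry $(a,b)\in D'^n_m\brkt{i}{j}\Leftrightarrow (i,j)\in D^n_m\brkt{a}{b}$ that is immediate from \cref{def:dualdline}. Fixing the point $P:=(h,2t+\ell 2^m)$, I would first observe that only lines lying in the same width-$2^m$ strip as $P$ can pass through it; hence $(a,b)\in D'^n_{m,\ell}\brkt{h}{2t}$ forces $\lfloor b/2^m\rfloor=\ell$, and writing $\beta:=b\brem 2^m\in\oI_{2^m}$ the condition becomes $P\in D^n_{m,\ell}\brkt{a}{\beta}$.

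Next I would split the slope index by its leading bit. The two terms on the right-hand side of the claim correspond exactly to $\beta\in\oI_{2^{m-1}}$ and to $\beta-2^{m-1}\in\oI_{2^{m-1}}$, since $D^n_{m-1}\brkt{a}{\ell 2^m+\beta}=D^n_{m-1,2\ell}\brkt{a}{\beta}$ in the first range and $=D^n_{m-1,2\ell+1}\brkt{a}{\beta-2^{m-1}}$ in the second; this already accounts for the slope bookkeeping and for the offset $\ell 2^m+2^{m-1}+\gamma$ of the second section. What remains is purely a statement about heights. Writing each digital line through its height-offset profile $a\mapsto a+\phi^{(m)}_\beta(c)$ at local column $c$ (well defined by induction directly from \cref{def:dline}), membership $P\in D^n_{m,\ell}\brkt{a}{\beta}$ is equivalent to $a=h-\phi^{(m)}_\beta(2t)$, so the claim collapses to the two offset identities
\[
  \phi^{(m)}_\beta(2t)=\phi^{(m-1)}_\beta(t)\ \ (0\le\beta<2^{m-1}),
  \qquad
  \phi^{(m)}_{2^{m-1}+\gamma}(2t)=t+\phi^{(m-1)}_\gamma(t)\ \ (0\le\gamma<2^{m-1}),
\]
the first producing the $D'^n_{m-1,2\ell}\brkt{h}{t}$ term and the second the shifted $D'^n_{m-1,2\ell+1}\brkt{h-t}{t}$ term, with the extra $+t$ being exactly the $h-t$ translation.

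Both identities I would prove by induction on $m$ from the one-step offset recursion read off \cref{def:dline}, namely $\phi^{(m)}_\beta(c)=\phi^{(m-1)}_{\lfloor\beta/2\rfloor}(c)$ for $c<2^{m-1}$ and $\phi^{(m)}_\beta(c)=\lfloor\beta/2\rfloor+(\beta\brem 2)+\phi^{(m-1)}_{\lfloor\beta/2\rfloor}(c-2^{m-1})$ for $c\ge 2^{m-1}$. The two ranges $2t<2^{m-1}$ and $2t\ge 2^{m-1}$ feed precisely into the two branches of this recursion and reduce each identity to its $(m-1)$-analogue, so the induction closes. The odd case $D'^n_{m,\ell}\brkt{h}{2t+1}$ is handled identically: the only change is that the column $2t+1$ is odd, which adds one extra unit to the second-section height and yields the $h-t-1$ translation.

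The main obstacle is this last step. The primal recursion (\cref{def:dline}) decomposes a line by the \emph{parity} of its slope together with the sub-strip in which a point lies, whereas the dual recursion decomposes the pencil of lines through a fixed point by the \emph{leading bit} of the slope together with a halving of the point's column; these two decompositions are genuinely different. Indeed, one checks that a single-step unfolding of either $R^n_m=S^n_m\circ R^n_{m-1}$ or of its transpose $R'^n_m=R'^n_{m-1}\circ B^n_m$ merely reproduces the primal recursion, so the real content sits in the height-offset identities above. The care needed is in tracking, across the even/odd column and low/high slope cases, that the accumulated offsets agree, and it is the induction on $m$ that makes this bookkeeping close.
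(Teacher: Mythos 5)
Your proposal is correct, and it is genuinely a different route from the paper's, because the paper in effect offers no proof: the corollary is presented as following ``naturally'' from the factorization $R'^n_m = B^n_1 \circ \cdots \circ B^n_m$, i.e.\ as something to be read off the formula for $B_m$, whose height shifts $h-t$, $h-t-1$ mirror those of $S_m$ in \cref{eq:Sm}. Your argument instead unfolds \cref{def:dualdline} into the incidence statement, splits the pencil of lines through $P=(h,2t+\ell 2^m)$ by the leading bit of the local slope, and reduces the claim to the two height-offset identities $\phi^{(m)}_{\beta}(2t)=\phi^{(m-1)}_{\beta}(t)$ for $\beta<2^{m-1}$ and $\phi^{(m)}_{2^{m-1}+\gamma}(2t)=t+\phi^{(m-1)}_{\gamma}(t)$ for $\gamma<2^{m-1}$, proved by induction on $m$ from the one-step recursion read off \cref{def:dline}; I verified that both ranges $2t<2^{m-1}$ and $2t\ge 2^{m-1}$ reduce each identity to its level-$(m-1)$ analogue (the second identity also needs one extra application of the one-step recursion), that the odd-column case only contributes the expected additional unit, and that the base case holds, so the induction closes. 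Your diagnosis of where the real content lies is also accurate and is the most valuable part of the write-up: a single-step unfolding of $R^n_m=S^n_m\circ R^n_{m-1}$, or of $R'^n_m=R'^n_{m-1}\circ B^n_m$, applied to delta images reproduces only the slope-halving recursion of \cref{def:dline}, whereas the corollary encodes a \emph{column-halving} self-similarity of the incidence relation --- equivalently, the symmetry of the height-offset profile in the slope and column arguments --- which is logically independent of \cref{def:dline} and genuinely requires an induction of the kind you give. What the paper's reading buys is brevity and the structural analogy between $S_m$ and $B_m$; what yours buys is an actual verification, and it makes explicit that the seemingly natural transpose argument, taken by itself, only re-derives the recursion one already has.
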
 

One may derive direct formulas for the digital lines and their duals, without the use of a recursion. Perhaps not so efficient to use for computation, they reveal that the two definitions they are identical up to the sign of its increments.

\begin{corollary} 
Let $(s)_2$ denote the binary representation of $s \in \NN$,
$\bitand$ the bit-wise \emph{and},
$\bitnot$ the bitiwise \emph{not},
$\bitiprod$ the bitwise inner product,
and $\overline{(s)}_2$ the bit reversal of $(s)_2$.
\begin{enumerate}[label=(\roman*)]
\item $D^n_m \brkt{h}{s} = \{(k_+(t),t) \in \ZZ \times \oI_{2^n} \}$ in
which $k_+: \oI_{N} \to \ZZ$ satisfies
\beq
    \left\{ \begin{aligned}
    k_+(t) - k_+(t-1) 
    &=  ((t)_2 \bitand \bitnot (t-1)_2) \bitiprod \overline{(s)}_2,
    \\
    k_+(0) &= h.
    \end{aligned} \right.
    \label{eq:dline-inc}
\eeq
\item $D'^n_m \brkt{h}{s} = \{(k_-(t),t) \in \ZZ \times I_N \}$ in which $k_-:
\oI_{N} \to \ZZ$ satisfies 
\beq
    \left\{ \begin{aligned}
    k_-(t-1) - k_-(t) 
    &=  ((t)_2 \bitand \bitnot (t-1)_2) \bitiprod \overline{(s)}_2,
    \\
    k_+(0) &= h.
    \end{aligned} \right.
\eeq
\end{enumerate}
\end{corollary}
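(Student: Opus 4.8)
The plan is to prove the non-recursive formula by induction on the recursion depth $m$, using the recursion \cref{eq:dline}; the corollary is the top-level case $m=n$ (where the row index $t$ ranges over all of $\oI_{2^n}$) of a statement I will establish for every section $D^n_{m,\ell}\brkt{h}{s}$. Throughout I write $s=\sum_b s_b 2^b$ for the binary digits of $s$ and $\nu_2(s')$ for the number of trailing zeros of $s'$. Two elementary bit facts drive everything. First, since $s'=(s'-1)+1$ flips a trailing block of ones to zeros and turns the next zero into a one, the mask $(s')_2 \bitand \bitnot (s'-1)_2$ is the single bit $2^{\nu_2(s')}$; hence the right-hand increment $((s')_2 \bitand \bitnot (s'-1)_2) \bitiprod \overline{(s)}_2$ is exactly the $\nu_2(s')$-th bit of $\overline{(s)}_2$, i.e.\ the digit $s_{m-1-\nu_2(s')}$ when $s$ is read as an $m$-bit number. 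Second, writing $t=\lfloor s/2\rfloor$, the $(m-1)$-bit reversal $\overline{(t)}_2$ and the $m$-bit reversal $\overline{(s)}_2$ agree in bit positions $0,\dots,m-2$, while the top position $m-1$ of $\overline{(s)}_2$ carries the digit $s_0=s\brem 2$.

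Before the main induction I would isolate a bit-free auxiliary lemma: the total horizontal displacement $E(m,s):=k_+(2^m-1)-k_+(0)$ of $D^n_{m}\brkt{h}{s}$ equals $s$. This follows immediately by induction from \cref{eq:dline}: for $s=2t$ the bottom point sits at column $h$, while the top point lies on the upper half $D^n_{m-1,2\ell+1}\brkt{h+t}{t}$, whose top column is $(h+t)+E(m-1,t)=(h+t)+t$, giving $E(m,2t)=2t$; the case $s=2t+1$ adds one more unit, so $E(m,s)=s$ in both cases, the base case $E(0,\cdot)=0$ being a single point. No bit manipulation is needed here, which keeps this step short.

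For the main induction, the base case $m=0$ is the single point $\{(h,s)\}$. In the inductive step I split $D^n_{m,\ell}\brkt{h}{s}$ through \cref{eq:dline} into its lower half $D^n_{m-1,2\ell}\brkt{h}{t}$, occupying local rows $0,\dots,2^{m-1}-1$, and its upper half $D^n_{m-1,2\ell+1}\brkt{h+t+s_0}{t}$, occupying local rows $2^{m-1},\dots,2^m-1$. For an increment at a row strictly interior to one of the halves, the trailing-zero count of the global row index equals that of the index within the half (using $\nu_2(2^{m-1}+r)=\nu_2(r)$ for $0<r<2^{m-1}$) and is at most $m-2$; so by the first bit fact the claimed level-$m$ increment reads a bit of $\overline{(s)}_2$ in a position $\le m-2$, which by the second bit fact equals the corresponding bit of $\overline{(t)}_2$, i.e.\ precisely the increment supplied by the inductive hypothesis applied to that half (which has slope $t$). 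The only increment not covered is the one across the seam, from local row $2^{m-1}-1$ to $2^{m-1}$: the displacement lemma places the last point of the lower half at column $h+E(m-1,t)=h+t$, while the upper half starts at column $h+t+s_0$, so the seam increment is $s_0$; and since $\nu_2(2^{m-1})=m-1$, the formula predicts exactly the top reversed bit $s_0$. This matches, completing the induction and hence the $m=n$ statement (i).

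The main obstacle is this seam step: everything interior is a clean bookkeeping match between the $m$- and $(m-1)$-bit reversals, but the single midpoint increment is where the recursion's shift ($+t$ or $+t+1$) must be reconciled with the bit formula, and this reconciliation is exactly what the displacement lemma buys us. Part (ii) for the dual digital line is then obtained verbatim from the mirror recursion \cref{eq:ddline}: there the upper half is shifted by $-t$ or $-(t+1)$ rather than $+t$ or $+(t+1)$, so every increment (and the displacement, which becomes $-s$) changes sign, turning $k_+(t)-k_+(t-1)$ into $k_-(t-1)-k_-(t)$ while leaving the bitwise right-hand side untouched.
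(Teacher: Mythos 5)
Your proof is correct, but there is essentially nothing in the paper to compare it against: the paper states this corollary without any proof, treating it as an immediate consequence of the recursions \cref{eq:dline} and \cref{eq:ddline}. Your induction supplies exactly the missing argument, and it isolates the right difficulty. The two bit facts --- that $(t)_2 \bitand \bitnot (t-1)_2$ is the single bit $2^{\nu_2(t)}$, and that the $m$-bit reversal of $s$ agrees with the $(m-1)$-bit reversal of $\lfloor s/2 \rfloor$ in positions $0,\dots,m-2$ while carrying $s \brem 2$ in position $m-1$ --- reduce every increment interior to either half of the recursion to the inductive hypothesis for slope $\lfloor s/2 \rfloor$, using $\nu_2(2^{m-1}+r)=\nu_2(r)$ for $0<r<2^{m-1}$. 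The only increment not inherited from the halves is the seam at row $2^{m-1}$, and this is where your displacement lemma is indispensable: it pins the top column of the lower half at $h+t$, so the seam increment equals the recursion's shift $s \brem 2$, which is precisely the top reversed bit selected by $\nu_2(2^{m-1})=m-1$; without this lemma the induction would not close, since the inductive hypothesis alone says nothing about where the lower half ends relative to where the upper half begins. The sign-flip derivation of (ii) is equally sound, as the dual recursion \cref{eq:ddline} differs from \cref{eq:dline} only in the sign of the shifts. Incidentally, your sectional formulation also quietly repairs two slips in the paper's statement: as written, $k_+:\oI_N\to\ZZ$ matches $D^n_m\brkt{h}{s}$ only when $m=n$ (otherwise the line has $2^m$ points, not $N$), and the initial condition in part (ii) should read $k_-(0)=h$; your proof handles both correctly.
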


The increments in \cref{eq:dline-inc} for a sequence of digital lines, for whom the ratio $s/(N-1)$ is constant, is shown in \cref{fig:dline-inc}. The distribution of the increments is reminiscent of the low-discrepancy sequences \cite{NRbook92}.

\begin{figure} \label{fig:dline-inc}
\centering
\includegraphics[width=0.9\textwidth]{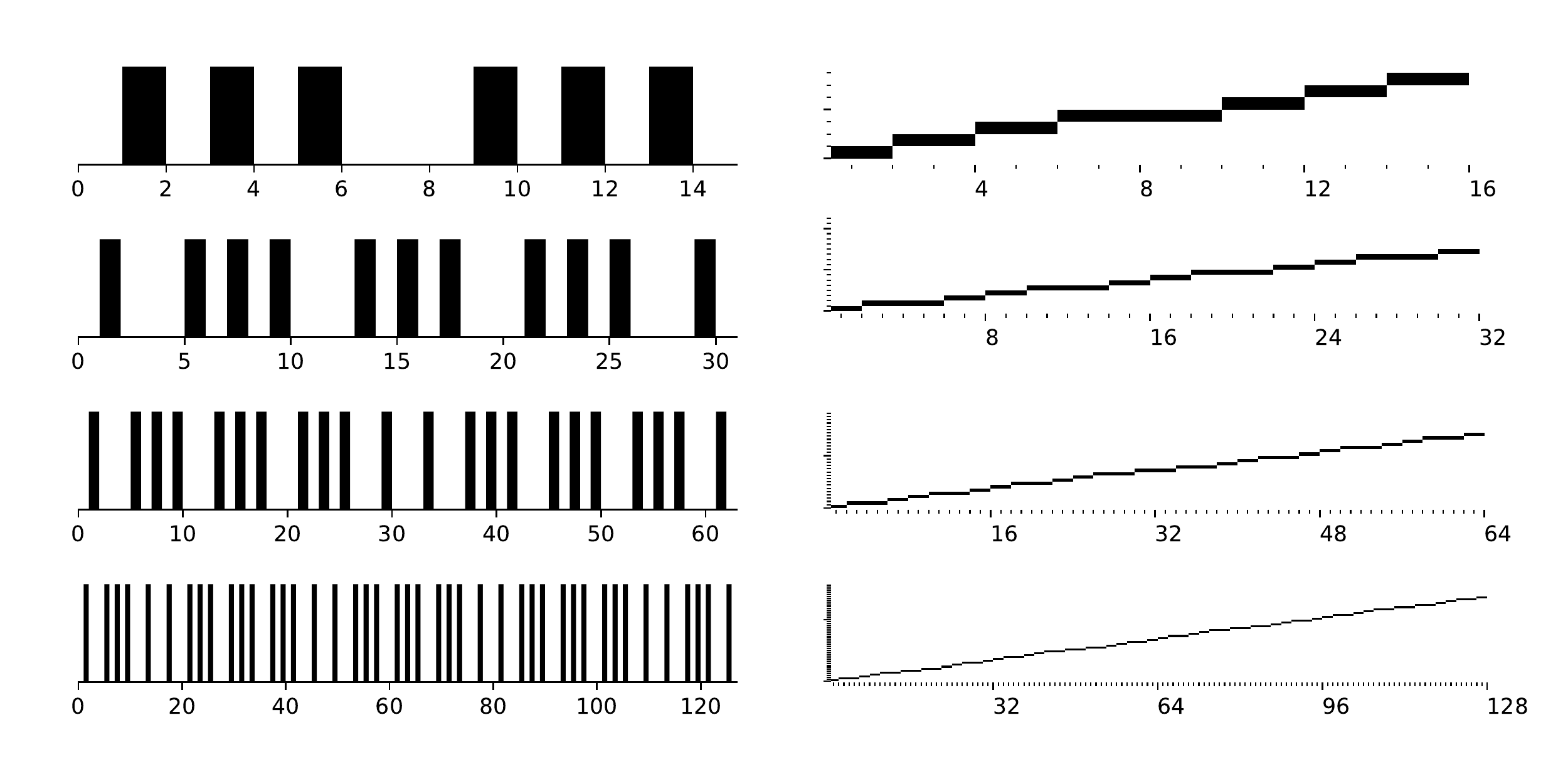}
\caption{Diagram depicting the increments (left) of corresponding
digital lines (right).}
\end{figure}

\subsection{Exact inverse} An important property of the single-quadrant ADRT is that it has an exact and fast inversion formula.

\begin{theorem}[Rim \cite{Rim20iadrt}]
If $f \in \cF^n_0$ then $R^n_{m-1,2\ell}[f]$ and $R^n_{m-1,2\ell+1}[f]$ can be computed from $R^n_{m,\ell}[f]$ for all $m \in I_{n+1}, \ell \in I_{2^{n-m}}$ in $\cO(N^2)$ operations.
\end{theorem}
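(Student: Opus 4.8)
The plan is to reduce the statement to inverting a single merge step $S_m$ on finitely supported data. From the recursive structure of the ADRT established above, $R^n_{m,\ell}[f] = S_m\big[R^n_{m-1,2\ell}[f],\, R^n_{m-1,2\ell+1}[f]\big]$, so writing $g := R^n_{m,\ell}[f]$, $f_0 := R^n_{m-1,2\ell}[f]$, and $f_1 := R^n_{m-1,2\ell+1}[f]$, it suffices to recover the pair $(f_0,f_1) \in \cF^{m-1}\times\cF^{m-1}$ from $g = S_m[f_0,f_1]$ and then to bound the cost.

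First I would subtract the two defining relations in \cref{eq:Sm}. For each fixed $t \in I_{2^{m-1}}$,
\beq
    g(h,2t) - g(h,2t+1) = f_1(h+t,t) - f_1(h+t+1,t),
\eeq
so that the known left-hand side equals a first difference of $f_1$ in its first argument. The map $S_m$ is not injective on all of $\cF^{m-1}\times\cF^{m-1}$ (adding a constant along a slope-row of $f_1$ and compensating in $f_0$ leaves $g$ unchanged), so the decisive point is to use that $f\in\cF^n_0$. Because the digital lines $D^n_{m-1,2\ell+1}\brkt{h}{t}$ lie in a vertical strip of width $2^{m-1}$ and $f$ is supported on the finite square $I_{2^n}\times I_{2^n}$, only finitely many of these lines meet $\supp f$; hence $f_1(\cdot,t)$ is finitely supported, and in particular $f_1(a,t)\to 0$ as $a\to+\infty$. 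This lets me telescope the first-difference relation from the right,
\beq
    f_1(a,t) = \sum_{a' \ge a} \big[\, g(a'-t,2t) - g(a'-t,2t+1) \,\big],
\eeq
a finite sum, which recovers $f_1$ uniquely; then $f_0$ is read off directly from $f_0(h,t) = g(h,2t) - f_1(h+t,t)$.

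For the complexity, I would evaluate each cumulative sum $f_1(\cdot,t)$ incrementally, updating $f_1(a,t)$ from $f_1(a+1,t)$ in $\cO(1)$ work per entry, and then obtain $f_0$ by one subtraction per entry. Collected over all slopes $t$ and the $2^{n-m}$ sections indexed by $\ell$, the effective support of $R^n_{m-1}[f]$ has $\cO(N^2)$ nonzero entries, yielding the claimed $\cO(N^2)$ bound for one level.

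The main obstacle is precisely the non-injectivity of $S_m$ on the full strip: the entire argument hinges on promoting the merge step to a bijection by restricting to finitely supported images, which is what the decay $f_1(a,t)\to 0$ supplies. I would therefore spend the most care on justifying this finite-support claim rigorously — tracing how the thinness of the digital lines together with the bounded support of $f$ forces $R^n_{m-1,2\ell+1}[f](\cdot,t)$ to vanish outside a bounded range of $h$ — since well-definedness of the telescoping sum, uniqueness of the inversion, and the $\cO(N^2)$ count all follow from it.
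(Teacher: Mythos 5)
Your proof is correct, and its core mechanism is the same as the paper's: reduce the statement to inverting the single merge step $S_m$ of \cref{eq:Sm}, express a first difference of the unknown lower-level data in terms of known values of $R^n_{m,\ell}[f]$, and telescope that difference using finiteness of support. Indeed, your relation $g(h,2t)-g(h,2t+1)=f_1(h+t,t)-f_1(h+t+1,t)$ is, up to re-indexing, exactly the paper's \cref{eq:diff_odd}. You deviate in two details. First, you telescope only $f_1$ and then read off $f_0(h,t)=g(h,2t)-f_1(h+t,t)$ by a single subtraction, whereas the paper derives a second difference relation \cref{eq:diff_even} for $f_0$ and telescopes both components; your shortcut is a small economy and equally valid. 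Second, and more consequentially, you sum from $+\infty$ downward, which requires the support of $f_1(\cdot,t)$ to be bounded \emph{above}, while the paper's formula \cref{eq:inv-formula} sums from $k=-s$ upward, using only that the support is bounded \emph{below}. For the theorem as stated (with $f\in\cF^n_0$) both directions work, but the paper's choice is what lets the inversion formula extend verbatim to the half-strip space $\cF^n_+$ in \cref{eq:inv-formula+}, which is the backbone of the bijectivity result \cref{thm:Fp-Fp} and hence of the entire range characterization; your direction would instead extend to images unbounded in the negative $i$-direction, the space the paper calls $\cF_-^n$ in its remark following \cref{eq:Rni}, where it explicitly notes this alternative. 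So your argument proves the stated theorem, but anyone wanting to reuse your formula downstream in this paper would need to flip the summation direction. Your identification of the kernel of $S_m$ on the unrestricted product space (constant along a slope-row of $f_1$, compensated in $f_0$) is a correct and clarifying observation that the paper leaves implicit.
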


We will briefly review the inversion formula. Let us define the differences, 
\beq
   \Delta^n_{m,\ell}[f] (h,s) := R^n_{m,\ell}[f] (h+1,s)
                            - R^n_{m,\ell}[f] (h,s),
\eeq
then these differences can be computed by the relations,
for all $\ell' \in I_{2^{n-m-1}}$
\begin{align}
    \Delta^n_{m-1,2\ell'}[f] (h,s) &= R^n_{m,\ell'}[f](h+1,2s)
                                  - R^n_{m,\ell'}[f](h,2s+1),
    \label{eq:diff_even}\\
    \Delta^n_{m-1,2\ell'+1}[f] (h,s) &= R^n_{m,\ell'}[f](h-s,2s+1)
                                  - R^n_{m,\ell'}[f](h-s,2s).
    \label{eq:diff_odd}
\end{align}
Using the differences, one recovers the $(m-1,\ell)$-single-quadrant ADRT,
\beq
    R^\ell_{m-1,\ell} [f] (h,s) 
    = \sum_{k=-s}^{h-1} \Delta^n_{m-1,\ell}[f] (k,s).
\eeq
So we obtain the formulae for $\ell' \in I_{2^{n-m-1}}$,
\beq
\begin{aligned}
    R^n_{m-1,2\ell'}[f] (h,s) 
    &= \sum_{k=-s}^{h-1} \left[ 
            R^n_{m,\ell'}[f](k+1,2s) - R^n_{m,\ell'}[f](k,2s+1)
            \right],
    \\
    R^n_{m-1,2\ell'+1}[f] (h,s) 
    &= \sum_{k=-s}^{h-1} \left[ 
            R^n_{m,\ell'}[f](k-s,2s+1) - R^n_{m,\ell'}[f](k-s,2s)
            \right],
    \label{eq:inv-formula}
\end{aligned}
\eeq
which yields the theorem.

This formula is readily extended to apply to images in $\cF^n_+$. Then, the operation given by this formula is the inverse of $S_m: \cF^n_+ \to \cF^n_+$. Let us use a notation for the operation \cref{eq:inv-formula}, say $S^{-1}_{m}:\cF^m_+ \to \cF^{m-1}_+ \times \cF^{m-1}_+$. That is, for $g \in \cF^m_+$,
\beq \label{eq:inv-formula+}
    \begin{aligned}
    S^{-1}_m[g]
    &:=
    \sqbrkt{g_0}{g_1},
    \\
    &
    \left\{
    \begin{aligned}
    g_0(h,s) &=
    \sum_{k=-\infty}^{h-1} [g(k+1, 2s) - g(k,2s+1)],
    \\
    g_1(h,s) &=
    \sum_{k=-\infty}^{h-1} [g(k-s, 2s+1) - g(k-s,2s)],
    \end{aligned}
    \right.
    \end{aligned}
\eeq
then the inversion formula \cref{eq:inv-formula} can be written as
\beq
    S^{-1}_{m} [R^n_{m,\ell'}[f]] 
    = 
    \sqbrkt{R^n_{m-1,2\ell'}[f]}{R^n_{m-1,2\ell'+1}[f]}.
    \label{eq:Smi}
\eeq
Let $(S_m^n)^{-1}: \cF^n_+ \to \cF^n_+$ be defined
\beq
    (S^n_m)^{-1}[g] := f
    \with
    \sqbrkt{f^n_{m-1,2\ell'}}{f^n_{m-1,2\ell'+1}}
    =
    S_m^{-1}[f^n_{m,\ell'}],
    \quad
    \ell \in I_{2^{n-m}}.
\eeq
Then we have
\beq
    (R^n_m)^{-1}[g]
    =
    (S_1^n)^{-1} \circ \cdots \circ (S_m^n)^{-1} [g].
\eeq

\begin{corollary}
The inverse of single-quadrant ADRT $(R^n)^{-1}: R^n[\cF^n_0] \to \cF_0^n$ is given by
\beq
    (R^n)^{-1}[g]
    =
    (S_1^n)^{-1} \circ \cdots \circ (S_n^n)^{-1} [g].
    \label{eq:Rni}
\eeq
\end{corollary}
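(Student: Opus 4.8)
The plan is to deduce the corollary from the factorization $R^n = R^n_n = S^n_n \circ \cdots \circ S^n_1$ together with the sectionwise inversion recorded in \cref{eq:Sm} and its inverse, specializing the general identity $(R^n_m)^{-1} = (S^n_1)^{-1} \circ \cdots \circ (S^n_m)^{-1}$ from the preceding display to $m = n$ and then checking that the restriction to square images behaves as claimed. The candidate formula $(S^n_1)^{-1} \circ \cdots \circ (S^n_n)^{-1}$ is exactly the reversed composition of the sectionwise inverses, so it remains only to verify that each factor is a genuine two-sided inverse on $\cF^n_+$ and that the composition carries $R^n[\cF^n_0]$ back into $\cF^n_0$.

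First I would verify that each $S^n_m$ is a bijection on $\cF^n_+$ with inverse $(S^n_m)^{-1}$. Since both act sectionwise, this reduces to the pair $S_m$ and $S^{-1}_m$ on $\cF^{m-1}_+ \times \cF^{m-1}_+$ and $\cF^m_+$. Applying $S^{-1}_m$ to $g = S_m[f_0,f_1]$ and using \cref{eq:Sm}, the $f_1$-contributions to $g(k+1,2s)$ and $g(k,2s+1)$ coincide and cancel, leaving the telescoping sum $\sum_{k=-\infty}^{h-1}[f_0(k+1,s) - f_0(k,s)]$; the analogous cancellation of the $f_0$-terms produces the telescoping sum for $g_1$. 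Here the half-strip hypothesis is essential: because $f_0, f_1 \in \cF^{m-1}_+$ have support bounded below, the boundary contribution at $k = -\infty$ vanishes and the telescoping collapses to $f_0(h,s)$ (resp.\ $f_1(h,s)$), giving $S^{-1}_m \circ S_m = \Id$. I would note that the defining sums for $S^{-1}_m$ are in fact finite on $\cF^m_+$ and that the cumulative sums again vanish below the support, so $(S^n_m)^{-1}$ maps $\cF^n_+$ into $\cF^n_+$; the reverse identity $S_m \circ S^{-1}_m = \Id$ follows by the same bookkeeping, so the composition in \cref{eq:Rni} is well-defined on $\cF^n_+$.

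With each factor invertible on $\cF^n_+$, composing yields $(S^n_1)^{-1} \circ \cdots \circ (S^n_n)^{-1} \circ R^n = \Id$ on $\cF^n_+$. Finally I would restrict to square images: since $\cF^n_0 \subset \cF^n_+$, every $g \in R^n[\cF^n_0]$ is $g = R^n[f]$ for some $f \in \cF^n_0$, and applying the formula returns exactly that $f \in \cF^n_0$. This simultaneously shows $R^n$ is injective on $\cF^n_0$ (it has a left inverse) and that the formula maps $R^n[\cF^n_0]$ onto $\cF^n_0$, so $(R^n)^{-1}: R^n[\cF^n_0] \to \cF^n_0$ is given by \cref{eq:Rni}.

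I expect the main obstacle to be the bookkeeping in the telescoping step, and specifically justifying why passing from finite square images to the infinite half-strip setting $\cF^n_+$ is what makes $S^{-1}_m$ a true inverse: the cancellation of the cross terms and the vanishing of the boundary contribution at $-\infty$ both rely on support being bounded below, and one must confirm the cumulative sums preserve this property so that the factors compose within $\cF^n_+$. Everything else is a direct assembly of the factorization and the sectionwise inverse already established.
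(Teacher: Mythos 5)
Your proposal is correct and takes essentially the same route as the paper: the cancellation of the cross terms and the telescoping sum you verify for $S^{-1}_m \circ S_m = \Id$ on half-strips is exactly the content of the paper's difference relations \cref{eq:diff_even}--\cref{eq:diff_odd} and the summation recovery encoded in \cref{eq:Smi}, after which the corollary follows, as in the paper, by composing the sectionwise inverses $(S^n_m)^{-1}$ and restricting to $\cF^n_0 \subset \cF^n_+$.
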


Thus, the computational complexity of computing $R^n$ is equal to that of computing $(R^n)^{-1}$, both $\cO(N^2 \log N)$. 

The direction of the sum in \cref{eq:inv-formula+} is not a unique choice. If so inclined, one may sum from $+\infty$ to $h$, granted the original image $f$ is taken from the half-vertical strip that is unbounded in the negative $i$-direction, say $\cF_-^n$. In the case $f \in \cF_0$, one may also take an average of two sums ranging from $-\infty$ to $\infty$. We will continue with one-sided sum here, and in \cref{sec:range}, it will become clear that this choice coincides with a back-substitution formula.

\section{Range characterization} \label{sec:range}
This section contains the range characterization of the single-quadrant ADRT over square images $\cF^n_0$. We proceed by first showing that $R^n: \cF^n_+ \to \cF^n_+$ is a bijection, due to the exact inversion formula. Next, we find $\cF^n_R \subset \cF^n_+$ for which $R^n: \cF^n_0 \to \cF^n_R$ is a bijection, thereby characterizing the range of $R^n$ over square images.

\subsection{Characterization of $R^n[\cF^n_+]$} The inversion formula \cref{eq:inv-formula} leads to the fact that $R^n$ is bijective. This is summed up in a corollary.

\begin{corollary}
$R^n (R^n)^{-1} [\delta^n_{h,s}] = \delta^n_{h,s}$ and
$(R^n)^{-1} R^n [\delta^n_{i,j}] = \delta^n_{i,j}$.
\end{corollary}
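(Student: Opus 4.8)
The plan is to reduce the two claimed identities to the statement that the elementary operator $S_m$ of \cref{eq:Sm} and its inverse $S^{-1}_m$ of \cref{eq:inv-formula+} are genuine two-sided inverses of one another on the half-strip spaces, and then to propagate this through the factorizations $R^n = S^n_n \circ \cdots \circ S^n_1$ and $(R^n)^{-1} = (S^n_1)^{-1} \circ \cdots \circ (S^n_n)^{-1}$ recorded in \cref{eq:Rni}. Because $\delta^n_{i,j}$ and $\delta^n_{h,s}$ belong to $\cF^n_0 \subset \cF^n_+$, and every operator in sight is linear, it suffices to argue on $\cF^n_+$, where the infinite sums defining $S^{-1}_m$ are well-defined.

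First I would establish the key lemma $S^{-1}_m \circ S_m = \Id$ and $S_m \circ S^{-1}_m = \Id$ on the appropriate half-strip spaces by a direct telescoping computation. For the first composition, I would feed $[f_0,f_1]$ into $S_m$ and observe that in the even-indexed difference $g(k+1,2s) - g(k,2s+1)$ the $f_1$-contributions cancel, leaving $f_0(k+1,s) - f_0(k,s)$; summing over $k$ from $-\infty$ to $h-1$ then telescopes to $f_0(h,s)$, since $f_0$ is supported in a half-strip and hence vanishes as $k \to -\infty$. The odd-indexed difference analogously recovers $f_1$. The reverse composition $S_m \circ S^{-1}_m = \Id$ follows from the mirror-image telescoping: after the index shift $j = k - t$, the two partial sums forming $g_0(h,t) + g_1(h+t,t)$ combine into $\sum_{k=-\infty}^{h-1}[g(k+1,2t) - g(k,2t)] = g(h,2t)$, and similarly for the odd slot.

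Next I would lift this to the full operators on $\cF^n$. Since $S^n_m$ and $(S^n_m)^{-1}$ act independently on the paired sections of an image, the section-wise identities immediately give $(S^n_m)^{-1} \circ S^n_m = \Id$ and $S^n_m \circ (S^n_m)^{-1} = \Id$ on $\cF^n_+$. Composing the two factorizations then telescopes: each adjacent pair $(S^n_m)^{-1} \circ S^n_m$ collapses to the identity, one value of $m$ at a time, yielding $(R^n)^{-1} R^n = \Id$ and $R^n (R^n)^{-1} = \Id$ on $\cF^n_+$. Evaluating at $\delta^n_{i,j}$ and $\delta^n_{h,s}$ then gives exactly the two stated identities.

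I expect the main obstacle to be purely bookkeeping: keeping the index shifts in the two telescoping computations straight — in particular the $h+t$ and $h-t$ arguments that couple the two output sections in \cref{eq:Sm} and \cref{eq:inv-formula+} — and confirming that the boundary contributions at $-\infty$ genuinely vanish. The latter is where membership in $\cF^n_+$ rather than $\cF^n$ is essential: the infinite sums in \cref{eq:inv-formula+} converge, and the telescoped boundary term is zero, precisely because the support is bounded below.
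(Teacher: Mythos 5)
Your proposal is correct and follows essentially the same route as the paper: both factor $R^n$ and $(R^n)^{-1}$ into the elementary operators $S^n_m$ and $(S^n_m)^{-1}$ via \cref{eq:Rni}, establish that $S_m$ and $S_m^{-1}$ are two-sided inverses by telescoping sums (with the boundary term at $-\infty$ vanishing because supports are bounded below), and collapse the composed chain pairwise. The only difference is cosmetic: you prove the operator identities on all of $\cF^n_+$ and then specialize to the Kronecker deltas, whereas the paper verifies them directly on the deltas themselves, one direction by invoking \cref{eq:Smi} and the other by the same telescoping computation you describe.
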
 

\begin{proof}
By the virtue of \cref{eq:Smi},
$(S_k) \circ (S_k)^{-1} [\delta^{n-k+1}_{h,s}] = \delta^{n-k+1}_{h,s}$,
so that
\[
    R^n (R^n)^{-1} [\delta^n_{h,s}]
    = (S^n_n) \circ \cdots \circ (S^n_1)
    \circ (S^n_1)^{-1} \circ \cdots \circ (S^n_n)^{-1} [\delta^n_{h,s}]
    =  \delta^n_{h,s}.
\]
Similarly, the latter identity follows from
\[
    \begin{aligned}
    (S_k)^{-1} \circ (S_k)
    \sqbrkt{\delta^{n-k+1}_{h,s}}{0}
    &=
    \sqbrkt{\delta^{n-k+1}_{h,s}}{0},
    \\
    \quad
        (S_k)^{-1} \circ (S_k)
    \sqbrkt{0}{\delta^{n-k+1}_{h,s}}
    &=
    \sqbrkt{0}{\delta^{n-k+1}_{h,s}},
    \end{aligned}
\]
which can be verified from direct computation, for example,
\[
    \begin{aligned}
    (S_k)^{-1} \circ (S_k) &
    \sqbrkt{\delta^{n-k+1}_{h,s} }{0}
    \\&=
    (S_k)^{-1} 
    \left[ \delta^{n-k+2}_{h,2s} + \delta^{n-k+2}_{h,2s+1} \right]
    \\&= 
    \sqbrkt{\sum_{k=-\infty}^{h' - 1} \delta^{n-k+1}_{h,s} (k+1,s')
    -
    \sum_{k=-\infty}^{h' - 1} \delta^{n-k+1}_{h,s} (k,s')}{0}
    \\
    &= \sqbrkt{\delta^{n-k+1}_{h,s}(h',s')}{0},
    \end{aligned}
\]
and the second equality follows similarly.
\end{proof}

Furthermore, we will show that $R^n[\cF^n_+] = \cF^n_+$ by providing an explicit
expression for $(R^n)^{-1}[\delta^n_{h,s}]$ for each $(h,s) \in \ZZ \times
I_{2^n}$. First, it will be expedient to define the mapping $\lambda: \ZZ \times
\NN \times \{\pm 1\} \to (\ZZ \times \NN \times \{\pm 1\})^2$ 
\beq
    \lambda(h,s,\sigma) 
    := 
    \sqbrkt{\lambda_0}{\lambda_1},
    \quad
    \left\{
    \begin{matrix*}[l]
     \lambda_0 := & \big[h + (s \brem 2) , 
                  &  \lfloor s/2 \rfloor, 
                  & \sigma \cdot (-1)^{(s \brem 2)} \big],
    \\
     \lambda_1 := & \big[ h + \lfloor s/2 \rfloor + 1, 
                  & \lfloor s/2 \rfloor, 
                  & \sigma \cdot (-1)^{(s \brem 2)+1} \big].
     \end{matrix*}
    \right.
    \label{eq:lambda-def}
\eeq
It is straightforward to show that $\lambda$ is an injective map, as the pair
$\sqbrkt{\lambda_0}{\lambda_1}$ uniquely defines the triple $(h,s,\sigma)$ for
which $\lambda(h,s,\sigma) = \sqbrkt{\lambda_0}{\lambda_1}$: denoting the three
entries by
    $\lambda_0 = [\lambda_{0,0}, \lambda_{0, 1}, \lambda_{0, 2}],$
    $\lambda_1 = [\lambda_{1,0}, \lambda_{1, 1}, \lambda_{1, 2}]$
one recovers e.g. $h = \lambda_{1, 0} - \lambda_{1,1} - 1$, $s = \lambda_{0, 0}
+ 2\lambda_{1, 0} - 3h - 2$, $\sigma = \lambda_{0,2} \cdot (-1)^{(s \brem 2)}$
successively.

Below, let $(\cdot)^i_+: \RR \to \RR_+$ is given by $(x)^i_+ := \max\{0,x^i\}$ for $i \in \NN$. When $i=1$ it is also called the rectified linear unit (ReLU), and when $i=0$ it is the Heaviside step function.

\newpage

\begin{lemma} \label{lem:delta-1lvl}
Let $(h,s,\sigma) \in \ZZ \times \oI_{2^m} \times \{\pm1\}$ and let 
\beq
\lambda_0 = \sqbrkte{h_0}{s_0}{\sigma_0},
\quad
\lambda_1 = \sqbrkte{h_1}{s_1}{\sigma_1},
\quad
\text{where }
\sqbrkt{\lambda_0}{\lambda_1} = \lambda[h,s,\sigma].
\eeq
Then $g_0,g_1 \in \cF^m_+$ of the inverse $(S_m)^{-1} [\sigma \delta^m_{h,s}] = \sqbrkt{g_0}{g_1}$ are given by the formula
\beq
    g_b(h,s)
    =
    \sigma_b \sum_{h' \ge h_b} \delta^{m-1}_{h',s_b} (h,s)
    =
    \sigma_b \delta^{m-1}_{s_b}(s) (h - h_b)_+^0,
    \quad
    b \in \{0,1\}.
    \label{eq:iSm-delta}
\eeq
\end{lemma}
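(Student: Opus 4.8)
The plan is to verify the two formulas by directly substituting the input $g = \sigma\,\delta^m_{h,s}$ into the explicit expression for $S_m^{-1}$ in \cref{eq:inv-formula+} and evaluating the resulting sums. Because $g$ is supported at a single point, each of the two infinite sums defining $g_0$ and $g_1$ collapses to a Heaviside step in the height variable, supported on one slope. All that remains is bookkeeping: for each of the two difference terms, determine which summand survives, at what height threshold, and with what sign. I expect no genuine difficulty here, since the map $\lambda$ of \cref{eq:lambda-def} was built precisely to record the triple (height, slope, sign) that this computation produces.

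To organise the calculation I would keep $(h,s,\sigma)$ as the fixed parameters defining the delta and write $(h',s')$ for the evaluation point of $g_0$ and $g_1$, to avoid clashing with the statement's reuse of $(h,s)$. Substituting into the first line of \cref{eq:inv-formula+}, the term $\delta^m_{h,s}(k+1,2s')$ is nonzero only if $s$ is even with $s' = s/2$ and $k = h-1$, whereas $\delta^m_{h,s}(k,2s'+1)$ is nonzero only if $s$ is odd with $s' = \lfloor s/2\rfloor$ and $k = h$. Hence exactly one term contributes, selected by the parity $s \brem 2$, and it survives the sum $\sum_{k=-\infty}^{h'-1}$ exactly when $h'$ exceeds the relevant threshold. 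This yields a step function in $h'$ concentrated at the slope $s' = \lfloor s/2\rfloor$, i.e.\ the claimed form $\sigma_0\,\delta^{m-1}_{s_0}(s')\,(h'-h_0)_+^0$; in particular $g_0$ is bounded below in $h'$, confirming $g_0 \in \cF^{m-1}_+$.

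The computation for $g_1$ is the same in spirit but uses the shifted arguments $(k-s',2s'+1)$ and $(k-s',2s')$ from the second line of \cref{eq:inv-formula+}; the surviving summand now sits at $k = h + s'$, raising the height threshold by $\lfloor s/2\rfloor + 1$ and flipping the sign relative to $g_0$. To close the proof I would run the two-way case split on $s\brem 2$: for $s$ even one reads off $\sigma_0 = \sigma$, $\sigma_1 = -\sigma$ and heights $h$ and $h + \lfloor s/2\rfloor + 1$, while for $s$ odd one reads off $\sigma_0 = -\sigma$, $\sigma_1 = \sigma$ and heights $h+1$ and $h + \lfloor s/2\rfloor + 1$; both slopes equal $\lfloor s/2\rfloor$. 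These are exactly the entries $(h_b, s_b, \sigma_b)$ produced by $\lambda$ in \cref{eq:lambda-def}, with the sign captured by the factor $(-1)^{s\brem 2}$.

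The only subtlety is the endpoint convention, which I would note explicitly: the two expressions in \cref{eq:iSm-delta} agree because $\sum_{h'' \ge h_b}\delta^{m-1}_{h'',s_b}(h',s')$ equals $\delta^{m-1}_{s_b}(s')\,(h'-h_b)_+^0$ under the convention $(x)_+^0 = 1$ for $x \ge 0$, so that the lower summation limit $k = h_b - 1$ corresponds to inclusion starting at $h' = h_b$. The main obstacle is therefore purely mechanical: keeping the shift $+s'$ and the alternating sign consistent across the parity split. Once those are tracked, the identification with $\lambda$ is immediate.
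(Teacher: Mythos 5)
Your proposal is correct and follows essentially the same route as the paper's proof: substitute $\sigma\delta^m_{h,s}$ into the explicit inversion formula, observe that parity of $s$ selects exactly one surviving summand in each of $g_0$ and $g_1$, and read off the resulting heights, slopes, and signs to match the map $\lambda$ in \cref{eq:lambda-def}. Your case-split values ($\sigma_0=\sigma$, $\sigma_1=-\sigma$, heights $h$ and $h+\lfloor s/2\rfloor+1$ for $s$ even; $\sigma_0=-\sigma$, $\sigma_1=\sigma$, heights $h+1$ and $h+\lfloor s/2\rfloor+1$ for $s$ odd) agree exactly with the paper's computation.
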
 

\begin{proof}
In the case $s$ is even, let $s = 2t$ then
\beq
    \left\{
    \begin{aligned}
    g_0(h',s')
    &=
    \sum^{h'-1}_{k = -s'} \delta^{m-1}_{h,s} (k+1, 2s')
    =
    \delta_{t}(s') (h' - h)_+^0,
    \\
    g_1(h',s')
    &=
    -\sum^{h'-1}_{k = -s'} \delta^{m-1}_{h,s} (k+1, 2s')
    =
    -\delta_{t}(s') (h' - h - t - 1)_+^0    
    \end{aligned}
    \right.
\eeq
In case $s$ is odd, let $s = 2t + 1$,
\beq
    \left\{
    \begin{aligned}
    g_0(h',s')
    &=
    -\sum_{k= -s'}^{h'-1} \delta^{m-1}_{h,s} (k, 2s' + 1) 
    = 
    -\delta_{t}(s') (h' - h- 1)_+^0,
    \\
    g_1(h',s')
    &=
    -\sum^{h'-1}_{k = -s'} \delta^{m-1}_{h,s} (k-s', 2s')
    =
    \delta_{t}(s') (h' - h - t -1)_+^0.
    \end{aligned}
    \right.
\eeq
These direct results are summarized via \cref{eq:lambda-def} and \cref{eq:iSm-delta}.
\end{proof}
 Observe that $\supp S_m^{-1} [\delta^m_{h,s}]$ is unbounded in the positive $h$-direction and so belongs to $\cF^n_+$.  The vertical half-lines on which the inverse is supported is denoted by,
\beq
        L^m(h,s)
        :=
        \left\{ 
        (h',s') \in \ZZ \times I_{2^m} :
            h' \ge h, s' = s 
            \right\},
\eeq
so we can also write $(S_m)^{-1} [\sigma \cdot \delta^m_{h,s}] = \sqbrkt{\sigma_0 \cdot \indic_{L^{m-1}(h_0,s_0)}}{ \sigma_1 \cdot \indic_{L^{m-1}(h_1,s_1)}}$.

A plot of the inverse $(S^n_n)^{-1}[\delta^n_{h,s}]$ is shown in \cref{fig:Snm-delta}: the inverted image consists of two slices of vertically oriented jumps.

\begin{figure}
\centering
\def\svgwidth{0.9\textwidth}
{\scriptsize 
\begingroup%
  \makeatletter%
  \providecommand\color[2][]{%
    \errmessage{(Inkscape) Color is used for the text in Inkscape, but the package 'color.sty' is not loaded}%
    \renewcommand\color[2][]{}%
  }%
  \providecommand\transparent[1]{%
    \errmessage{(Inkscape) Transparency is used (non-zero) for the text in Inkscape, but the package 'transparent.sty' is not loaded}%
    \renewcommand\transparent[1]{}%
  }%
  \providecommand\rotatebox[2]{#2}%
  \newcommand*\fsize{\dimexpr\f@size pt\relax}%
  \newcommand*\lineheight[1]{\fontsize{\fsize}{#1\fsize}\selectfont}%
  \ifx\svgwidth\undefined%
    \setlength{\unitlength}{679.01012046bp}%
    \ifx\svgscale\undefined%
      \relax%
    \else%
      \setlength{\unitlength}{\unitlength * \real{\svgscale}}%
    \fi%
  \else%
    \setlength{\unitlength}{\svgwidth}%
  \fi%
  \global\let\svgwidth\undefined%
  \global\let\svgscale\undefined%
  \makeatother%
  \begin{picture}(1,0.82538112)%
    \lineheight{1}%
    \setlength\tabcolsep{0pt}%
    \put(0,0){\includegraphics[width=\unitlength,page=1]{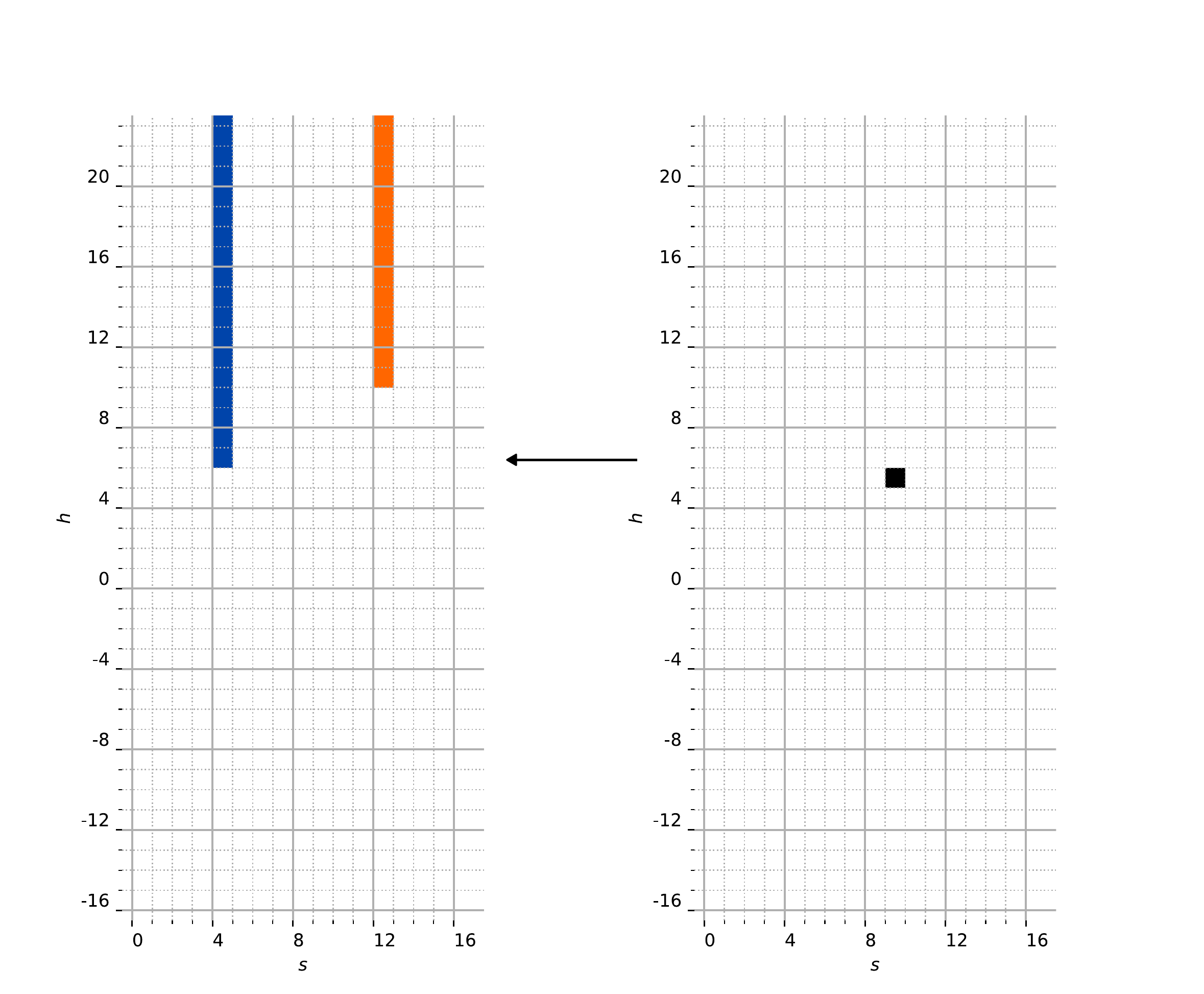}}%
    \put(0.45012395,0.45866189){\color[rgb]{0,0,0}\makebox(0,0)[lt]{\lineheight{1.25}\smash{\begin{tabular}[t]{l}$(S^n_n)^{-1}$\end{tabular}}}}%
    \put(0.33083264,0.50947115){\color[rgb]{1,0.4,0}\makebox(0,0)[lt]{\lineheight{1.25}\smash{\begin{tabular}[t]{l}$+$\end{tabular}}}}%
    \put(0.20049585,0.4409891){\color[rgb]{0,0.26666667,0.66666667}\makebox(0,0)[lt]{\lineheight{1.25}\smash{\begin{tabular}[t]{l}$-$\end{tabular}}}}%
    \put(0.22037773,0.75247196){\color[rgb]{0,0,0}\makebox(0,0)[lt]{\lineheight{1.25}\smash{\begin{tabular}[t]{l}$S_n [\delta^n_{h,s}]$\end{tabular}}}}%
    \put(0.70417025,0.75247196){\color[rgb]{0,0,0}\makebox(0,0)[lt]{\lineheight{1.25}\smash{\begin{tabular}[t]{l}$\delta^n_{h,s}$\end{tabular}}}}%
  \end{picture}%
\endgroup%
}
\caption{A plot of $(S_n^n)^{-1} [\delta^n_{h,s}]$ given by the formula \cref{eq:Smi}.}
\label{fig:Snm-delta}
\end{figure}

Then \cref{lem:delta-1lvl} and the decomposition of $R^n$ \cref{eq:Rni} enable us to compute $(R^n)^{-1} [\delta^n_{h,s}]$. A repeated application of $\lambda$ \cref{eq:lambda-def} yields,
\beq \label{eq:lambda-t}
    \begin{aligned}
    \lambda_0 = (h_0,s_0,\sigma_0),
    \quad &\cdots, \quad 
    \lambda_{2^n-1} = (h_{2^n-1},s_{2^n-1},\sigma_{2^n-1}),
    \\
    (\lambda_0, ... , \lambda_{2^n-1})
    &:=
    \underbrace{ \lambda \otimes ... \otimes \lambda}_{n \text{ times}} (h,s).
    \end{aligned}
\eeq
For $t \in I_{2^m}$, we may rewrite $\lambda_t$ in terms of the binary expansion $t = (b_0 b_1 \cdots b_k)_2$ by the shorthand $\lambda_t = \lambda_{b_0 b_1 \cdots b_k}$. Using this, one expresses diagrammatically the repeated application of the map $\lambda$ \cref{eq:lambda-def},
\vspace{-0.6cm} %
\begin{center}
\def\svgwidth{0.4\textwidth}
{ 
\begingroup%
  \makeatletter%
  \providecommand\color[2][]{%
    \errmessage{(Inkscape) Color is used for the text in Inkscape, but the package 'color.sty' is not loaded}%
    \renewcommand\color[2][]{}%
  }%
  \providecommand\transparent[1]{%
    \errmessage{(Inkscape) Transparency is used (non-zero) for the text in Inkscape, but the package 'transparent.sty' is not loaded}%
    \renewcommand\transparent[1]{}%
  }%
  \providecommand\rotatebox[2]{#2}%
  \newcommand*\fsize{\dimexpr\f@size pt\relax}%
  \newcommand*\lineheight[1]{\fontsize{\fsize}{#1\fsize}\selectfont}%
  \ifx\svgwidth\undefined%
    \setlength{\unitlength}{671.51695767bp}%
    \ifx\svgscale\undefined%
      \relax%
    \else%
      \setlength{\unitlength}{\unitlength * \real{\svgscale}}%
    \fi%
  \else%
    \setlength{\unitlength}{\svgwidth}%
  \fi%
  \global\let\svgwidth\undefined%
  \global\let\svgscale\undefined%
  \makeatother%
  \begin{picture}(1,0.71525087)%
    \lineheight{1}%
    \setlength\tabcolsep{0pt}%
    \put(0.63821383,0.59718131){\color[rgb]{0,0,0}\makebox(0,0)[lt]{\begin{minipage}{0.09732762\unitlength}\raggedright \end{minipage}}}%
    \put(0.09893027,0.26406152){\color[rgb]{0,0,0}\makebox(0,0)[lt]{\lineheight{1.25}\smash{\begin{tabular}[t]{l}$\lambda_0$\end{tabular}}}}%
    \put(0,0.71525087){\color[rgb]{0,0,0}\makebox(0,0)[lt]{\lineheight{1.25}\smash{\begin{tabular}[t]{l} \end{tabular}}}}%
    \put(0,0){\includegraphics[width=\unitlength,page=1]{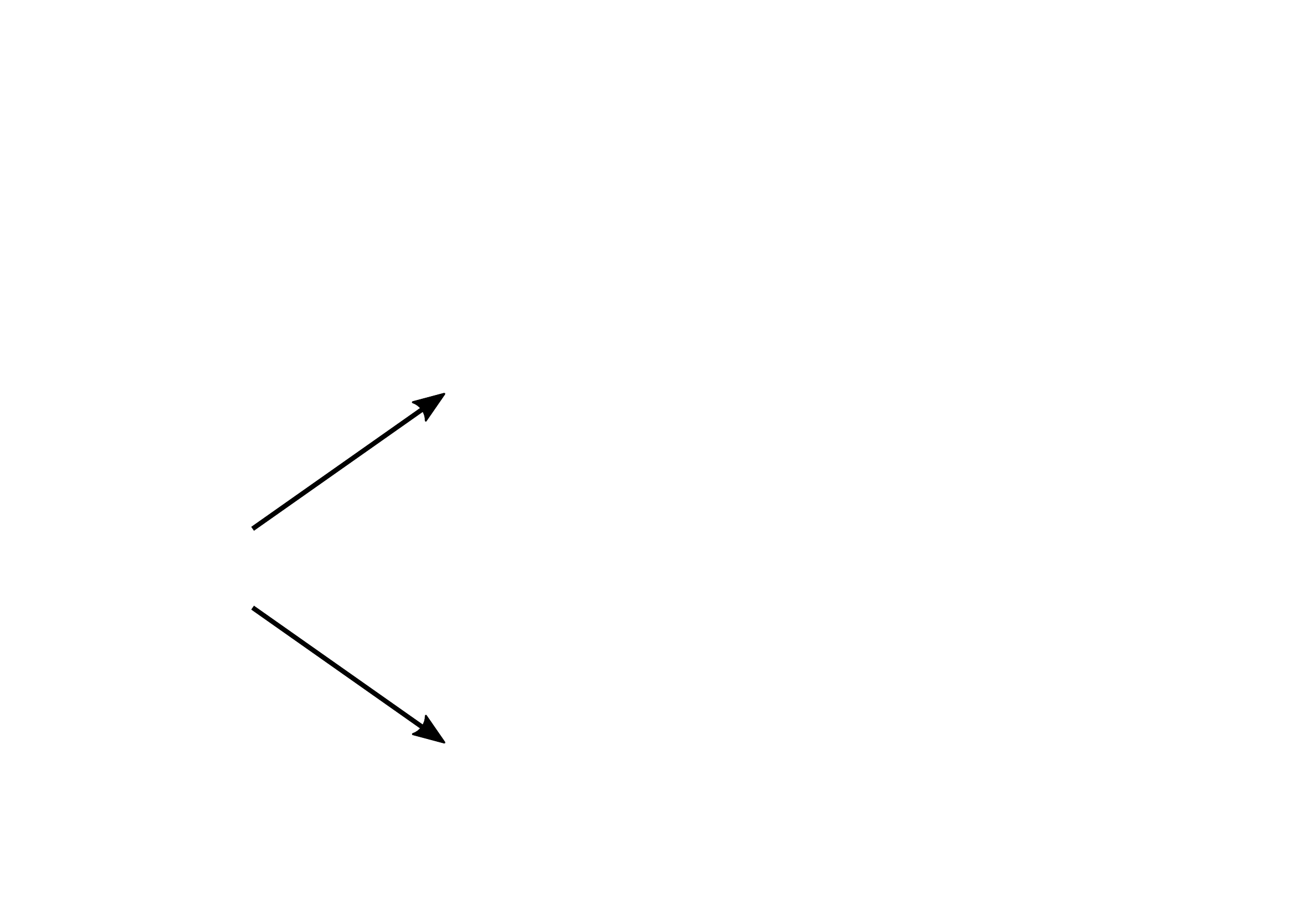}}%
    \put(0.3624848,0.41877906){\color[rgb]{0,0,0}\makebox(0,0)[lt]{\lineheight{1.25}\smash{\begin{tabular}[t]{l}$\lambda_{10}$\end{tabular}}}}%
    \put(0.3624848,0.11563626){\color[rgb]{0,0,0}\makebox(0,0)[lt]{\lineheight{1.25}\smash{\begin{tabular}[t]{l}$\lambda_{11}$\end{tabular}}}}%
    \put(0,0){\includegraphics[width=\unitlength,page=2]{lambda-diagram.pdf}}%
    \put(0.61240395,0.51242413){\color[rgb]{0,0,0}\makebox(0,0)[lt]{\lineheight{1.25}\smash{\begin{tabular}[t]{l}$\lambda_{100}$\end{tabular}}}}%
    \put(0.61240395,0.31514053){\color[rgb]{0,0,0}\makebox(0,0)[lt]{\lineheight{1.25}\smash{\begin{tabular}[t]{l}$\lambda_{101}$\end{tabular}}}}%
    \put(0.61240395,0.21187206){\color[rgb]{0,0,0}\makebox(0,0)[lt]{\lineheight{1.25}\smash{\begin{tabular}[t]{l}$\lambda_{110}$\end{tabular}}}}%
    \put(0.61240395,0.00977684){\color[rgb]{0,0,0}\makebox(0,0)[lt]{\lineheight{1.25}\smash{\begin{tabular}[t]{l}$\lambda_{111}$\end{tabular}}}}%
    \put(0.83257715,0.00793485){\color[rgb]{0,0,0}\makebox(0,0)[lt]{\lineheight{1.25}\smash{\begin{tabular}[t]{l}$\cdots$\end{tabular}}}}%
    \put(0.83257715,0.21003014){\color[rgb]{0,0,0}\makebox(0,0)[lt]{\lineheight{1.25}\smash{\begin{tabular}[t]{l}$\cdots$\end{tabular}}}}%
    \put(0.83257715,0.31329847){\color[rgb]{0,0,0}\makebox(0,0)[lt]{\lineheight{1.25}\smash{\begin{tabular}[t]{l}$\cdots$\end{tabular}}}}%
    \put(0.83257715,0.51242413){\color[rgb]{0,0,0}\makebox(0,0)[lt]{\lineheight{1.25}\smash{\begin{tabular}[t]{l}$\cdots$\end{tabular}}}}%
    \put(0.26675278,0.2605109){\color[rgb]{0,0,0}\makebox(0,0)[lt]{\lineheight{1.25}\smash{\begin{tabular}[t]{l}$\lambda$\end{tabular}}}}%
  \end{picture}%
\endgroup%
}
\end{center}
The indices $\{h_t\}_{t=0}^{2^n-1}$ then provide the lower bounds of the support of $(R^n)^{-1}[\delta^n_{h,s}]$. A little more precisely, the support equals the union of vertical half-lines
\beq
  L^n_m(h,s)
  :=
  \bigcup_{t \in I_{2^{n-m}}}
  L^n(h_t,s_t).
\eeq

\begin{theorem} \label{thm:Fp-Fp}
$R^n$ is a bijection from $\cF^n_+$ onto itself. In particular, for each triple $(h,s,\sigma) \in \ZZ \times I_{2^n} \times \{\pm 1\}$,
\beq \label{eq:iRndelta}
    (R^n)^{-1} [\sigma \delta^n_{h,s}] (i,j)
    = 
    \sigma_j (i - h_j)_+^{n-1},
    \quad
    (i,j) \in \ZZ \times I_{2^n},
\eeq
in which $(h_j, \sigma_j) \in \ZZ \times \{\pm 1\}$ as given in \cref{eq:lambda-t}.
\end{theorem}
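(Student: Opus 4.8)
The plan is to prove the two assertions separately: first that $R^n$ is a bijection of $\cF^n_+$, and then the explicit inverse formula \cref{eq:iRndelta} by induction on $n$.

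For the bijection I would rely on the factorization \cref{eq:Rni} together with the one-sided summation \cref{eq:inv-formula+}. First I would check that $R^n$ and $(R^n)^{-1}$ both preserve $\cF^n_+$: if $f$ vanishes for $i\le b_f$ then every digital line with sufficiently negative starting height misses $\supp f$, so $R^n[f]\in\cF^n_+$; conversely, for $g\in\cF^n_+$ each sum $\sum_{k=-\infty}^{h-1}$ in \cref{eq:inv-formula+} has only finitely many nonzero terms, so $(R^n)^{-1}[g]$ is well defined and again bounded below in $i$. Since \cref{eq:Smi} and the preceding corollary already give $S^n_m\circ(S^n_m)^{-1}=(S^n_m)^{-1}\circ S^n_m=\Id$ on $\cF^n_+$, the two compositions $R^n=S^n_n\circ\cdots\circ S^n_1$ and $(R^n)^{-1}=(S^n_1)^{-1}\circ\cdots\circ(S^n_n)^{-1}$ are mutually inverse, which is the claim.

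For the formula I would induct on $n$. The base case $n=1$ is exactly \cref{lem:delta-1lvl} with $m=1$, since there $(i-h_j)^0_+$ is the indicator of the half-line $L^0(h_j,s_j)$. For the step I would write $(R^n)^{-1}=\big[(S^n_1)^{-1}\circ\cdots\circ(S^n_{n-1})^{-1}\big]\circ(S^n_n)^{-1}$ and observe that the bracketed operator acts within each width-$2^{n-1}$ section exactly as $(R^{n-1})^{-1}$, because the factors $(S^n_m)^{-1}$ with $m\le n-1$ never cross the $2^{n-1}$-boundary between the two halves. Applying \cref{lem:delta-1lvl} to the outer factor $(S^n_n)^{-1}$ replaces $\sigma\delta^n_{h,s}$ by two signed half-lines $\sigma_b\,\indic_{L^{n-1}(a_b,\bar s)}$, $b\in\{0,1\}$, living in sections $\ell'=0,1$, where $\bar s=\lfloor s/2\rfloor$ and $\sqbrkt{(a_0,\bar s,\sigma_0)}{(a_1,\bar s,\sigma_1)}=\lambda(h,s,\sigma)$ from \cref{eq:lambda-def}. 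Expanding each half-line as the locally finite sum $\sum_{h'\ge a_b}\delta^{n-1}_{h',\bar s}$ and applying the induction hypothesis to the plain delta $\delta^{n-1}_{h',\bar s}$ gives, in section $b$, the value $\sigma_b\sum_{h'\ge a_b}\tilde\sigma_j\,(i-\tilde h_j(h'))^{n-2}_+$, where $(\tilde h_j(h'),\tilde\sigma_j)$ come from $\lambda^{\otimes(n-1)}(h',\bar s,1)$ and, crucially, $\tilde h_j(h')=h'+c_j$ with $c_j$ independent of $h'$.

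The engine of the step is the discrete summation (hockey-stick) identity that the antiderivative $\sum_{h'\ge a}$ sends a one-sided power of degree $n-2$ to one of degree $n-1$, namely $\sum_{h'\ge a}(i-h'-c)^{n-2}_+=(i-(a+c))^{n-1}_+$ in the discrete normalization $(m)^p_+=\binom{m+p}{p}$ for which the Heaviside of \cref{lem:delta-1lvl} is $(\cdot)^0_+$; this simultaneously raises the degree and collapses the base point to $a_b+c_j$. I would then close the induction by matching against $\lambda^{\otimes n}(h,s,\sigma)=\lambda^{\otimes(n-1)}\!\big(\lambda(h,s,\sigma)\big)$ from \cref{eq:lambda-t}: the leading bit of $j$ selects the half $b$ and the remaining bits index the sub-tree, so $h_j=a_b+c_j$ and $\sigma_j=\sigma_b\tilde\sigma_j$ exactly. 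The main obstacle is precisely this bookkeeping: (i) stating and proving the summation identity in the correct one-sided-power normalization, so that no spurious lower-order (Faulhaber-type) terms survive and the output is genuinely a single one-sided power; and (ii) verifying that the base-point shift produced by the summation reproduces the first-level action of $\lambda$ term by term, so that the recursion $\lambda^{\otimes n}=\lambda^{\otimes(n-1)}\circ\lambda$ is faithfully matched. The sign tracking, by contrast, is automatic, since the signs simply multiply along the binary tree and $\lambda$ is linear in its $\sigma$-argument.
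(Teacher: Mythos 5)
Your proposal is correct and follows essentially the same route as the paper's proof: factor $(R^n)^{-1}$ through the maps $(S^n_m)^{-1}$ as in \cref{eq:Rni}, use \cref{lem:delta-1lvl} as the engine, expand each one-sided power as a locally finite sum of shifted Kronecker deltas, apply the next $(S_m)^{-1}$ term by term, and raise the degree by one at each level with a discrete summation identity, with the binary tree \cref{eq:lambda-t} doing the bookkeeping. The differences are organizational: you induct on $n$ by observing that $(S^n_1)^{-1}\circ\cdots\circ(S^n_{n-1})^{-1}$ acts on each width-$2^{n-1}$ section as a copy of $(R^{n-1})^{-1}$, whereas the paper fixes $n$ and inducts on the depth $k$ of the tree; and you deduce bijectivity from the mutual-inverse relations \cref{eq:Smi} on $\cF^n_+$, whereas the paper deduces it from injectivity of $\lambda$ together with the explicit formula. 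Both routes are sound.

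One substantive point, in your favor: the normalization issue you single out as ``the main obstacle'' is real, and your resolution is the correct one. The paper defines $(x)^i_+ := \max\{0,x^i\}$, an ordinary truncated power, but with that convention neither the inductive step nor the formula \cref{eq:iRndelta} is literally correct. For instance, for $n=2$ and $(h,s,\sigma)=(0,0,1)$, composing \cref{lem:delta-1lvl} twice gives $(R^2)^{-1}[\delta^2_{0,0}](i,0)=(i+1)\,\indic_{\{i\ge 0\}}$, not $\max\{0,i\}$. The paper's key manipulation, collapsing the double sum $\sum_{j}\sum_{j'} j^{k-1}\delta_{\cdot+j+j'-2}$ into $\sum_j (j^{k-1}\cdot j)\,\delta_{\cdot+j-1}$, amounts to the identity $\sum_{j=1}^{J} j^{k-1}=J^{k}$, which fails for $k\ge 2$ with ordinary powers; it is exactly the hockey-stick identity in your normalization $(m)^p_+=\binom{m+p}{p}\indic_{\{m\ge 0\}}$, under which your summation identity $\sum_{h'\ge a}(i-h'-c)^{n-2}_+=(i-a-c)^{n-1}_+$ holds with no spurious lower-order terms and the induction closes cleanly. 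So a careful write-up along your lines would not only reproduce the paper's argument but also repair its statement of the one-sided power.
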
 

\begin{figure}
\centering
\def\svgwidth{0.8\textwidth}
{\scriptsize 
\begingroup%
  \makeatletter%
  \providecommand\color[2][]{%
    \errmessage{(Inkscape) Color is used for the text in Inkscape, but the package 'color.sty' is not loaded}%
    \renewcommand\color[2][]{}%
  }%
  \providecommand\transparent[1]{%
    \errmessage{(Inkscape) Transparency is used (non-zero) for the text in Inkscape, but the package 'transparent.sty' is not loaded}%
    \renewcommand\transparent[1]{}%
  }%
  \providecommand\rotatebox[2]{#2}%
  \newcommand*\fsize{\dimexpr\f@size pt\relax}%
  \newcommand*\lineheight[1]{\fontsize{\fsize}{#1\fsize}\selectfont}%
  \ifx\svgwidth\undefined%
    \setlength{\unitlength}{2271.481794bp}%
    \ifx\svgscale\undefined%
      \relax%
    \else%
      \setlength{\unitlength}{\unitlength * \real{\svgscale}}%
    \fi%
  \else%
    \setlength{\unitlength}{\svgwidth}%
  \fi%
  \global\let\svgwidth\undefined%
  \global\let\svgscale\undefined%
  \makeatother%
  \begin{picture}(1,0.72088255)%
    \lineheight{1}%
    \setlength\tabcolsep{0pt}%
    \put(0,0){\includegraphics[width=\unitlength,page=1]{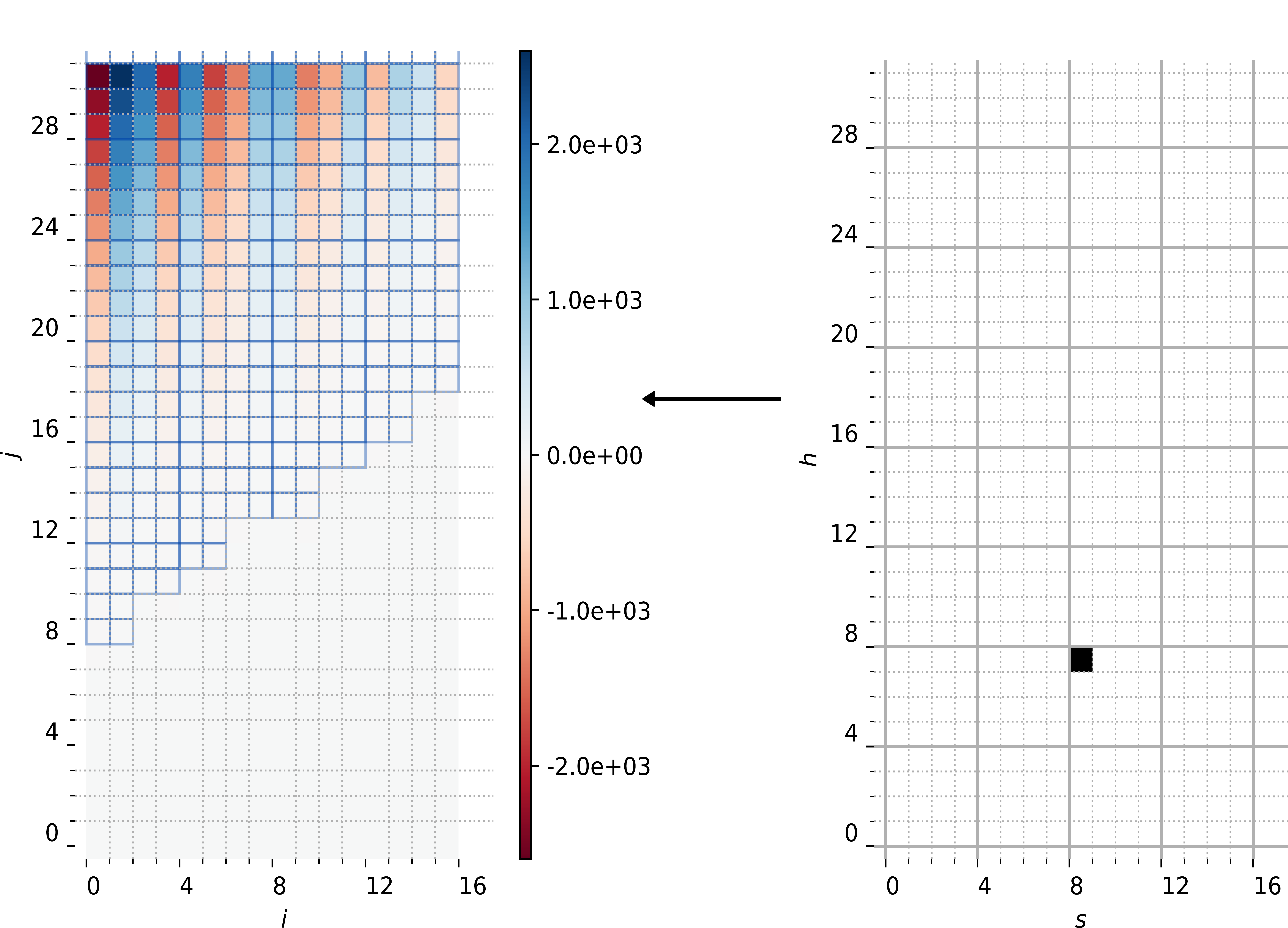}}%
    \put(0.52411958,0.43036438){\color[rgb]{0,0,0}\makebox(0,0)[lt]{\lineheight{1.25}\smash{\begin{tabular}[t]{l}$(R^n)^{-1}$\end{tabular}}}}%
    \put(0.15907806,0.70510182){\color[rgb]{0,0,0}\makebox(0,0)[lt]{\lineheight{1.25}\smash{\begin{tabular}[t]{l}$(R^n)^{-1} [\delta^n_{h,s}]$\end{tabular}}}}%
    \put(0.78747096,0.70510182){\color[rgb]{0,0,0}\makebox(0,0)[lt]{\lineheight{1.25}\smash{\begin{tabular}[t]{l}$\delta^n_{h,s}$\end{tabular}}}}%
    \put(0,0){\includegraphics[width=\unitlength,page=2]{Ridelta-diagram.pdf}}%
    \put(0.22426404,0.09614771){\color[rgb]{0,0,0}\makebox(0,0)[lt]{\lineheight{1.25}\smash{\begin{tabular}[t]{l}nonzero entry\end{tabular}}}}%
  \end{picture}%
\endgroup%
}
\caption{A plot of $(R^n)^{-1}[\delta^n_{h,s}]$ given by the formula \cref{eq:iRndelta}.}
\end{figure}
\begin{proof}
This follows from a repeated application of \cref{lem:delta-1lvl}.
Let us denote
\beq
    \begin{aligned}
    (S_n)^{-1} [\delta^n_{h,s}] &= \sqbrkt{g_0}{g_1}, \\
    (S_{n-1})^{-1} [g_0] &= \sqbrkt{g_{00}}{g_{01}},
    \quad
    (S_{n-1})^{-1} [g_1] = \sqbrkt{g_{10}}{g_{11}}, 
    \\
    &\vdots
    \end{aligned}
\eeq
so that we can denote $g_s$ using the binary expansion $s = (b_0 \cdots b_k)_2$ as follows,
\beq
(S_{n-k})^{-1} [g_{b_0 \cdots b_{k}}] 
= 
(g_{b_0 \cdots b_{k} b_{k+1}}, g_{b_0 \cdots b_{k} b_{k+1}})
\quad
b_k \in \{0,1\}.
\eeq
We proceed by induction. Suppose that it holds
\beq
\begin{aligned}
g_{b_0 \cdots b_{k}} (h',s')
&=
\sigma_{b_0 \cdots b_{k}} 
\delta^{n-k-1}_{s_{b_0 \cdots b_{k}}} (s')
(h' - h_{b_0 \cdots b_{k}} )_+^{k-1}
\\&=
\sigma_{b_0 \cdots b_k} 
\sum_{h \ge h_{b_0 \cdots b_k}} 
\delta^{n-k-1}_{h,s_{b_0 \cdots b_{k}}} (h',s')
(h' - h_{b_0 \cdots b_k} )_+^{k-1}.
\end{aligned}
\eeq
Then,
\beq
\begin{aligned}
&(S_{n-k})^{-1} [g_{b_0 \cdots b_k} ](h',s') \\
&=
(S_{n-k})^{-1} 
\left[
    \sigma_{b_0 \cdots b_k} 
    \sum_{h \ge s_{b_0 \cdots b_k}}
    \delta^{n-1}_{h,s_{b_0 \cdots b_k}} (h',s')
        (h' - h_{b_0 \cdots b_k})_+^{k-1}
\right]
\\&=
    \sum_{j=1}^\infty
    (j)^{k-1}
    (S_{n-k+1})^{-1} 
    \left[
        \sigma_{b_0 \cdots b_k} 
        \delta^{n-1}_{h_{b_0 \cdots b_k} + j -1,s_{b_0 \cdots b_k}} (h',s')
    \right]
\end{aligned}
\eeq
So it follows,
\beq
\begin{aligned}
 g_{b_0 \cdots b_{k+1}}(h',s') &=
    \sum_{j=1}^\infty
    (j)^{k-1}
    \sigma_{b_0 \cdots b_{k+1}}     
    \sum_{j'=1}^\infty
        \delta^{n-1}_{h_{b_0 \cdots b_{k+1}} + j -1 + j' -1,s_{b_0 \cdots b_{k+1}}} (h',s')
\\&=
    \sigma_{b_0 \cdots b_{k+1}} 
    \sum_{j=1}^\infty
    (j^{k-1} \cdot j)
    \delta^{n-1}_{h_{b_0 \cdots b_{k+1}} + j -1,s_{b_0 \cdots b_{k+1}}} (h',s')
\\&=
    \sigma_{b_0 \cdots b_{k+1}}     
    \sum_{h \ge h_{b_0 \cdots b_{k+1}}}
    \delta^{n-1}_{h,s_{b_0 \cdots b_{k+1}}} (h',s')
    (h - h_{b_0 \cdots b_{k+1}})^k_+.
\end{aligned}
\eeq
Now, the case $k =0$ follows by \cref{eq:iSm-delta}, proving the formula.

Since the map $\lambda$ is injective, the formula  \cref{eq:iRndelta} shows that $(R^n)^{-1}$ is injective. Thus $R^n$ is a bijection.
\end{proof}

It is worth noting that the continuous Radon transform does not possess an
analogous property. The continuous inverse is ill-defined when applied to
arbitrary functions in the unconstrained codomain \cite{Natterer}. In contrast,
one can extend the inverse ADRT $(R^n)^{-1}$ from $R^n[\cF^n_0]$ to the codomain
$\cF_+^n$, simply by enlarging the domain to allow infinitely supported images,
that is, by extending $R^n$ from $\cF^n_0$ to $\cF_+^n$. Accordingly, this
property of ADRT is unlikely to be preserved in the continuous limit.

The inverse image $(R^n)^{-1}[\delta^n_{h,s}]$ is plotted in \cref{fig:Snm-delta}. One observes an oscillatory behavior in the $i$-direction, and a polynomial growth in the $j$-direction. The oscillation can be related to the oscillatory behavior of the non-unique solutions to the limited angle problem for the continuous Radon transform \cite[Ch3]{Natterer}; the details of this relation will be pursued elsewhere.

Finally, we give a more explicit expression for the function $\lambda$.

\begin{corollary}
Letting $s^{(k)} := \lfloor s / 2^k \rfloor$, $s_r^{(k)} := (s^{(k-1)} \brem
2)$, if $t \in I_{2^n}$ has the binary representation $t = (b_1 b_2 ...
b_n)_2$, then $h_t,\sigma_t$ above in \cref{eq:lambda-t} can be written as
\beq
    h_t = h + \sum_{k=1}^n (b_k s^{(k)} + (1 - b_k) s^{(k)}_r),
    \quad
    \sigma_t = (-1)^{\bitsum (t \bitxor s)},
\eeq
where $\bitsum$ is the bit-wise \emph{sum}, and $\bitxor$ is the bit-wise
\emph{xor}.
\end{corollary}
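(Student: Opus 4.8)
The plan is to induct on the number $k$ of applications of $\lambda$, that is, on the depth in the binary tree generated by the iterated tensor product in \cref{eq:lambda-t}, and to follow the three coordinates of the triple $\lambda_{b_1 \cdots b_k} = (h_{b_1 \cdots b_k}, s_{b_1\cdots b_k}, \sigma_{b_1 \cdots b_k})$ separately. The inductive hypothesis I would carry is
\beq
    s_{b_1 \cdots b_k} = s^{(k)},
    \qquad
    \sigma_{b_1 \cdots b_k} = (-1)^{\sum_{j=1}^{k} (s^{(j)}_r + b_j)},
\eeq
together with $h_{b_1\cdots b_k}$ equal to the sum in the statement truncated to its first $k$ terms. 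The base case $k=0$ is the root triple $(h,s,+1)$, for which $s^{(0)} = s$ and the empty sums make all three claims immediate.

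For the inductive step I would apply $\lambda$ once more to $\lambda_{b_1\cdots b_k}$ and read each coordinate off \cref{eq:lambda-def}. The slope is the most transparent: both children carry slope $\lfloor s^{(k)}/2\rfloor = s^{(k+1)}$, which advances the slope-claim. For the height I would use that $s^{(k)} = \lfloor s/2^k\rfloor$ is $s$ shifted right by $k$ bits and that $s^{(k)}_r = s^{(k-1)} \brem 2$ is the $(k-1)$-st bit of $s$; the $b_{k+1}=0$ branch then adds $s^{(k)} \brem 2 = s^{(k+1)}_r$, whereas the $b_{k+1}=1$ branch adds $\lfloor s^{(k)}/2\rfloor = s^{(k+1)}$, matching the summand $b_{k+1} s^{(k+1)} + (1-b_{k+1}) s^{(k+1)}_r$. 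The delicate point in the height accounting is the unit shift carried by $\lambda_1$ in \cref{eq:lambda-def}, which must be tracked consistently through every $1$-branch.

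The sign is where the real work lies, and I expect it to be the main obstacle. Each application multiplies $\sigma$ by $(-1)^{s\brem 2}$ on the $0$-branch and by $(-1)^{(s\brem 2)+1}$ on the $1$-branch, so the factor introduced at step $k+1$ is uniformly $(-1)^{s^{(k+1)}_r + b_{k+1}}$; telescoping over all $n$ levels and using $\sum_{k=1}^n s^{(k)}_r = \bitsum(s)$ and $\sum_{k=1}^n b_k = \bitsum(t)$ for $s,t \in I_{2^n}$ yields $\sigma_t = (-1)^{\bitsum(s)+\bitsum(t)}$. The final identification with the claimed $(-1)^{\bitsum(t \bitxor s)}$ then rests on the parity relation
\beq
    \bitsum(s) + \bitsum(t) = \bitsum(s \bitxor t) + 2\,\bitsum(s \bitand t) \equiv \bitsum(t \bitxor s) \pmod 2,
\eeq
i.e. the sum of the two bit-counts agrees with the xor bit-count modulo two because coincident one-bits contribute an even surplus. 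Collecting the three coordinate claims at $k=n$ then gives the stated closed forms for $h_t$ and $\sigma_t$.
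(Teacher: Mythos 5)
Your overall strategy --- inducting on the depth of the $\lambda$-tree, tracking the three coordinates of each triple separately, telescoping the sign factors level by level, and closing with the parity identity $\bitsum(s)+\bitsum(t)\equiv\bitsum(s\bitxor t)\pmod 2$ --- is the natural one, and your slope and sign arguments are correct as written. (For reference, the paper states this corollary without any proof, so there is nothing to compare against; the question is only whether your argument stands on its own.) It does not: the height step fails at exactly the point you yourself flag as ``delicate.'' By \cref{eq:lambda-def}, the $1$-branch taken at step $k+1$ increments the height by $\lfloor s^{(k)}/2\rfloor+1=s^{(k+1)}+1$, not by $s^{(k+1)}$ as you assert, and the unit shift you promise to ``track consistently through every $1$-branch'' is never actually tracked --- it is silently dropped. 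Nothing cancels it, so a faithful induction yields
\beq
  h_t
  = h + \sum_{k=1}^n \bigl(b_k\,(s^{(k)}+1) + (1-b_k)\,s^{(k)}_r\bigr)
  = h + \bitsum(t) + \sum_{k=1}^n \bigl(b_k\,s^{(k)} + (1-b_k)\,s^{(k)}_r\bigr),
\eeq
which exceeds the displayed formula by $\bitsum(t)$, the number of $1$-branches taken.

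This is not a gap you can patch inside the induction, because the formula as printed and the recursion \cref{eq:lambda-def} are genuinely inconsistent; a one-level check already shows it: for $n=1$, $t=1$ the corollary gives $h_1 = h+\lfloor s/2\rfloor$, while $\lambda_1$ gives $h+\lfloor s/2\rfloor+1$. The paper's own worked computation in the conditional-convergence subsection carries the $+1$'s --- there, $(h,s)=(-2^n+1,\,2^n-1)$ produces $h_{2^n-1} = -2^n+1+\sum_{k=1}^n[(2^{k-1}-1)+1] = 0$, whereas the corollary's displayed formula would give $h_{2^n-1}=-n$. So either the corollary must be read with $b_k(s^{(k)}+1)$ in the summand (equivalently, with an extra $\bitsum(t)$), in which case your induction goes through once the height bookkeeping is corrected as above, or else the statement you are proving is false as printed and no proof can close. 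Either way, your writeup asserts a match in the height accounting that does not hold, and that assertion is the crux of the claim.
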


\subsection{Conditional convergence for finitely supported images}

The fact that $R^n$ is a bijection from $\cF^n_+$ onto itself is dependent on the unboundedness of $\supp f$ for $f \in \cF^n_+$. The following observation shows that for $R^n$ to be bijective the sums \cref{eq:inv-formula} appearing in the inversion formula must converge conditionally, as the summands exhibit polynomial growth. Moreover, the degree of the polynomial growth depends on $n$, and hence the inverse diverges as $N \to \infty$.

Let $\indic_k := \indic_{I_{(k+1) 2^n} \times I_{2^n}}$ with $k \in \NN$ denote the cut-off function supported in a rectangle of width $2^n$. Then, 
\[
    \Norm{ R^n \indic_k (R^n)^{-1}  - \Id }{\infty} \gtrsim k^n
    \quad 
    k \ge 1,
\]
where $\Norm{\cdot}{\infty}$ denotes for $B : \cF^n_+ \to \cF^n_+$ the induced norm  $\Norm{B}{\infty} := \sup_{f \in \cF^n_+} \frac{\Norm{B f}{\infty}}{\Norm{f}{\infty}}$ with $\Norm{f}{\infty} := \sup_{(h,s) \in \ZZ \times I_{2^n}} |f(h,s)|$.

This can be seen as follows. Choose $\delta^n_{h,s}$ with $(h,s) = (-2^n+1, 2^n-1)$, and let $[h_{2^n -1}, s_{2^n-1}, \sigma_{2^n-1}]$ be defined as in \cref{eq:lambda-t}. We then have
\beq
    \begin{aligned}
    h_{2^n-1} 
    = h_0 
        + 
      \sum_{k=1}^n 
        \left(\left\lfloor \frac{s_k}{2} \right\rfloor +1 \right)
    = 
    -2^n + 1 + \sum_{k=1}^n [(2^{k-1} - 1) + 1] = 0,
    \end{aligned}
\eeq
where $s_{2^n-1} = 0$, $\sigma_{2^n-1} = 1$.

Consider the digital line $D^n\brkt{(k+1)2^n}{2^n-1}$. By \cref{eq:iRndelta}, $h' = (k+1)2^n$, $s' = 2^n-1$,
\[
    \left\lvert (R^n)^{-1} [\delta^n_{h,s}] (h', s') \right\rvert
    =
    ( (k+1) 2^n )^n 
    \ge 
    2^{2n} (k+1)^n
    \gtrsim k^n.
\]
In contrast, $\Norm{\indic_{k-1} \cdot (R^n \indic_k \cdot (R^n)^{-1} - \Id )}{\infty} = 0$ for $k \ge 1$, from the fact that all the digital lines that correspond to the points in  $\supp (\indic_{k-1} \cdot R^n)$ lie within $\supp \indic_k$.

\subsection{Localization Lemma} To characterize the range of the images in $f \in \cF^n_0$, we first prove some lemmas. Let us define the set, for each $m = 1, ... , n$,
\beq 
    E^n_{m,\ell}
    :=
    \Big\{
      (h,s) \subset \ZZ \times I_{2^n} \,|\,
      - (s \brem 2^m) \le h \le 2^n-1, \,
      s - \ell 2^m \in I_{2^m}
    \Big\},
    \label{eq:Enml}
\eeq
and define $E^n_{0,\ell} := I_{2^n} \times \{\ell\}$. We will denote $E^n := E^n_{n,0}$. and let $E^n_m := \bigcup_{\ell \in I_{2^{n-m}}} E^n_{m,\ell}$. A depiction of these sets appear in \cref{fig:Enml}.

\begin{figure}
\centering
\begin{tabular}{ccc}
\includegraphics[width=0.3\textwidth]{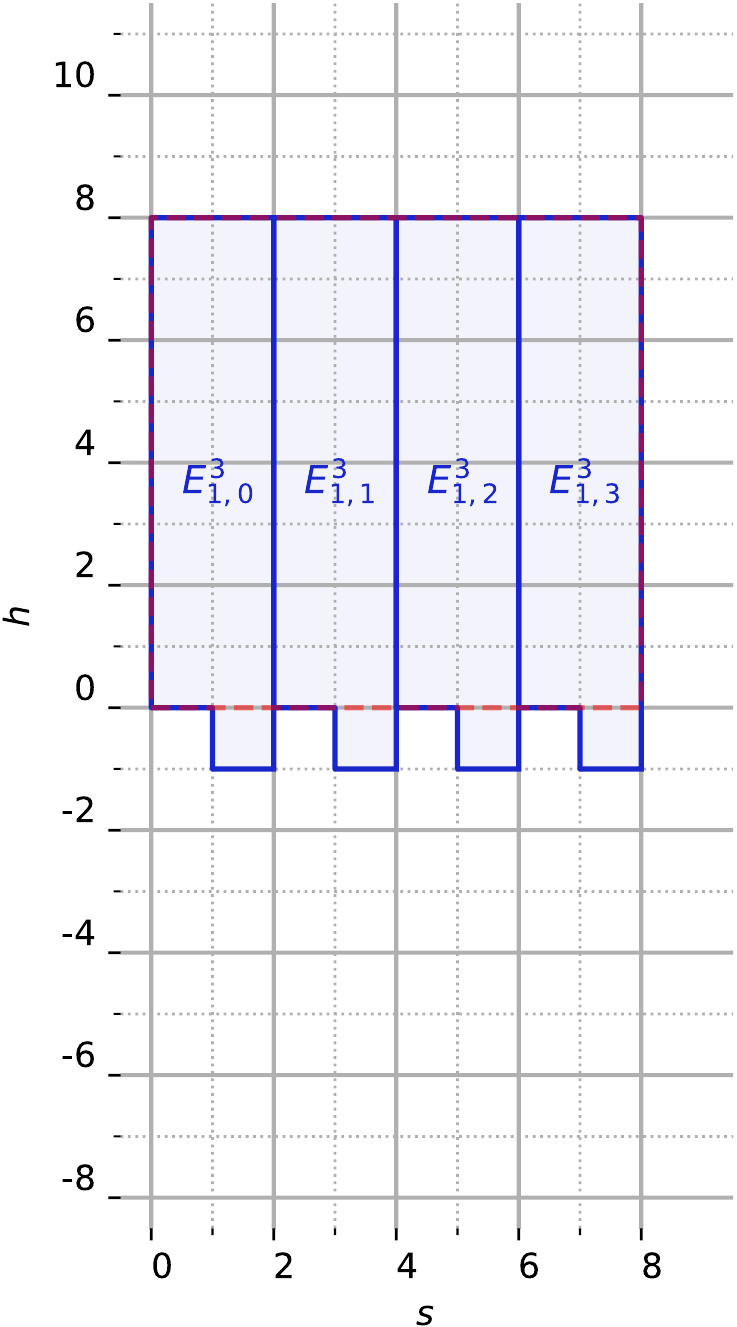}
&
\includegraphics[width=0.3\textwidth]{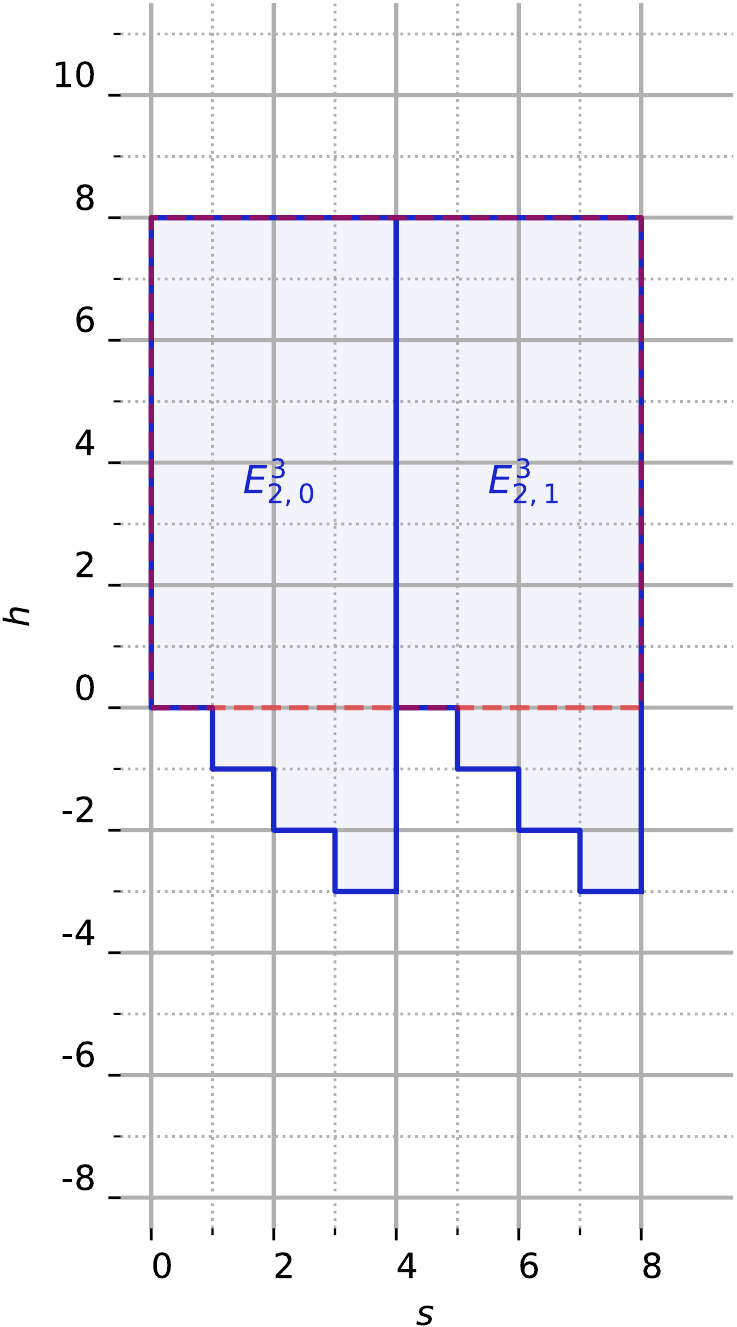}
&
\includegraphics[width=0.3\textwidth]{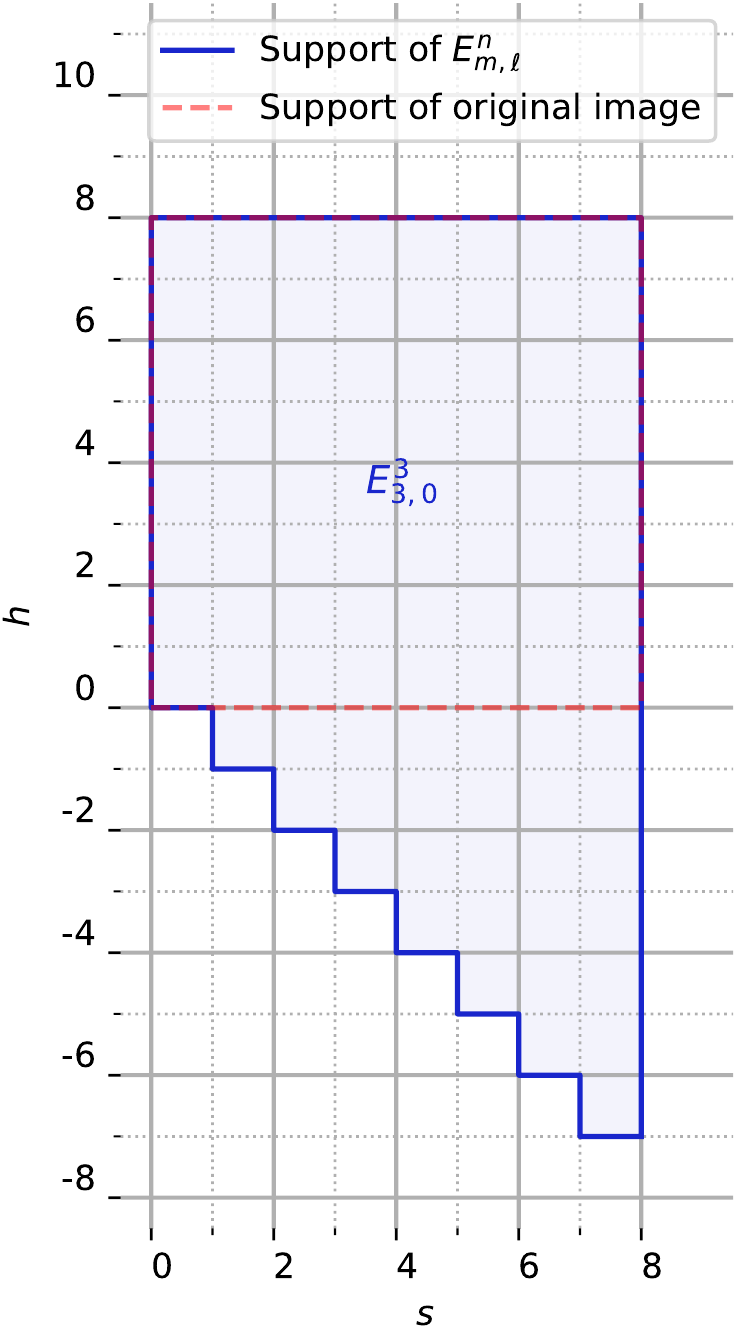}
\end{tabular}
\caption{Diagram depicting the set $E^n_{m,\ell}$ \cref{eq:Enml}.}
\label{fig:Enml}
\end{figure}
 
The following lemma states that if $f \in \cF^n_0$ then $\supp R^n_{m,\ell}[f] \subset E^n_{m,\ell}$, so that the support of $R^n_{m,\ell}[f]$ is restricted in a specific manner.

\begin{lemma}[{Support of $R^n_m[f]$ for $f \in \cF^n_0$}] \label{lem:suppRnmF0}
~  \vspace{-0.1cm}
\begin{enumerate}[label=(\roman*)]
\item If $g \in \cF^n$ with $\supp g \subset \nmlz{E}{m}{2\ell} \cup \nmlz{E}{m}{2\ell+1}$ then $\nm{S}[g] \subset \nmlz{E}{m+1}{\ell}$. 
\item If $f \in \cF^n_0$, it holds that $\supp R^n_m[f] \subset E^n_m$.
\end{enumerate}
\end{lemma}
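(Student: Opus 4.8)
The plan is to obtain part~(ii) by induction on $m$, with part~(i) serving as the inductive step, and to prove part~(i) by a direct case analysis on the elementary merge~\cref{eq:Sm}.

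For the base case $m=0$ of part~(ii), observe that $D^n_0\brkt{h}{s}=\{(h,s)\}$, so $R^n_0=\Id$ and $\supp R^n_0[f]=\supp f\subset I_{2^n}\times I_{2^n}$ for $f\in\cF^n_0$. Since $E^n_{0,\ell}=I_{2^n}\times\{\ell\}$, we have $E^n_0=\bigcup_{\ell\in I_{2^n}}E^n_{0,\ell}=I_{2^n}\times I_{2^n}$, so the inclusion holds. For the step from $m$ to $m+1$, I would use $R^n_{m+1}[f]=S^n_{m+1}[R^n_m[f]]$ together with the fact that $S^n_{m+1}$ acts section-by-section: the $(m+1,\ell)$-section of the output is the merge of the $(m,2\ell)$- and $(m,2\ell+1)$-sections of $R^n_m[f]$. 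The induction hypothesis $\supp R^n_m[f]\subset E^n_m$ says precisely that these two input sections are supported in $E^n_{m,2\ell}$ and $E^n_{m,2\ell+1}$, so part~(i) gives that the $(m+1,\ell)$-section of the output lies in $E^n_{m+1,\ell}$. Taking the union over $\ell\in I_{2^{n-m-1}}$ then yields $\supp R^n_{m+1}[f]\subset E^n_{m+1}$.

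The crux is part~(i), which I would first reduce to a statement about a single section. Rewriting $E^n_{m,\ell}$ in section coordinates, a point $(h,s)$ in block $\ell$ has $s\brem 2^m=s-\ell 2^m=:s'$, so in those coordinates $E^n_{m,\ell}$ becomes the $\ell$-independent region $\{(h,s'):-s'\le h\le 2^n-1,\ s'\in I_{2^m}\}$. Part~(i) then amounts to: if the two inputs $f_0,f_1$ of the merge are each supported in $\{-t\le h\le 2^n-1,\ t\in I_{2^m}\}$, then their merge is supported in $\{-s\le h\le 2^n-1,\ s\in I_{2^{m+1}}\}$. I would read the four contributions off~\cref{eq:Sm}: at even output slope $s=2t$ the value at height $h$ draws on $f_0(h,t)$ and $f_1(h+t,t)$, and at odd output slope $s=2t+1$ it draws on $f_0(h,t)$ and $f_1(h+t+1,t)$. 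For each, a nonzero summand forces a range for $h$ --- for instance $f_1(h+t,t)\ne 0$ forces $-t\le h+t\le 2^n-1$, i.e.\ $-2t\le h\le 2^n-1-t$ --- and I would check that this range sits inside the required band $-s\le h\le 2^n-1$. The upper bound $h\le 2^n-1$ is immediate because every height shift in~\cref{eq:Sm} is nonnegative, while the lower bound holds because slope-doubling ($s=2t$ or $2t+1$) exactly absorbs the height shifts $+t$ and $+t+1$ of the $f_1$ terms.

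The only real obstacle is the bookkeeping in part~(i): one must track the slope parity ($s=2t$ versus $s=2t+1$), the two summands with their differing height shifts, and the passage from the level-$m$ slope $t$ to the level-$(m+1)$ slope $s$ when verifying $-s\le h$. This is elementary but must be done with care, since the lower bound is the constraint that genuinely tightens at each merge, whereas the upper bound and the slope-block membership are preserved automatically. Throughout, the elementary merge I use is the level-$(m+1)$ operator of~\cref{eq:Sm}, which combines two width-$2^m$ sections into a single width-$2^{m+1}$ section, matching the recursion $R^n_{m+1}=S^n_{m+1}\circ R^n_m$.
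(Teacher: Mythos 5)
Your proposal is correct and takes essentially the same route as the paper: part (i) is a direct computation on the merge \cref{eq:Sm} (the paper carries out the same four-term case analysis, just written as an explicit expansion in Kronecker deltas and reindexed so the bound $-s \le h \le 2^n-1$ is visible), and part (ii) iterates (i) along the factorization $R^n_{m+1} = S^n_{m+1} \circ R^n_m$, which the paper compresses into ``follows from (i), since $R^n_m[f]$ is just repeated compositions'' and which you spell out as an induction with base case $R^n_0 = \Id$. Your reading of the merge as the level-$(m+1)$ operator (combining two width-$2^m$ sections into one width-$2^{m+1}$ section) is the correct resolution of the paper's off-by-one in the lemma's notation, and your verified support bands coincide with the paper's computation.
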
 

\begin{proof}
\begin{enumerate}[label=(\roman*)]
\item By the definition of $S^n_m$,
\[
    \begin{aligned}
    S^n_m [g]
    &=
    \sum_{(h,s + (2\ell) 2^m) \in \nmlz{E}{m}{2\ell}}
    g(h,s) \delta^n_{h,2s+1 + \ell 2^{m+1}}
    \\&\quad +
    \sum_{(h,s+(2\ell+1) 2^m) \in \nmlz{E}{m}{2\ell+1}}
    g(h+s+1,s) \delta^n_{h,2s+1 + \ell 2^{m+1}}
    \\&\quad +
    \sum_{(h,s+(2\ell)2^m) \in \nmlz{E}{m}{2\ell}}
    g(h,s) \delta^n_{h,2s + \ell 2^{m+1}}
    \\&\quad +
    \sum_{(h+s,s+(2\ell+1) 2^m) \in \nmlz{E}{m}{2\ell+1}}
    g(h+s,s) \delta^n_{h,2s + \ell 2^{m+1}}.
    \end{aligned}
\]
Then looking into the first two terms,
\[
    g_0
    :=
    \sum_{s=0}^{2^{m+1}-1}
    \sum_{h=-s}^{2^n-1}
    \left[
        g(h, s + (2\ell) 2^m) + g(h+s, s+(2\ell+1)2^m) 
    \right]
    \delta^n_{h, 2s + 1 + \ell 2^m},
\]
therefore $\supp g_0 \subset \nmlo{E}$. A similar calculation for the latter two terms which we let $g_1$ shows that $\supp g_1 \subset \nmlo{E}$. 
\item Follows from (i), since $\nm{R}[f]$ is just repeated compositions of $S^n_m$.
\end{enumerate}
\end{proof}

Owing to the two types of sums that describe $S_m$ in \cref{eq:Sm}, the map $S^n_m[g]$ for $g \in \cF^n$ and $\supp g \in E^n_m$ can also be written as a linear sum of two types of images, which we now describe. Let us define the functions for each $q \in I_{2^{m-1}}$,
\beq
    \begin{aligned}
    \Phi^n_{m,\ell,q}
    &:=
    \{\phi^n_{m,\ell,p,q} \}_{p= - 2q}^{2^n - 1},
    &
    \phi^n_{m,\ell,p,q} 
    &:=
    \delta_{p,2q +\ell 2^m}^n + \delta^n_{p,2q+1 + \ell 2^m}.
    \\
    \Psi^n_{m,\ell,q}
    &:=
    \{\psi^n_{m,\ell,p,q} \}_{p=-2q}^{2^n - 1},
    &
    \psi^n_{m,\ell,p,q}
    &:=
    \delta_{p,2q+\ell 2^m}^n + \delta^n_{p-1,2q+1+\ell 2^m}.
    \end{aligned}
    \label{eq:PhiPsi}
\eeq
Then define their collections, for $\ell \in I_{2^{n-m}}$,
\beq
    \begin{aligned}
    \Phi^n_{m,\ell} &:= \bigcup_{q \in I_{2^{m-1}}} 
                       \Phi^n_{m,\ell,q},
    &
    \Phi^n_m &:= \bigcup_{\ell \in I_{2^{n-m}}} \Phi^n_{m,\ell},
    \\
    \Psi^n_{m,\ell} &:= \bigcup_{q \in I_{2^{m-1}}}
                       \Psi^n_{m,\ell,q},
    &
    \Psi^n_m &:= \bigcup_{\ell \in I_{2^{n-m}}} \Psi^n_{m,\ell}.
    \end{aligned}
\eeq
Then we have $\supp \nml{\Phi}{\ell}, \supp \nml{\Psi}{\ell} \in
\nml{E}{\ell}$, where $\supp \Phi := \bigcup_{\phi \in \Phi} \supp \phi$.
 
\begin{lemma}[Linear independence and orthogonal relations] 
\label{lem:linindep}~
\begin{enumerate}[label=(\roman*)]
\item $\Phi^n_m$ and $\Psi^n_m$ are each orthogonal individually.  
\item $\Phi^n_{m,\ell,q} \perp \Psi^n_{m,\ell,q'}$, whenever $q \ne q'$.
\item $\Phi^n_{m,\ell,q} \cup \Psi^n_{m,\ell,q}$ is a linearly independent set.
\end{enumerate}
\end{lemma}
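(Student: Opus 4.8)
The plan is to handle (i) and (ii) by pure support bookkeeping and to reserve the actual argument for (iii), where the two families genuinely interact. Since each $\phi^n_{m,\ell,p,q}$ and each $\psi^n_{m,\ell,p,q}$ is a sum of two Kronecker deltas with coefficient $+1$, two such functions are orthogonal under the dot product $f\cdot g=\sum_{(i,j)}f(i,j)g(i,j)$ exactly when their supports are disjoint. Writing $j_0:=2q+\ell 2^m$ and $j_1:=j_0+1$, the function $\phi^n_{m,\ell,p,q}$ is supported on $\{(p,j_0),(p,j_1)\}$ and $\psi^n_{m,\ell,p,q}$ on $\{(p,j_0),(p-1,j_1)\}$. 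First I would record the block structure of the $j$-coordinates: as $q$ ranges over $I_{2^{m-1}}$ and $\ell$ over $I_{2^{n-m}}$, the pair $\{j_0,j_1\}$ is a consecutive even--odd pair lying in the block $[\ell 2^m,(\ell+1)2^m-1]$, and distinct $(\ell,q)$ produce disjoint pairs (distinct $\ell$ give disjoint blocks, while equal $\ell$ with distinct $q$ give disjoint even--odd pairs inside one block).

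This structure gives (i) at once: two members of $\Phi^n_m$ (or of $\Psi^n_m$) with distinct $(\ell,q)$ have disjoint $j$-supports, and those with equal $(\ell,q)$ but distinct $p$ live in distinct columns $i=p$; either way the supports are disjoint, hence the family is orthogonal. It likewise gives (ii): when $q\neq q'$ the $j$-coordinate pairs attached to $\Phi^n_{m,\ell,q}$ and to $\Psi^n_{m,\ell,q'}$ are disjoint, so every such $\phi$ and $\psi$ are orthogonal regardless of their $i$-coordinates.

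The substance is (iii). I would fix $(m,\ell,q)$, observe that every element of $\Phi^n_{m,\ell,q}\cup\Psi^n_{m,\ell,q}$ is supported on the two columns $j_0,j_1$, and test a vanishing combination $\sum_p a_p\,\phi^n_{m,\ell,p,q}+\sum_p b_p\,\psi^n_{m,\ell,p,q}=0$ with $p$ ranging over $-2q\le p\le 2^n-1$, reading off the equations coordinate by coordinate. Column $j_0$ yields $a_i+b_i=0$ for each admissible $i$. In the interior of column $j_1$ the only contributors at $(i,j_1)$ are $\phi_i$ and $\psi_{i+1}$ (since $\psi$ is supported at $(p-1,j_1)$), giving $a_i+b_{i+1}=0$ for $-2q\le i\le 2^n-2$. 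Eliminating the $a$'s forces $b_{i+1}=b_i$, so all the $b_p$ share a common value. The two extreme rows of column $j_1$ then close the argument: at $i=-2q-1$ only $\psi_{-2q}$ appears, so $b_{-2q}=0$, whence every $b_p=0$ and therefore every $a_p=0$; the row $i=2^n-1$, where only $\phi_{2^n-1}$ appears, is then consistent as $a_{2^n-1}=-b_{2^n-1}=0$.

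The main obstacle, and the only place requiring care, is the bookkeeping at the two boundary rows of column $j_1$. It is precisely the horizontal shift in $\psi$ (support at $(p-1,j_1)$ rather than $(p,j_1)$) that produces the telescoping relation $b_{i+1}=b_i$, and it is the \emph{finiteness} of the index range $-2q\le p\le 2^n-1$ that supplies the unmatched endpoint $\psi_{-2q}$ forcing the common value to vanish. I would therefore verify the index ranges of the interior and boundary equations explicitly, confirming that no hidden consistency condition is overlooked and that the endpoint equations genuinely lie outside the interior telescoping range.
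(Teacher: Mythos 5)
Your proof is correct. Parts (i) and (ii) coincide with the paper's own argument (pure support disjointness), though one phrase needs tightening: members of $\Psi^n_m$ with equal $(\ell,q)$ and distinct $p$ do \emph{not} live in distinct columns --- in your notation $\psi^n_{m,\ell,p,q}$ occupies columns $p$ and $p-1$, so consecutive $\psi$'s share a column --- but their supports are still disjoint because the shared column is occupied at different $j$-coordinates ($j_0$ for one, $j_1$ for the other), so the conclusion is unaffected. For part (iii) you take a genuinely different route. The paper forms the Gram matrix $\bfA$ of the suitably interleaved family $(\dots,\psi_p,\phi_p,\psi_{p+1},\dots)$, observes that it is tridiagonal with $2$ on the diagonal and $1$ on the off-diagonals, and concludes non-singularity from diagonal dominance; you instead test a vanishing combination and eliminate: column $j_0$ gives $a_i+b_i=0$, the interior of column $j_1$ gives $a_i+b_{i+1}=0$, telescoping makes all $b_p$ equal, and the unmatched boundary equation $b_{-2q}=0$ forces everything to vanish. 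Your version is more elementary and in one respect sharper: the paper's Gram matrix is only \emph{weakly} diagonally dominant in its interior rows ($2=1+1$), so the bare assertion ``diagonally dominant and therefore non-singular'' needs a supplement (irreducible diagonal dominance, or positive definiteness of the tridiagonal Toeplitz matrix); the strictness that rescues that argument sits precisely in the two boundary rows, which is the same structural feature --- the unmatched endpoint $\psi_{-2q}$ --- that your telescoping step exploits explicitly. What the Gram-matrix route buys is compactness and spectral information about the family, in the spirit of the paper's stability concerns; what yours buys is a self-contained argument that is, in effect, the back-substitution viewpoint the paper itself adopts immediately afterwards when it remarks that the inversion formula is a backward substitution applied to this very linear system.
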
 
\begin{proof} ~
    \begin{enumerate}[label=(\roman*)]
    \item Whenever $(\ell,p,q) \ne (\ell',p',q')$, we have $\supp \phi^n_{m,\ell,p,q}$ and $\supp \phi^n_{m,\ell',p',q'}$ do not intersect, so that $\phi^n_{m,\ell,p,q} \cdot \phi^n_{m,\ell',p',q'} = 0$. So $\Phi^n_{m}$ is orthogonal, and by the same argument $\Psi^n_m$ is also.
    \item Since $\supp \phi^n_{m,\ell,p,q}, \supp \psi^n_{m,\ell,p,q} \subset \ZZ \times \{ 2p + \ell 2^m, 2p + 1 + \ell 2^m\}$, it follows that $\supp \phi^n_{m,\ell,p,q} \cap \supp \psi^n_{m,\ell',p',q'} = \emptyset$ whenever $q \ne q'$. So the result follows.
    \item Due to (i) and (ii), it only remains to show that $\Phi^n_{m,\ell} \cup \Psi^n_{m,\ell}$ form a linearly independent set. Let $J = 2(2^n+(2q \brem 2^{m-1}))$. We form the real matrix $\bfA \in \RR^{J \times J}$ with entries given by 
\beq
    \bfA_{i,j} 
    = \xi_i \cdot \xi_j,
    \quad i,j \in I_{J},
    \where
    \left\{
    \begin{aligned}
    \xi_{2j} &= \phi^n_{m,\ell,p,j},\\
    \xi_{2j+1} &= \psi^n_{m,\ell,p,j},
    \end{aligned}
    \right.
    \quad
    j \in I_{J/2}.
\eeq
It is straightforward to see that $\bfA_{i,j} = 2$ when $i=j$, and $\bfA_{i,j} = 1$ when $|i-j| = 1$. So $\bfA$ is diagonally dominant and therefore non-singular, showing that $\{\xi_j\}_{j=1}^{J} = \Phi^n_{m,\ell,q} \cup \Psi^n_{m,\ell,q}$ is linearly independent.
    \end{enumerate}
\end{proof}
\begin{figure}
\centering
\def\svgwidth{0.9\textwidth}
{\scriptsize 
\begingroup%
  \makeatletter%
  \providecommand\color[2][]{%
    \errmessage{(Inkscape) Color is used for the text in Inkscape, but the package 'color.sty' is not loaded}%
    \renewcommand\color[2][]{}%
  }%
  \providecommand\transparent[1]{%
    \errmessage{(Inkscape) Transparency is used (non-zero) for the text in Inkscape, but the package 'transparent.sty' is not loaded}%
    \renewcommand\transparent[1]{}%
  }%
  \providecommand\rotatebox[2]{#2}%
  \newcommand*\fsize{\dimexpr\f@size pt\relax}%
  \newcommand*\lineheight[1]{\fontsize{\fsize}{#1\fsize}\selectfont}%
  \ifx\svgwidth\undefined%
    \setlength{\unitlength}{637.74163035bp}%
    \ifx\svgscale\undefined%
      \relax%
    \else%
      \setlength{\unitlength}{\unitlength * \real{\svgscale}}%
    \fi%
  \else%
    \setlength{\unitlength}{\svgwidth}%
  \fi%
  \global\let\svgwidth\undefined%
  \global\let\svgscale\undefined%
  \makeatother%
  \begin{picture}(1,0.63223095)%
    \lineheight{1}%
    \setlength\tabcolsep{0pt}%
    \put(0,0){\includegraphics[width=\unitlength,page=1]{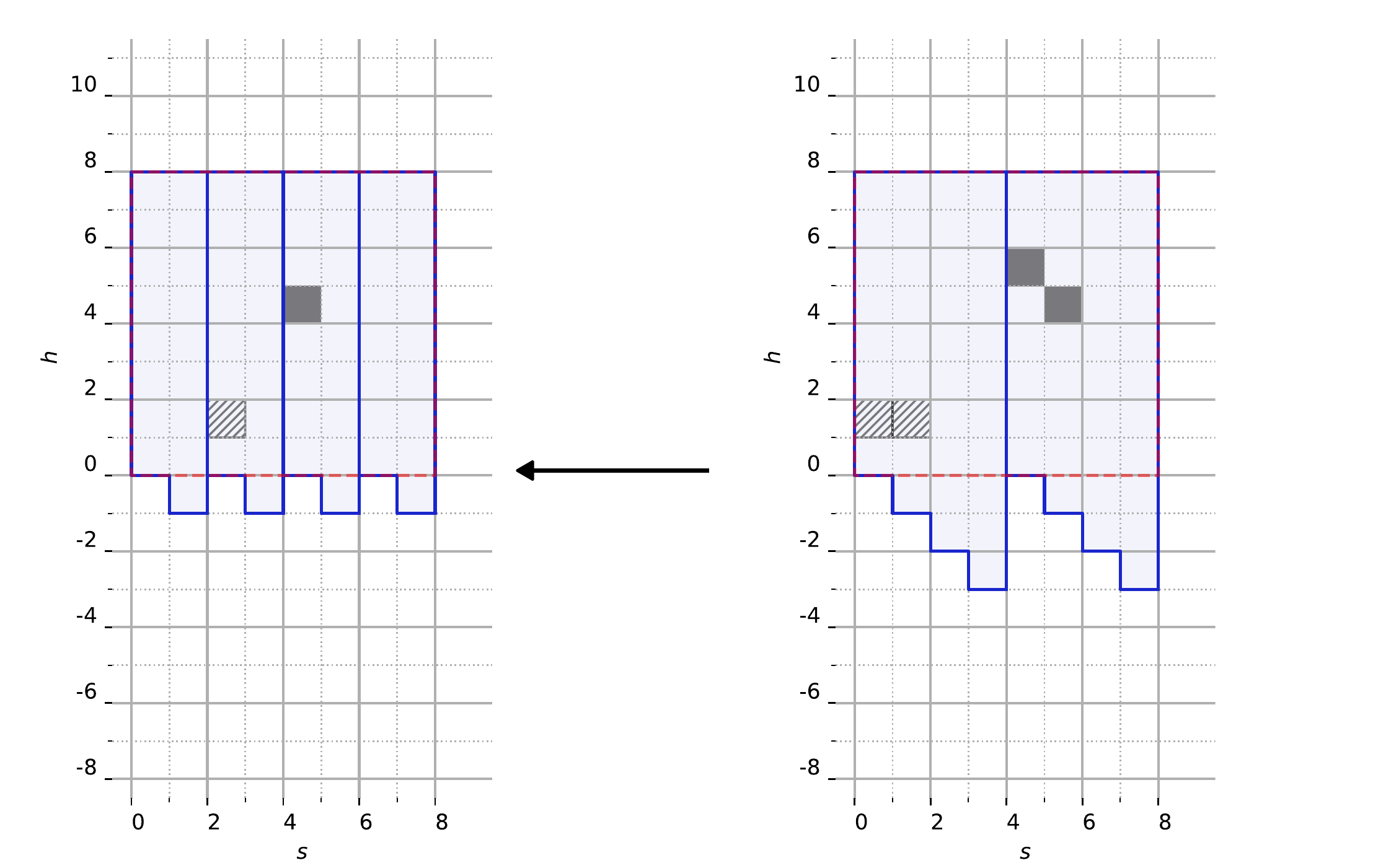}}%
    \put(0.42909718,0.2556857){\color[rgb]{0,0,0}\makebox(0,0)[lt]{\lineheight{1.25}\smash{\begin{tabular}[t]{l}$(S^3_2)^{-1}$\end{tabular}}}}%
    \put(0,0){\includegraphics[width=\unitlength,page=2]{localize-diagram.pdf}}%
    \put(0.42909718,0.32154309){\color[rgb]{0,0,0}\makebox(0,0)[lt]{\lineheight{1.25}\smash{\begin{tabular}[t]{l}$S^3_2$\end{tabular}}}}%
    \put(0.75837783,0.46359732){\color[rgb]{0,0,0.66666667}\makebox(0,0)[lt]{\lineheight{1.25}\smash{\begin{tabular}[t]{l}$\psi^3_{2,1,6,0}$\end{tabular}}}}%
    \put(0.64300191,0.35251958){\color[rgb]{1,0.4,0}\makebox(0,0)[lt]{\lineheight{1.25}\smash{\begin{tabular}[t]{l}$\phi^3_{2,0,1,0}$\end{tabular}}}}%
    \put(0,0){\includegraphics[width=\unitlength,page=3]{localize-diagram.pdf}}%
  \end{picture}%
\endgroup%
}
\caption{A diagram depicting localization (\cref{lem:localize})}
\end{figure}

We will soon see that some of the images in $\nmlo{\Phi}$, $\nmlo{\Psi}$ are not necessary to express $S^n_m[g]$ for $g \in \cF^n$ with $\supp g \subset E^n_{m-1}$. To this end, let us partition $\Phi^n_{m,\ell,q}$ and $\Psi^n_{m,\ell,q}$ each into two sets. For $q \in I_{2^{m-1}}$,
\beq \label{eq:huPhiPsi}
    \begin{aligned}
    \oPhi^n_{m,\ell,q} &:= \{\phi^n_{m,\ell,p,q}\}_{p=-q}^{2^n-1},
    &
    \oPsi^n_{m,\ell,q} &:= \{\psi^n_{m,\ell,p,q}\}_{p=-2q}^{2^n-q-1},
    \\
    \uPhi^n_{m,\ell,q} &:= \{\phi^n_{m,\ell,p,q}\}_{p=-2q}^{-q-1},
    &
    \uPsi^n_{m,\ell,q} &:= \{\psi^n_{m,\ell,p,q}\}_{p=2^n-q}^{2^n}.
    \end{aligned}
\eeq
An illustration of the members in $\uPhi^n_m$ and $\uPsi^n_m$ are shown in \cref{fig:uphi-upsi}. 

Similarly as before, we let
\beq
\begin{aligned}
    \oPhi^n_{m,\ell} 
    &:= 
    \bigcup_{q \in I_{2^{m-1}}} \oPhi^n_{m,\ell,q},
    &
    \oPsi^n_{m,\ell} 
    &:= 
    \bigcup_{q \in I_{2^{m-1}}} \oPsi^n_{m,\ell,q},
    \\
    \uPhi^n_{m,\ell} 
    &:= 
    \bigcup_{q \in I_{2^{m-1}}} \uPhi^n_{m,\ell,q},
    &
    \uPsi^n_{m,\ell} 
    &:= 
    \bigcup_{q \in I_{2^{m-1}}} \uPsi^n_{m,\ell,q},
    \\
    \oPhi^n_m 
    &:= 
    \bigcup_{\ell \in I_{2^{m-1}}} \oPhi^n_{m,\ell},
    &
    \oPsi^n_m 
    &:= 
    \bigcup_{\ell \in I_{2^{m-1}}} \oPsi^n_{m,\ell}.
    \\
    \uPhi^n_m 
    &:= 
    \bigcup_{\ell \in I_{2^{m-1}}} \uPhi^n_{m,\ell},
    &
    \uPsi^n_m 
    &:= 
    \bigcup_{\ell \in I_{2^{m-1}}} \uPsi^n_{m,\ell}.
\end{aligned}
\eeq
Let $\oPhi^n := \oPhi^n_n, \oPsi^n := \oPsi^n_n, \uPhi^n := \uPhi^n_n, \uPsi^n := \uPsi^n_n$ as well.

For an image $g \in \cF^n$, if $S^n_m[g]$ belongs to $\Span (\nmlo{\oPhi} \cup \nmlo{\oPsi})$, one can deduce whether the support of $f$ overlaps with $\nml{E}{2\ell}$ or $\nml{E}{2\ell+1}$ by writing $S^n_m[g]$ as a linear sum of members of $\nmlo{\Phi} \cup \nmlo{\Psi}$ and checking if the coefficients corresponding to those of $\nmlo{\Phi}$ or $\nmlo{\Psi}$ vanish.

\begin{lemma}[Localization] \label{lem:localize} 
Let $g = S^n_m [g_0 + g_1]$ where $g_0,g_1 \in \cF^n$, $\supp g_0 \in E^n_{m-1,2\ell}, \supp g_1 \in
E^n_{m-1,2\ell+1}$ then $g \in \Span (\nmlo{\oPhi} \cup \nmlo{\oPsi})$. And furthermore,
\begin{enumerate}[label=(\roman*)]
\item if $g \in \Span \nmlo{\oPhi}$ then $g_1 = 0$,
\item if $g \in \Span \nmlo{\oPsi}$ then $g_0 = 0$.
\end{enumerate}
\end{lemma}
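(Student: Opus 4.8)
The plan is to make explicit the correspondence between the building blocks $\phi^n_{m,\ell,p,q}$, $\psi^n_{m,\ell,p,q}$ and the action of $S_m$ on single Kronecker deltas, to read off the column ranges forced by the $E$-constraints, and finally to invoke the linear-independence structure of \cref{lem:linindep} to separate the two families. First I would compute $S_m[\delta^{m-1}_{a,t},0]$ and $S_m[0,\delta^{m-1}_{a,t}]$ directly from \cref{eq:Sm}. A delta in the even (left) slot produces the value $1$ at $(a,2t)$ and $(a,2t+1)$, i.e.\ exactly a vertical pair $\phi$; a delta in the odd (right) slot produces $1$ at $(a-t,2t)$ and $(a-t-1,2t+1)$, i.e.\ exactly a diagonal pair $\psi$. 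Accounting for the global row offset $\ell 2^m$ of the output section index $\ell$, this gives $S_m[\delta^{m-1}_{a,t},0]=\phi^n_{m,\ell,a,t}$ and $S_m[0,\delta^{m-1}_{a,t}]=\psi^n_{m,\ell,a-t,t}$.

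Next I would use that $\supp g_0 \in E^n_{m-1,2\ell}$ and $\supp g_1 \in E^n_{m-1,2\ell+1}$ lie in disjoint strips, so that the even section of $g_0+g_1$ is $(g_0)^n_{m-1,2\ell}$ and the odd section is $(g_1)^n_{m-1,2\ell+1}$. By linearity, $g=S^n_m[g_0+g_1]$ then expands as $\sum_{(a,t)} g_0(a,t+\ell 2^m)\,\phi^n_{m,\ell,a,t} + \sum_{(a,t)} g_1(a,t+\ell 2^m+2^{m-1})\,\psi^n_{m,\ell,a-t,t}$, with each coefficient a single point value of $g_0$ or of $g_1$. Reading off the constraint defining $E^n_{m-1,2\ell}$, namely $-(s\brem 2^{m-1})\le a\le 2^n-1$ with $t=s\brem 2^{m-1}$, gives $-t\le a\le 2^n-1$, which is precisely the range $p\in[-q,2^n-1]$ (with $q=t$, $p=a$) defining $\oPhi^n_{m,\ell,q}$ in \cref{eq:huPhiPsi}. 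Likewise the constraint from $E^n_{m-1,2\ell+1}$ yields $-t\le a\le 2^n-1$, hence $p=a-t\in[-2t,2^n-t-1]$, the range defining $\oPsi^n_{m,\ell,q}$. Thus every $\phi$ and $\psi$ appearing lies in $\oPhi^n_{m,\ell}$ or $\oPsi^n_{m,\ell}$, proving $g\in\Span(\oPhi^n_{m,\ell}\cup\oPsi^n_{m,\ell})$.

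Finally, for (i)--(ii), I would upgrade \cref{lem:linindep} from its per-$q$ form to the whole collection. By (i) all $\phi$'s are mutually orthogonal and all $\psi$'s are mutually orthogonal, and by (ii) $\phi$'s and $\psi$'s with distinct $q$ are orthogonal; hence the Gram matrix of $\Phi^n_{m,\ell}\cup\Psi^n_{m,\ell}$ is block diagonal in $q$. Since each block $\Phi^n_{m,\ell,q}\cup\Psi^n_{m,\ell,q}$ is linearly independent by (iii), the full set, and therefore its subset $\oPhi^n_{m,\ell}\cup\oPsi^n_{m,\ell}$, is linearly independent. Consequently the expansion of $g$ above is its unique representation in this basis. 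If $g\in\Span\oPhi^n_{m,\ell}$, then matching coefficients forces every $\psi$-coefficient to vanish; since those coefficients are exactly the point values of $g_1$, we conclude $g_1=0$. Symmetrically, $g\in\Span\oPsi^n_{m,\ell}$ forces all $\phi$-coefficients, i.e.\ all values of $g_0$, to vanish, giving $g_0=0$.

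The main obstacle will be the index bookkeeping in the middle step: correctly propagating the offsets $\ell 2^m$ and $2^{m-1}$ together with the $\brem 2^{m-1}$ arithmetic through $E^n_{m-1,2\ell}$ and $E^n_{m-1,2\ell+1}$, so that the resulting inequalities reproduce the truncated ranges of \cref{eq:huPhiPsi} exactly rather than up to an off-by-one. Once the indices are matched cleanly, the separation of $g_0$ and $g_1$ is a formal consequence of the linear independence established above.
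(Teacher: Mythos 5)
Your proposal is correct and follows essentially the same route as the paper's proof: expand $g_0,g_1$ in Kronecker deltas, identify the images of the two slots under $S_m$ with $\phi^n_{m,\ell,p,q}$ and $\psi^n_{m,\ell,p,q}$, track the index ranges through $E^n_{m-1,2\ell}$ and $E^n_{m-1,2\ell+1}$ to land in $\oPhi^n_{m,\ell}\cup\oPsi^n_{m,\ell}$, and invoke linear independence for uniqueness of the expansion. If anything, your explicit block-diagonal Gram-matrix argument upgrading \cref{lem:linindep}(iii) from per-$q$ independence to independence of the full collection is spelled out more carefully than in the paper, which simply asserts that $\oPhi^n_{m,\ell}\cup\oPsi^n_{m,\ell}$ is linearly independent.
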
 

\begin{proof}
Let us write,
\beq
    g_0 = \sum_{(h,s) \in E^m} g_0(h,s) \delta^n_{h,s},
    \qquad
    g_1 = \sum_{(h,s) \in E^m} g_1(h,s) \delta^n_{h,s}.
\eeq
Then by \cref{eq:Sm}, we have
\beq
    \begin{aligned}
    g
    &=
    \sum_{(h,s) \in \nmlz{E}{m}{2\ell}}
    g_0(h,s) \delta^n_{h,2s+1}
    +
    \sum_{(h+s+1,s) \in \nmlz{E}{m}{2\ell+1}}
    g_1(h+s+1,s) \delta^n_{h,2s+1}
    \\&\qquad +
    \sum_{(h,s) \in \nmlz{E}{m}{2\ell}}
    g_0(h,s) \delta^n_{h,2s}
    +
    \sum_{(h+s,s) \in \nmlz{E}{m}{2\ell+1}}
    g_1(h+s,s) \delta^n_{h,2s}
    \\&=
    \sum_{(h,s) \in \nmlz{E}{m}{2\ell}}
    g_0(h,s) (\delta^n_{h,2s} + \delta^n_{h,2s+1})
    \\&\qquad +
    \sum_{(h,s) \in \nmlz{E}{m}{2\ell+1}}
    g_1(h,s) (\delta^n_{h-s,2s} + \delta^n_{h-s-1,2s+1})
    \\&=
    \sum_{s = 0}^{2^m-1} \sum_{h=-s}^{2^n-1}
    g_0(h,s) (\delta^n_{h,2s} + \delta^n_{h,2s+1})
    \\&\qquad +
    \sum_{s = 0}^{2^m-1} \sum_{h=-s}^{2^n-1}
    g_1(h,s) ( \delta^n_{h-s,2s} + \delta^n_{h-s-1,2s+1}),
    \end{aligned}
\eeq
then using the definitions of the basis $\psi$ and $\phi$ \cref{eq:PhiPsi},
\beq
    g
    =
    \sum_{q = 0}^{2^m-1} \sum_{p=-q}^{2^n-1}
    g_0(p,q) \nmlpqz{\phi}{p}{q}
    +
    \sum_{q = 0}^{2^m-1} \sum_{p=-q}^{2^n-1}
    g_1(p,q) \nmlpqz{\psi}{p-q}{q}.
\eeq
Rearranging, we arrive at
\beq
    g
    =
    \sum_{q = 0}^{2^m-1} \sum_{p=-q}^{2^n-1}
    g_0(p,q) \nmlpqz{\phi}{p}{q}
    +
    \sum_{q = 0}^{2^m-1} \sum_{p=-2q}^{2^n-q-1}
    g_1(p,q) \nmlpqz{\psi}{p}{q},
    \label{eq:Fpshi}
\eeq
showing that $g \in \Span (\nmlo{\oPhi} \cup \nmlo{\oPsi})$.  Recall that the set $\nmlo{\oPhi} \cup \nmlo{\oPsi}$ is linearly independent, so the expansion \cref{eq:Fpshi} is unique. Hence, if $f \in \Span \nmlo{\oPhi}$ it must be that $f_0 = 0$. Similarly, if $f \in \Span \nmlo{\oPsi}$ then $f_1 = 0$.
\end{proof}

Note that the inversion formula \cref{eq:inv-formula} is a backward substitution formula applied to the linear system \cref{eq:Fpshi}.

Now, we turn our attention to the space of images $f \in \cF^n$ whose support lies in $E^n_{m,\ell}$ defined for $m \in I_{n+1}$
\beq
    \cG^{n}_{m,\ell} 
    := 
    \{ f \in \cF^n : \supp f \in E^n_{m,\ell} \},
    \quad
    \cG^n_m 
    := 
    \bigcup_{\ell \in I_{2^{n-m}}} \cG^n_{m,\ell}.
    \label{eq:hOmega}
\eeq
Above, we have shown $g \in \cG^n_{m,\ell}$ such that $g = S^n_m[g_0 + g_1]$ for some $g_0 \in \cG^n_{m-1,2\ell}$, $g_1 \in \cG^n_{m-1,2\ell+1}$ to belong to $\Span (\nmlo{\oPhi} \cup \nmlo{\oPsi})$, and therefore to $\Span (\nmlo{\Phi} \cup \nmlo{\Psi})$. Then, it is natural to ask which of those $f \in \cG^n_{m,\ell}$ do \emph{not} belong to $\Span (\nmlo{\Phi} \cup \nmlo{\Psi})$? We answer this question next. 

Let us define $\nmlq{\mu} \in \cF^n$ given by
\beq
    \begin{aligned}
    \nmlq{\mu}(h,s)
    &=
    \begin{cases} 
    1 & \text{ if } -2q \le h \le 2^n - 1 \text{ and } s = 2q + \ell 2^m. \\
    -1 & \text{ if } -2q-1\le h \le 2^n - 1 \text{ and } s = 2q + 1 + \ell 2^m,\\
    0 & \text{ otherwise.}
    \end{cases}\\
    &= 
    \sum_{p = -2q}^{2^n-1} 
    \delta^n_{p,2q + \ell 2^m} (h,s)
    -
    \sum_{p = -2q-1}^{2^n-1} 
    \delta^n_{p,2q + 1 + \ell 2^m}
    \end{aligned}
\eeq
and correspondingly let 
\beq \label{eq:Mnml}
\nml{M}{\ell} := \{ \nmlq{\mu} \}_{q \in I_{2^{m-1}}}. 
\eeq
Note that $\nmlo{M}$ forms an orthogonal set, as $\supp \mu^n_{m,\ell,q} \cap \supp \mu^n_{m,\ell,q'} = \emptyset$ for $q \ne q'$.

\begin{figure}
\centering
    \begin{tabular}{ccc}
    \includegraphics[width=0.3\textwidth]{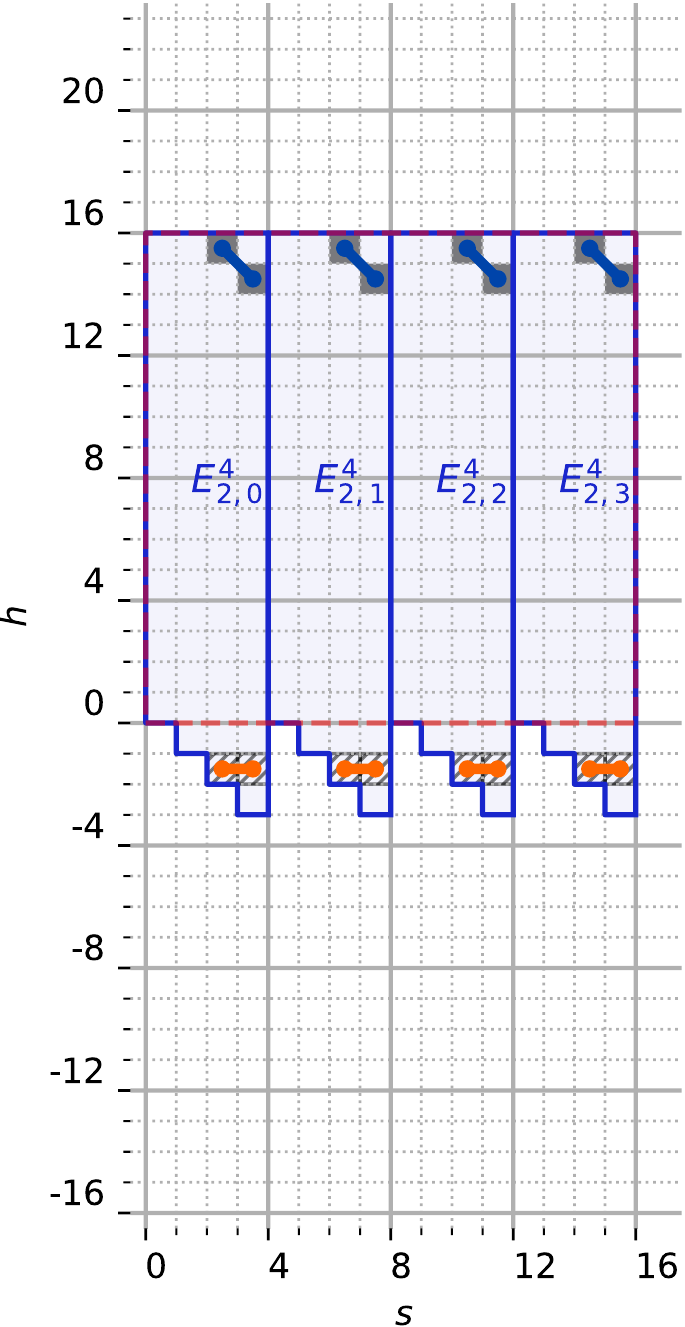}
    &
    \includegraphics[width=0.3\textwidth]{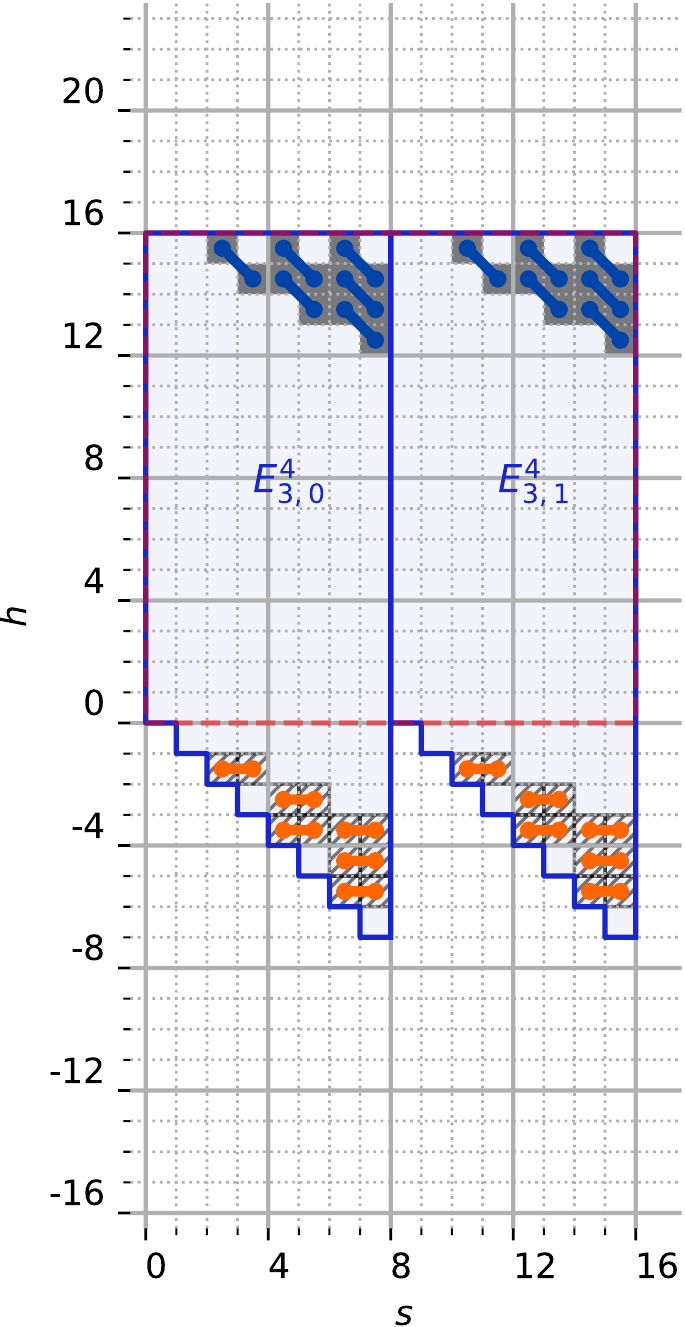}
    &
    \includegraphics[width=0.3\textwidth]{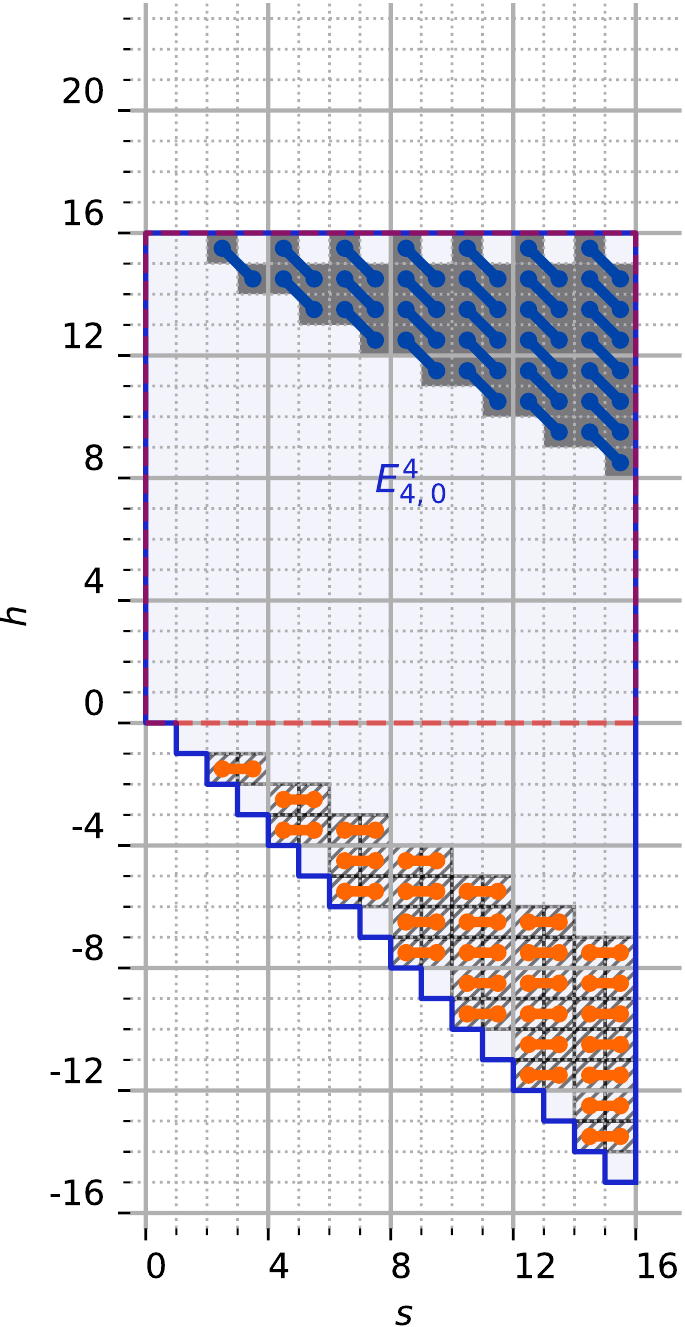}
    \end{tabular}
\caption{An illustration of members of $\uPhi^n_m$ and $\uPsi^n_m$ for $n=4$ with $m=2$ (left), $m=3$ (middle) and $m=4$ (right).}
\label{fig:uphi-upsi}
\end{figure}

\newpage

\begin{lemma}[Mass constraints] \label{lem:mass}
Suppose $g \in \nmlo{\cG}$, then $g \in \Span (\nmlo{\Phi} \cup \nmlo{\Psi})$ if and only if $g \cdot \mu = 0$ for all $\mu \in \nml{M}{\ell}$. Equivalently,
\beq
    \sum_{h=-2t}^{2^n-1} g(h,2t + \ell 2^m)
    =
    \sum_{h=-2t-1}^{2^n-1} g(h,2t+1 + \ell 2^m)
    \quad
    \text{ for } t \in I_{2^{m-1}}.
    \label{eq:mass}
\eeq
\end{lemma}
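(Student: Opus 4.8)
The plan is to exploit the fact that every object in the statement is supported on the $2^m$ rows $s\in\{\ell 2^m,\dots,\ell 2^m+2^m-1\}$, and that these split naturally into the $2^{m-1}$ pairs indexed by $q\in I_{2^{m-1}}$: the even row $s=2q+\ell 2^m$ and the odd row $s=2q+1+\ell 2^m$. For fixed $q$, let $V_q\subset\cF^n$ be the images supported on this pair of rows within the $h$-ranges prescribed by $E^n_{m,\ell}$, namely $-2q\le h\le 2^n-1$ on the even row and $-2q-1\le h\le 2^n-1$ on the odd row. Since rows for distinct $q$ are disjoint, $\cG^n_{m,\ell}=\bigoplus_{q\in I_{2^{m-1}}}V_q$, and moreover $\phi^n_{m,\ell,p,q},\psi^n_{m,\ell,p,q},\mu^n_{m,\ell,q}\in V_q$. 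Consequently $\Span(\Phi^n_{m,\ell}\cup\Psi^n_{m,\ell})=\bigoplus_q\Span(\Phi^n_{m,\ell,q}\cup\Psi^n_{m,\ell,q})$, and $g\cdot\mu^n_{m,\ell,q}$ depends only on the $V_q$-component of $g$. The whole statement therefore decouples over $q$, and it suffices to prove, inside each $V_q$, the single identity $\Span(\Phi^n_{m,\ell,q}\cup\Psi^n_{m,\ell,q})=\{\mu^n_{m,\ell,q}\}^{\perp}\cap V_q$.

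I would establish this identity by the standard ``containment plus equal dimension'' argument. For containment, I would compute directly that each generator is orthogonal to $\mu^n_{m,\ell,q}$. Since $\phi^n_{m,\ell,p,q}$ places a $+1$ at $(p,2q+\ell 2^m)$ and at $(p,2q+1+\ell 2^m)$, while $\mu^n_{m,\ell,q}$ equals $+1$ on the even row and $-1$ on the odd row throughout the relevant $h$-range (which contains $p$ whenever $p\ge -2q$), one gets $\phi^n_{m,\ell,p,q}\cdot\mu^n_{m,\ell,q}=(+1)(+1)+(+1)(-1)=0$; the same cancellation occurs for $\psi^n_{m,\ell,p,q}$, whose nonzero entries sit at $(p,2q+\ell 2^m)$ and $(p-1,2q+1+\ell 2^m)$, both inside $\supp\mu^n_{m,\ell,q}$. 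Hence $\Span(\Phi^n_{m,\ell,q}\cup\Psi^n_{m,\ell,q})\subseteq\{\mu^n_{m,\ell,q}\}^{\perp}\cap V_q$.

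For the dimensions, a direct count gives $\dim V_q=(2^n+2q)+(2^n+2q+1)=2(2^n+2q)+1$, the two summands being the numbers of admissible $h$ on the even and odd rows. On the other hand $\Phi^n_{m,\ell,q}$ and $\Psi^n_{m,\ell,q}$ together contain $2(2^n+2q)$ elements and are linearly independent by \cref{lem:linindep}, so $\dim\Span(\Phi^n_{m,\ell,q}\cup\Psi^n_{m,\ell,q})=2(2^n+2q)$. Because $\mu^n_{m,\ell,q}$ is a nonzero element of $V_q$, the functional $g\mapsto g\cdot\mu^n_{m,\ell,q}$ is nontrivial on $V_q$, whence $\dim(\{\mu^n_{m,\ell,q}\}^{\perp}\cap V_q)=\dim V_q-1=2(2^n+2q)$. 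A subspace contained in another of the same finite dimension is equal, which yields the identity.

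Assembling the pieces, $g\in\Span(\Phi^n_{m,\ell}\cup\Psi^n_{m,\ell})$ iff each $V_q$-component lies in $\Span(\Phi^n_{m,\ell,q}\cup\Psi^n_{m,\ell,q})$, iff $g\cdot\mu^n_{m,\ell,q}=0$ for every $q\in I_{2^{m-1}}$, which is exactly $g\cdot\mu=0$ for all $\mu\in M^n_{m,\ell}$. Writing out $g\cdot\mu^n_{m,\ell,q}=0$ via the two-row formula for $\mu^n_{m,\ell,q}$ and relabelling $q$ as $t$ gives the scalar identities in \cref{eq:mass}. The only real work is the bookkeeping: pinning down the $h$-ranges on the two rows exactly so that the count $\dim V_q=2(2^n+2q)+1$ is tight and the containment is exact. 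Once these match, the equality of subspaces is automatic, so I expect the index accounting in the dimension step to be the main place where care is needed.
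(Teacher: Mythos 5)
Your proof is correct and takes essentially the same approach as the paper's: both decompose the problem into the row-pairs indexed by $q$, verify by direct computation that every generator in $\Phi^n_{m,\ell,q}\cup\Psi^n_{m,\ell,q}$ is orthogonal to $\mu^n_{m,\ell,q}$, and then close the converse with the count $2\cdot 2^n+4q+1$ degrees of freedom per block versus $2\cdot 2^n+4q$ linearly independent generators (via \cref{lem:linindep}). The only difference is presentational --- the paper completes $\Phi^n_{m,\ell,q}\cup\Psi^n_{m,\ell,q}$ with $\mu^n_{m,\ell,q}$ to a basis of the block, while you compare dimensions of nested subspaces --- which is an immaterial reformulation of the same linear algebra.
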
 
\begin{proof}
The members of $\nmlo{\Phi}$ and $\nmlo{\Psi}$ themselves satisfy this constraint: for $q \in I_{2^{m-1}}$ 
\[
\begin{aligned}
\sum_{h=-2t}^{2^n} \nml{\phi}{\ell,p,q}(h,2t) 
&= 
\sum_{h=-2t-1}^{2^n} \nml{\phi}{\ell,p,q} (h,2t+1),
\\
\sum_{h=-2t}^{2^n} \nml{\psi}{\ell,p,q} (h,2t) 
&= 
\sum_{h=-2t-1}^{2^n} \nml{\psi}{\ell,p,q} (h,2t+1).
\end{aligned}
\]
That is, $\mu \cdot \psi =0$ for all $\mu \in \nml{M}{\ell}$ and
$\psi \in \nml{\Phi}{\ell} \cup \nml{\Psi}{\ell}$. So if $g \in
\Span (\nmlo{\Phi} \cup \nmlo{\Psi})$ then $g \cdot \mu =0$. 

Conversely, $\nmlq{E} = \nml{E}{\ell} \cap (\ZZ
\times \{2q, 2q+1\})$ ($q \in I_{2^{m-1}}$) has $2\cdot 2^n + 4q + 1$ members,
whereas $|\nml{\Phi}{\ell,q}| + |\nml{\Psi}{\ell,q}| = (2^n + 2q) + (2^n + 2q) =
2\cdot2^n + 4q$. 
As noted above, $\nml{\mu}{\ell,p,q} \cdot \psi =0$ for all $\psi \in \nml{\Phi}{\ell,q} \cup \nml{\Psi}{\ell,q}$. Hence $\{\nml{\mu}{\ell,p,q}\} \cup \nml{\Phi}{\ell,q} \cup \nml{\Psi}{\ell,q}$ form a basis for functions in $\cF^n$ supported in $\nml{E}{\ell,q}$. Therefore $g \cdot \nml{\mu}{\ell,p,q} = 0$ implies $g \in \Span (\nml{\Phi}{\ell,q} \cup \nml{\Psi}{\ell,q})$ for all $q \in I_{2^{m-1}}$, implying that $g \in \Span (\nmlo{\Phi} \cup \nmlo{\Psi})$.
\end{proof}

The constraint \cref{eq:mass} is interpreted as a sort of a mass consistency condition. If $g = \nmlo{R}[f]$ for some $f \in \cF^n_0$, then the constraint implies that
\beq
    \sum_{h=-2t}^{2^n-1} \nmlo{R}[f](h,2t)
    =
    \sum_{h=-2t-1}^{2^n-1} \nmlo{R}[f](h,2t+1).
\eeq
This condition is always satisfied for such $g$ since the sum of $f$ over digital lines of different slopes $2t$ and $2t+1$,
\[
    \bigcup_{h=-2t}^{2^n-1} \nmlo{D} \brkt{h}{2t},
    \Aand
    \bigcup_{h=-2t-1}^{2^n-1} \nmlo{D} \brkt{h}{2t+1},
\]
must both equal the total sum  $\sum_{(i,j) \in \ZZ \times I_{2^m}} f^n_{m,\ell}(i,j)$ of all values of $f$.

\subsection{Characterization of $R^n[\cF^n_0]$} 

Now we characterize the range of $\cF^n_0$ under $R^n$. First, note that the range of $S^n_m$ is straightforward to characterize, due to the lemmas from the preceding section.

First, it follows from \cref{lem:linindep} and \cref{lem:mass} that
\beq
    \begin{aligned}
    \cG^{n}_{m,\ell}
    &=
    \Span (
    \oPhi^n_{m,\ell}
    \cup 
    \oPsi^n_{m,\ell}
    \cup 
    \uPhi^n_{m,\ell}
    \cup
    \uPsi^n_{m,\ell}
    \cup
    M^n_{m,\ell}),
    \label{eq:direct-sum}
    \end{aligned}
\eeq
in which $\nmlo{\oPhi}, \nmlo{\oPsi}, \nmlo{\uPhi},\nmlo{\uPsi}$ defined in \cref{eq:huPhiPsi}, and $\nmlo{M}$ in \cref{eq:Mnml}. 

We will separate $\cG^n_{m,\ell}$ in two, by writing  
\beq \label{eq:Gnml-split}
\cG^n_{m,\ell} = \Span X^n_{m,\ell} \oplus \Span Y^n_{m,\ell},
\eeq
where $\oplus$ denotes the direct sum and
\beq
    X^n_{m,\ell} 
    :=
    \oPhi^n_{m,\ell}
    \cup
    \oPsi^n_{m,\ell},
    \qquad
    Y^n_{m,\ell} 
    :=
    \uPhi^n_{m,\ell}
    \cup 
    \uPsi^n_{m,\ell}
    \cup 
    M^n_{m,\ell}.
\eeq
We will also let,
\[
    \nm{X} := \bigcup_{\ell \in I_{2^{n-m}}} \nmlo{X},
    \quad
    \nm{Y} := \bigcup_{\ell \in I_{2^{n-m}}} \nmlo{Y}.
\]
Let $\nml{P}{\ell}$ be a projection to $\Span \nmlo{Y}$, and $\nm{P}$ a projection to $\Span \nm{Y}$.

\begin{lemma} \label{lem:rangeSnm} 
It holds that $\nm{S}[\cG^n_{m-1}] = \Span X^n_{m}$ for $m=1,...,n$.
\end{lemma}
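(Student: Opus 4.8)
The plan is to establish the two inclusions $\nm{S}[\cG^n_{m-1}] \subseteq \Span \nm{X}$ and $\Span \nm{X} \subseteq \nm{S}[\cG^n_{m-1}]$ separately, leaning almost entirely on the Localization Lemma (\cref{lem:localize}) and on the explicit expansion \cref{eq:Fpshi} obtained in its proof. The key structural remark is that distinct strip indices correspond to disjoint row-ranges, so the space $\cG^n_{m-1}$ of images supported in $E^n_{m-1}$ splits as a direct sum $\bigoplus_{\ell''} \nmlz{\cG}{m-1}{\ell''}$ over the level-$(m-1)$ strips, the span $\Span \nm{X}$ splits as $\bigoplus_{\ell} \Span \nmlo{X}$, and $\nm{S}$ respects this splitting because the $(m,\ell)$-section of $\nm{S}[f]$ depends only on the $(m-1,2\ell)$- and $(m-1,2\ell+1)$-sections of $f$. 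Consequently it is enough to argue for a single fixed $\ell \in I_{2^{n-m}}$ and take the direct sum over $\ell$ afterward.

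For the inclusion $\subseteq$, I would take $f \in \cG^n_{m-1}$ and write $f = \sum_{\ell \in I_{2^{n-m}}} (f_{2\ell} + f_{2\ell+1})$, where $f_{2\ell} \in \nmlz{\cG}{m-1}{2\ell}$ and $f_{2\ell+1} \in \nmlz{\cG}{m-1}{2\ell+1}$ are the restrictions of $f$ to the two relevant strips. By linearity $\nm{S}[f] = \sum_\ell \nm{S}[f_{2\ell} + f_{2\ell+1}]$, and \cref{lem:localize} applies to each summand with $g_0 = f_{2\ell}$ and $g_1 = f_{2\ell+1}$, giving $\nm{S}[f_{2\ell}+f_{2\ell+1}] \in \Span(\nmlo{\oPhi} \cup \nmlo{\oPsi}) = \Span \nmlo{X}$. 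Summing over $\ell$ yields $\nm{S}[f] \in \Span \nm{X}$.

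For the reverse inclusion $\supseteq$, I would read \cref{eq:Fpshi} as a parametrization of the image of $\nm{S}$ on a single strip: for arbitrary $g_0 \in \nmlz{\cG}{m-1}{2\ell}$ and $g_1 \in \nmlz{\cG}{m-1}{2\ell+1}$, the output $\nm{S}[g_0+g_1]$ is a linear combination of the $\nmlpqz{\phi}{p}{q}$ over $-q \le p \le 2^n-1$ and of the $\nmlpqz{\psi}{p}{q}$ over $-2q \le p \le 2^n-q-1$, with coefficients equal to the free entries $g_0(p,q)$ and $g_1(p,q)$. These two index ranges are exactly the ones defining $\nmlo{\oPhi}$ and $\nmlo{\oPsi}$, so as $(g_0,g_1)$ ranges over $\nmlz{\cG}{m-1}{2\ell} \times \nmlz{\cG}{m-1}{2\ell+1}$ the outputs exhaust $\Span(\nmlo{\oPhi}\cup\nmlo{\oPsi}) = \Span \nmlo{X}$; given any target in $\Span \nmlo{X}$ one simply reads off the required values of $g_0$ and $g_1$. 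Taking the direct sum over $\ell$ then produces every element of $\Span \nm{X}$, which together with $\subseteq$ gives the claimed equality.

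The main obstacle is bookkeeping rather than conceptual depth: I must verify that the correspondence between the free values of $(g_0,g_1)$ and the coefficients in \cref{eq:Fpshi} is a genuine bijection whose index ranges match $\nmlo{\oPhi}$ and $\nmlo{\oPsi}$ \emph{exactly} — in particular that no members of $\nmlo{\uPhi}$ or $\nmlo{\uPsi}$ are produced and that $g_0,g_1$ are unconstrained on their supports. A dimension count, namely $\dim \nmlz{\cG}{m-1}{2\ell} = |\nmlo{\oPhi}|$ and $\dim \nmlz{\cG}{m-1}{2\ell+1} = |\nmlo{\oPsi}|$ with each side equal to $\sum_{q \in I_{2^{m-1}}} (2^n + q)$, combined with the linear independence of $\nmlo{\oPhi} \cup \nmlo{\oPsi}$ from \cref{lem:linindep}, confirms that this correspondence is one-to-one and onto, closing the argument.
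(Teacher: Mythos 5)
Your proposal is correct and follows essentially the same route as the paper: both inclusions rest on the Localization Lemma (\cref{lem:localize}), with the forward inclusion identical and the reverse inclusion differing only in presentation — the paper invokes the inverse operator $(S^n_m)^{-1}$ mapping $\Span X^n_m$ back into $\cG^n_{m-1}$, while you construct preimages explicitly by reading the expansion \cref{eq:Fpshi} as a parametrization, which is the same fact made constructive. Your added strip-wise direct-sum bookkeeping and the dimension count $\dim \cG^n_{m-1,2\ell} = |\oPhi^n_{m,\ell}| = \sum_{q \in I_{2^{m-1}}}(2^n+q)$ are sound but not a departure from the paper's argument.
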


\begin{proof}
Let $g = \nm{S} [f]$ for some $f \in \cG^n_{m-1}$, then $g \in \Span X^n_{m}$ by \cref{lem:localize}, so $\nm{S} [\cG^n_{m-1}] \subset \Span X^n_{m}$. Conversely, if $g \in \Span X^n_m$ then $(\nm{S})^{-1}[g] \in \cG^n_{m-1}$ by \cref{lem:localize}, so $\nm{S}[\cG^n_{m-1}] \supset \Span X^n_m$.
\end{proof}

We will now characterize the range $R^n[\cF^n_0]$. So far, we know from  \cref{lem:suppRnmF0} that $R^n$ maps $\cF_0^n$ into $\cG^n$ \cref{eq:hOmega}, and have shown in \cref{thm:Fp-Fp} that $R^n : \cF^n_+ \to \cF^n_+$ is a bijection by the virtue of the inversion formula \cref{eq:inv-formula+}. Then we showed that $S^n_m [\cG^n_m] = \Span X^n_m$ in \cref{lem:rangeSnm}. These facts will enable us to characterize $R^n[\cF^n_0] \subset \cG^n$ with a set of linear constraints.

We define the partial inverse $T^n_m: \cF^n_+ \to \cF^n_+$ as given by
\beq
    \left\{
    \begin{aligned}
    \nm{T}
    &:= 
    (\nmz{S}{m+1})^{-1} 
    \circ ... \circ
    (\nmz{S}{n})^{-1}
    \text{ for } m = 2, ..., n-1,
    \\
    T^n_n 
    &:= 
    \Id.
    \end{aligned}
    \right.
\eeq

Now, we will define the space of images formed from  $g \in \cG^n$ \cref{eq:hOmega} for which $T^n_m[g]$ has projection $\nm{P}$ to $\Span Y^n_m$ \cref{eq:Gnml-split} is zero.
\beq
    \cF^n_R
    =
    \left\{
        g \in \cG^n
        \,|\,
        \nm{P} \circ T^n_m [g] = 0, \,
        \, m = 2, ... , n
    \right\}.
    \label{eq:range}
\eeq
We will show that $\cF^n_R$ is the range of $\cF^n_0$ under $R^n$.

\begin{theorem}[Range characterization of $R^n$]
$R^n : \cF^n_0 \to \cF^n_R$ is a bijection.
\end{theorem}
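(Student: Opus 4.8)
The plan is to establish three facts whose conjunction is the theorem: that $R^n$ restricted to $\cF^n_0$ is injective, that it carries $\cF^n_0$ into $\cF^n_R$, and that it is onto $\cF^n_R$. Injectivity is immediate, since $\cF^n_0 \subset \cF^n_+$ and $R^n$ is already a bijection on $\cF^n_+$ by \cref{thm:Fp-Fp}. The engine for the remaining two facts is the factorization $R^n = S^n_n \circ \cdots \circ S^n_1$ of \cref{eq:Rni} together with the matching definition of the partial inverse, which gives the identity $T^n_m[R^n[f]] = R^n_m[f]$ for all $f \in \cF^n_+$; equivalently, writing $f = (R^n)^{-1}[g]$, one has $T^n_m[g] = R^n_m[f]$. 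Thus each constraint $P^n_m \circ T^n_m[g] = 0$ defining $\cF^n_R$ in \cref{eq:range} is exactly the assertion that the intermediate transform $R^n_m[f]$ has no component in $\Span Y^n_m$.

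For the forward inclusion $R^n[\cF^n_0] \subseteq \cF^n_R$, I would fix $f \in \cF^n_0$ and put $g = R^n[f]$. By \cref{lem:suppRnmF0} we have $\supp R^n_m[f] \subseteq E^n_m$, so $g \in \cG^n$ and each $R^n_m[f] \in \cG^n_m$. Since $R^n_m[f] = S^n_m[R^n_{m-1}[f]]$ with $R^n_{m-1}[f] \in \cG^n_{m-1}$, \cref{lem:rangeSnm} gives $R^n_m[f] \in \Span X^n_m$. Because $\cG^n_m = \Span X^n_m \oplus \Span Y^n_m$ by \cref{eq:Gnml-split}, the projection $P^n_m$ onto $\Span Y^n_m$ annihilates $R^n_m[f] = T^n_m[g]$, so $g \in \cF^n_R$.

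For surjectivity, take $g \in \cF^n_R$. As $\cG^n \subseteq \cF^n_+$, \cref{thm:Fp-Fp} furnishes a unique $f = (R^n)^{-1}[g] \in \cF^n_+$, and the task reduces to showing $f \in \cF^n_0$. I would argue by downward induction that $g_m := T^n_m[g] = R^n_m[f]$ lies in $\cG^n_m$: the base $m=n$ is the hypothesis $g \in \cG^n$, and for the step the constraint $P^n_m[g_m] = 0$ forces $g_m \in \Span X^n_m = S^n_m[\cG^n_{m-1}]$ by \cref{lem:rangeSnm}, whence applying $(S^n_m)^{-1}$ in the localization direction of \cref{lem:localize} yields $g_{m-1} = R^n_{m-1}[f] \in \cG^n_{m-1}$. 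Driving this all the way to the bottom level, the constraint there forces $R^n_1[f] = S^n_1[f] \in \Span X^n_1 = S^n_1[\cG^n_0]$, and since $\cG^n_0 = \cF^n_0$ this gives $f \in \cF^n_0$, closing the argument.

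The hard part will be the base level of the surjectivity induction. By \cref{thm:Fp-Fp} the inverse of a single codomain vector grows polynomially of degree $n-1$ and is \emph{infinitely} supported in the $i$-direction, so finiteness of $\supp f$ can never follow from $f \in \cF^n_+$ alone; it must be produced entirely by the linear constraints. The subtle point is that the mass-type vectors $M^n_{m,\ell}$ inside $Y^n_m$ (from \cref{lem:mass}) are precisely what cancel these polynomial tails, so I expect the delicate bookkeeping to be at the final peeling step, where one must confirm that $(S^n_1)^{-1}$ returns an image supported in the square $\cG^n_0=\cF^n_0$ rather than in a merely lower-bounded half-strip — in other words, that the bottom-level projection really does eliminate the surviving constant/polynomial tail. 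Verifying that the constraints of \cref{eq:range}, read down through this last level, suffice to enforce this finiteness is the crux of the whole proof.
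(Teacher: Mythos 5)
Your architecture --- injectivity from \cref{thm:Fp-Fp}, the identity $T^n_m \circ R^n = R^n_m$ on $\cF^n_+$, the forward inclusion $R^n[\cF^n_0] \subseteq \cF^n_R$ via \cref{lem:suppRnmF0} and \cref{lem:rangeSnm}, and surjectivity by downward induction --- is sound, and it is a genuinely different route from the paper's. The paper never runs your induction: it pulls the decomposition $\cG^n_m = \Span X^n_m \oplus \Span Y^n_m$ back through $(R^n)^{-1}$, argues the resulting constraints are linearly independent, and finishes by a dimension count ($N(N-1)/2$ constraints against $\dim \cG^n - \dim \cF^n_0$). Your explicit peeling argument is arguably more informative, and every step of it down to level $m=2$ is correct.

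The gap sits exactly where you placed the crux, and it is genuine: the bottom-level constraint you invoke does not exist. In \cref{eq:range} the constraints run over $m = 2, \ldots, n$ only, so your induction terminates with $g_1 := (S^n_2)^{-1} \circ \cdots \circ (S^n_n)^{-1}[g] \in \cG^n_1 = \Span X^n_1 \oplus \Span Y^n_1$, where $Y^n_1$ consists precisely of the level-one mass vectors $M^n_{1,\ell}$ (recall $\uPhi^n_1 = \uPsi^n_1 = \emptyset$), and nothing in \cref{eq:range} annihilates the $\Span Y^n_1$ component. This cannot be patched from the stated hypotheses, because the claim fails with $\cF^n_R$ as literally defined: for $\mu \in M^n_{1,\ell}$ put $g := S^n_n \circ \cdots \circ S^n_2[\mu]$. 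Then $g \in \cG^n$ by \cref{lem:suppRnmF0}, and every constraint of \cref{eq:range} holds because $T^n_m[g] = S^n_m \circ \cdots \circ S^n_2[\mu] \in S^n_m[\cG^n_{m-1}] = \Span X^n_m$ by \cref{lem:rangeSnm}; yet $(R^n)^{-1}[g] = (S^n_1)^{-1}[\mu] \notin \cF^n_0$, for otherwise $\mu$ would lie in $S^n_1[\cF^n_0] = \Span X^n_1$, impossible since $\mu \neq 0$ is orthogonal to every element of $\Phi^n_1 \cup \Psi^n_1 \supseteq X^n_1$ by \cref{lem:mass}. A count says the same: \cref{eq:range} imposes at most $\sum_{m=2}^{n} N\,2^{m-2} = N(N-2)/2$ conditions, so $\dim \cF^n_R \geq N^2 + N/2 > N^2 = \dim R^n[\cF^n_0]$. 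What is missing are the $N/2$ mass constraints at level $m=1$; note that the paper's own tally $C_M(N) = \tfrac{N}{2}\,n$ silently includes them, so this is an off-by-one inconsistency between \cref{eq:range} and the paper's proof --- one that your direct argument makes visible and the paper's counting argument glosses over. The repair is to define $T^n_1 := (S^n_2)^{-1} \circ \cdots \circ (S^n_n)^{-1}$ and impose $P^n_1 \circ T^n_1[g] = 0$ as well; with that amendment your base case reads ``$g_1 \in \cG^n_1$ and $P^n_1[g_1] = 0$ give $g_1 \in \Span X^n_1 = S^n_1[\cG^n_0]$, hence $f \in \cG^n_0 = \cF^n_0$,'' and your proof closes. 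As written, however, you assert a constraint that the definition does not supply, so the surjectivity argument is incomplete.
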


\begin{proof}
For $f \in \cF^n_0$, $\supp f \subset I_{2^n} \times I_{2^n}$ so $f$ can be identified to a function from $I_{2^n} \times I_{2^n}$ to $\RR$, hence $f$ has $\abs{I_{2^n} \times I_{2^n}} = 2^{2n} = N^2$ degrees of freedom, and $R^n[f] \subset \cG^n$. Any $g \in \cG^n$ is identified to an image from $E^n$ to $\RR$, so the number of degree of
freedoms for $g$ is
\[
    |E^n| = 2^{2n} + 2^{2n-1} - 2^{n-1} 
    = 
    N^2 + \frac{N(N-1)}{2}
    =
    \frac{3}{2} N^2 - \half N.
\]
To show that $R^n: \cF^n_0 \to \cF^n_R$ is a bijection, it suffices to show that the constraints 
\beq
\nm{P} \circ \nm{T} [g] = 0
\quad \text{ for }
m=2, \, ... \, , n,
\eeq
form $N(N-1)/2$ linearly independent constraints.

Now, consider the identity
\beq
    \begin{aligned}
    \inmzS{m} [ \nm{\cG} ]
    &=
    \inmzS{m} [ \Span \nm{X} ] 
    \oplus
    \inmzS{m} [ \Span \nm{Y} ]
    \\ &=
    \cG ^n_{m-1}
    \oplus
     \inmzS{m} [ \Span \nm{Y} ] .
    \end{aligned}
\eeq
Applying this to $\cG^n$,
\beq
    \begin{aligned}
    (R^n)^{-1}[ \cG^n ]
    &=
    \inmzS{1} \circ \cdots \circ \inmzS{n}
    [ \cG^n ]
    \\ &=
    \inmzS{1} \circ \cdots \circ \inmzS{n}
    [ \Span X^n \oplus \Span Y^n ]
    \\ &=
    \inmzS{1} \circ \cdots \circ \inmzS{n-1}
    \left[
    \cG^n_{n-1} 
    \oplus 
    \inmzS{n}[ \Span Y^n]
    \right].
    \end{aligned}
\eeq
Then repeating, 
\beq
    \begin{aligned}
    & \inmzS{1} \circ \cdots \circ \inmzS{n-1}  
    [ \cG^n_{n-1} \oplus \inmzS{n}[\Span Y^n] ]
    \\ &= 
     \inmzS{1} \circ \cdots \circ \inmzS{n-1}  
    [ \Span \nmz{X}{n-1} 
    \oplus 
    \Span \nmz{Y}{n-1} 
    \oplus
    \inmzS{n}[\Span Y^n]]
    \\ &= 
    ( \inmzS{1} \circ \cdots \circ \inmzS{n-2}  )
    \\ &\qquad \qquad
    \left[ 
       \nmz{\cG}{n-2}  
       \oplus
       \inmzS{n-1} [\Span Y^n_{n-1}] 
       \oplus 
       \inmzS{n-1} \circ \inmzS{n} [\Span Y^n] 
    \right]
    \\ &= \cdots
    =
    \cG^n_0
    \oplus
    \left(
        \bigoplus_{m=1}^{n} (\nm{R})^{-1} [ \Span \nm{Y}] 
    \right).
    \end{aligned}
\eeq
Note that since $\nm{R}$ is a bijection from $\ZZ \times I_{2^n}$ to $\ZZ \times I_{2^n}$, for $g \in \cG^n$, we have that $\nmlo{P} \circ \nm{T} [g] = 0$ if and only if $\nm{P} \circ R^n_m \circ (R^n_m)^{-1} \circ \nm{T} [g] = 0$, and writing $\nm{Q} := \nm{P} \circ R^n_m$, $\nmlo{Q}$ is a projection in $\cF^n$ to the subspace
\[
\Span \nm{Z}
\where
\nm{Z}:=(R^n_m)^{-1} [Y^n_m].
\]
Thus, the constraint $\nm{P} \circ \nm{T} [g] = 0$ is equivalent to $\nm{Q} \circ (R^n)^{-1} [g] = 0$. Now, since $\cG^n_0 = \cF^n_0$,
\beq
    (R^n)^{-1}[\cG^n]
    =
    \cF^n_0
    \oplus
    \left(
        \bigoplus_{m=1}^n 
        \Span \nm{Z} 
    \right),
\eeq
and the projection $\nm{Q}$ is linearly independent spaces $\Span \nm{Z}$, and the set of constraints $\nm{P} \circ \nm{T} [g] = 0$ for $m =2, ... , n$ are linearly independent.

We next count the number of constraints. The  total number $\cC(N)$ of
constraints is composed of the mass constraints $\cC_M(N)$ and the
support constraints $\cC_S(N)$, $\cC(N) = \cC_{M}(N) + \cC_{S}(N).$ Then, we
have $N/2$ mass constraints at each application of $S^n_m$ \cref{eq:Sm} for
$m=1, ... , n$ is $C_M(N) = \frac{N}{2} n$. The number of additional constraints
given by the projections $P^n_{m}$ at each $m = 2, ..., n$ is given by 
\beq
    |\uPhi^n_m| + |\uPsi^n_m|
    =
    2^{m-1} \cdot 
    \left( 2^{m-1} - 1 \right)
    \cdot 2^{n-m} 
    = N\frac{2^{m-1}-1}{2}.
\eeq
Note that when $m=1$ there are no constraints, as $\uPhi^n_1,\uPsi^n_1 = \emptyset$: as $|E^n_1| = N^2 + N/2$ for $\ell \in I_{2^{n-1}}$ with $N/2$ extra degrees of freedom, which matches the number of mass constraints for $m=1$.

So the total number of constraints is
\beq
    \begin{aligned}
    C(N) &=
    \frac{N}{2} n
    +
    N
    \left(
    \sum_{m=2}^{n} \frac{2^{m-1}-1}{2} 
    \right)
    \\ &=
    \frac{N}{2} n
    +
    N
    \left( 
    \sum_{m=2}^{n} 2^{m-2}
    \right)
    -
    N
    \left( \sum_{m=2}^{n} \frac{1}{2} \right)
    =
    \frac{N(N-1)}{2}.
    \end{aligned}
\eeq
The number of linearly independent constraints match the redundant number of degrees of freedom, proving the claim.
\end{proof}

\section*{Acknowledgements}
WL was supported by AMS Simons Travel grant. The work of KR is partially
supported by the National Science Foundation through grants DMS-1913309 and
DMS-1937254.  The work of DR was partially supported by the Air Force Center of
Excellence on Multi-Fidelity Modeling of Rocket Combustor Dynamics under Award
Number FA9550-17-1-0195 and AFOSR MURI on multi-information sources of
multi-physics systems under Award Number FA9550-15-1-0038.

\bibliographystyle{../common/siamplain}

\end{document}